\documentclass[11pt]{amsart}
\usepackage{graphicx}
\usepackage{amsfonts}
\usepackage{indentfirst,bm,amsmath}
\usepackage{amsthm,amssymb}
\usepackage{mathrsfs}
\usepackage{hyperref}

\usepackage{subfig}

\newtheorem{theorem}{Theorem}[section]
\newtheorem{corollary}[theorem]{Corollary}
\newtheorem{lemma}[theorem]{Lemma}
\newtheorem{proposition}[theorem]{Proposition}
\newtheorem{definition}[theorem]{Definition}
\newtheorem{remark}[theorem]{Remark}
\newtheorem{example}[theorem]{Example}
\newtheorem{question}[theorem]{Question}

\begin{document}
	\title[]{Tropical Cartan's second main theorem for hyperplanes in general position}
	
	\author[Y. T. Wang \and T. B. Cao]{ Yuting Wang \and Tingbin Cao}
	
	\address[Yuting Wang and Tingbin Cao]{Department of Mathematics, Nanchang University, Nanchang city, Jiangxi 330031, P. R. China}
	\email{ytwang@email.ncu.edu.cn, tbcao@ncu.edu.cn (the corresponding author)}
	
	\thanks{The first author is supported by the National Natural Science Foundation of China (\#12571082), the Jiangxi Natural Science Foundation (\#20232ACB201005) and the Shandong Natural Science Foundation (\#ZR2024MA024)}
	\date{}
	
	\subjclass{Primary 14T10}
	
	\keywords{Tropical hyperplanes; general position; tropical holomorphic cures; tropical Nevanlinna theory}
	
	\begin{abstract}
We prove a tropical analogue of Cartan's second main theorem for holomorphic curves intersecting hyperplanes in general position——a setting that was not fully resolved by previous tropical Nevanlinna theory. Two versions are obtained. The first (Theorem \ref{T4.1.10}) requires subnormal growth and involves the tropical Casorati determinant. The second and main version (Theorem \ref{T4.1.2}) is completely free of growth conditions and exceptional sets; it replaces the Casorati term by the sum of the counting functions of the curve's components, yielding an inequality valid for every $r.$ The proof uses a tropical Cramer theorem, bypassing the logarithmic derivative lemma. This improves upon previous results by Korhonen-Tohge and Cao-Zheng, where the coefficient could be suboptimal even under the general position hypothesis.
We also clarify the relation between different notions of linear independence, and present the first counterexample to the truncated second main theorem in the tropical setting (Example \ref{E5.7}).
\end{abstract}
	
	\maketitle
	\tableofcontents

\section{Introduction and main results}
The main goal in this paper is to establish the theory for tropical holomorphic maps intersecting tropical hyperplanes in general position over the tropical semi-field. Recall that in 1933, H. Cartan \cite{14} established the classical Nevanlinna theory for holomorphic curves over the complex field and proved the second main theorem with hyperplanes in general position, namely,
\begin{eqnarray*}
(q - n - 1)T_f(r)\leq\sum_{j = 1}^q N_f(r, H_j)-N(r, \frac{1}{W(f)})+O(\log^+(rT_f(r)))\end{eqnarray*} holds for all $r$ outside a set with finite logarithmic measure, where the coefficient $q-n-1$ is optimal. Here $N(r, \frac{1}{W(f)})$ is the counting function of the Wronskian determinant of $f.$ For a better error term in Cartan theory, see \cite{Ye95}.\par

The development of Nevanlinna theory over tropical geometry has seen significant progress. In 2009, Halburd and Southall \cite{1} first proposed Nevanlinna's characteristic function, the Poisson-Jenson formula and the first main theorem for tropical meromorphic functions of one real variable, with finite order. In 2011, for modified tropical meromorphic functions with arbitrary real powers in tropical Laurent series, Laine and Tohge \cite{5} proved the Nevanlinna's second main theorem for tropical meromorphic functions with hyper-order strictly less than one.\par

In 2016, Korhonen and Tohge \cite{6} considered the degeneracy of the set of tropical linear combinations of $\{f_0,\dots,f_n\}$ in the sense of Gondran-Minoux \cite{gm1, gm2}, and attempted to obtain a tropical version of Cartan's second main theorem for tropical holomorphic curves into tropical projective space $\mathbb{TP}^{n}$ with hyperorder strictly less than one. In 2025, Cao and Zheng \cite{7} extended Korhonen-Tohge's second main theorem to the case of tropical hypersurfaces, in which the growth condition of tropical holomorphic curves is extended to minimal hypertype (or called subnormal growth). Very recently, Cao and Peng \cite{Cao-peng} successfully extended these results to several tropical variables. To state the main results,  we first recall that a tropical hyperplane consists of all the roots of a tropical linear polynomial in \( \mathbb{TP}^n \) (see \cite[Proposition 3.3]{mik}). \par

\begin{definition} A tropical hyperplane is a subset of the $n$-dimensional tropical projective space $\mathbb{TP}^n$ of the form
\begin{equation}\label{(3)}
H = \left\{x \in \mathbb{TP}^n \left| \, P(x)=\max_{i=0}^{n}(a_i + x_i) \text{ is attained at least twice}\right.\right\}.
\end{equation}
Denote by $a=(a_0, a_1, \dots, a_n)\in\mathbb{TP}^n$ the coefficient vector in terms of the tropical hyperplane $H.$ \end{definition}\par

Let $f_0, \dots, f_n$ be given tropical entire functions. Denote by $Q$  a collection of tropical linear combinations $g=\bigoplus_{k = 0}^n a_{k}\otimes f_{k},$ where the coefficients $a_0, \ldots, a_n\in\mathbb{T}$ and not all identical to $0_{\mathbb{T}}.$ The $\ell(g)$ represents the shortest length of the representation of $g\in Q$ defined by
\[
\ell(g)=\min\left\{j\in\{1,\dots,n + 1\}:g = \bigoplus_{i = 1}^j a_{k_i}\otimes f_{k_i}\right\}
\]
where $a_{k_i}\in\mathbb{R}$ with integers $0\leq k_1<k_2<\dots<k_j\leq n$. If $\ell(g)=n + 1$ for a tropical linear combination $g$ of $f_0,\dots,f_n$, then $g$ is said to be \emph{complete}. Korhonen and Tohge \cite{6} defined the degree of degeneracy of $Q$ by
\[
ddg(Q):=card(\{g\in Q:\ell(g)<n + 1\}).
\] The $ddg(Q)$ means the number of non-complete tropical linear combinations in the set $Q.$ The basic definitions and notation for a tropical holomorphic map into tropical projective spaces such as the characteristic function $T_f(r),$ counting function $N(r, f)$ will be introduced in the next section.\par

\begin{theorem}\label{T1.2} \cite{7,6} Let $q$ and $n$ be positive integers with $q\geq n$. Assume that the tropical holomorphic map $f:\mathbb{R}\to\mathbb{TP}^n$ is tropically linearly nondegenerate (i.e., its image is not contained in any tropical hyperplane). Assume that $H_{j}$ are tropical hyperplanes in $\mathbb{TP}^n$ defined by tropical linear polynomials  $P_j$ ($j = 1,\dots,q$), respectively. If $\lambda = d d g(\{P_{n + 2}\circ f,\dots,P_{q}\circ f\})$ and
$\limsup_{r\to\infty}\frac{\log T_f(r)}{r}=0,$
then
\begin{align*}
(q - n -& 1-\lambda)T_f(r)\\
\leq&\sum_{j = 1}^{q}N\left(r,\frac{1_{\mathbb{T}}}{P_j\circ f}\oslash\right)-N\left(r,\frac{1_{\mathbb{T}}}{C(P_1\circ f,\dots,P_{n + 1}\circ f)}\oslash\right)+o(T_f(r))\\
\leq& \sum_{j = n + 2}^{q}N\left(r,\frac{1_{\mathbb{T}}}{P_j\circ f}\oslash\right)+o(T_f(r)),
\end{align*}
where $r$ approaches infinity outside an exceptional set of zero upper density measure.
\end{theorem}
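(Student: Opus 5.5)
This is the Korhonen--Tohge / Cao--Zheng theorem, quoted here from \cite{7,6}. I would reproduce their argument, which runs in the same pattern as Cartan's classical proof, with the Wronskian replaced by the tropical Casorati determinant
\[
C(g_0,\dots,g_n)=\bigoplus_{\sigma} g_{\sigma(0)}\otimes \overline{g_{\sigma(1)}}\otimes\cdots\otimes \overline{g_{\sigma(n)}}^{[n]},
\]
where $\overline{g}^{[k]}(x)=g(x+k)$.

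\textbf{Step 1 (setup and first main theorem).} Take a reduced representation $f=[f_0:\dots:f_n]$, so $T_f(r)$ is governed by $u=\max_k f_k$. Put $g_j=P_j\circ f$. Since $f$ is tropically nondegenerate, $C(g_1,\dots,g_{n+1})$ is not identically $0_{\mathbb T}$. Because $g_j\le \max_k a_{jk}+u$, Jensen's formula gives $N(r,1_{\mathbb T}\oslash g_j)\le T_f(r)+O(1)$.

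\textbf{Step 2 (pointwise comparison of maxima).} For each $x$, order the values so that $g_{j_1}(x)\le\cdots\le g_{j_q}(x)$. The key combinatorial claim is that among the $q-n-1$ largest indices outside $\{1,\dots,n+1\}$, at most $\lambda$ of the functions can be non-complete. Every complete $g_j$ dominates $u$ up to a constant, since it contains every $f_k$ with a finite coefficient. This should give a pointwise inequality
\[
(q-n-1-\lambda)u(x)+\max_{|I|=n+1} \sum_{i\in I} g_i(x) \le \sum_{j=1}^q g_j(x)+O(1).
\]
The sum over the $n+1$ tropical linear forms $g_1,\dots,g_{n+1}$ is then compared with the Casorati determinant through
\[
\sum_{i=1}^{n+1} g_i\;=\;C(g_1,\dots,g_{n+1}) \;-\;\big(\text{a sum of differences } \overline{g_i}^{[k]}-g_i\big)\;+\;O(1),
\]
using the identity $C(g_1,\dots,g_{n+1})\le \sum_{i=1}^{n+1} g_i+\text{shift terms}$ after expanding the tropical permanent. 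Integrating over $x=\pm r$ in the style of Jensen converts each $g_j$ into $N(r,1_{\mathbb T}\oslash g_j)$ plus a proximity term, and $C$ into its counting function.

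\textbf{Step 3 (error terms).} The shift differences $\overline{g}^{[k]}-g$ contribute proximity functions of the form $m(r,\overline{g}^{[k]}\oslash g)$. Under $\limsup \log T_f(r)/r=0$, the tropical shift lemma of Cao--Zheng gives $o(T_f(r))$ outside a set of zero upper density; in the Korhonen--Tohge version this requires hyperorder $<1$. Collecting the terms gives the first inequality.

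\textbf{Step 4 (second inequality).} This follows from
\[
N\big(r,1_{\mathbb T}\oslash C(g_1,\dots,g_{n+1})\big)\ge \sum_{j=1}^{n+1}N(r,1_{\mathbb T}\oslash g_j)-o(T_f(r)).
\]
I would prove it from the multilinearity of the tropical permanent: each term of $C$ contains each $g_j$ at some shift, and at each roughness point the shifted counting functions agree with the unshifted ones up to $o(T_f(r))$.

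\textbf{Main obstacle.} I expect the hardest step to be the pointwise bookkeeping in Step 2. There one must show that only $\lambda$ non-complete combinations can spoil the lower bound, and that the $n+1$ chosen forms combine correctly into $C$. This is exactly where the suboptimal coefficient $q-n-1-\lambda$ enters, and where tropical cancellation is absent.
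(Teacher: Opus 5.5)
\textbf{Gap in Step 2.} The paper does not prove this statement; it is quoted from Korhonen--Tohge and Cao--Zheng, and the nearest argument in the paper is the proof of Theorem~\ref{T4.1.7}. So your proposal has to stand on its own, and Step~2 does not. The pointwise inequality
\[
(q-n-1-\lambda)u+\max_{|I|=n+1}\sum_{i\in I}g_i\le\sum_{j=1}^q g_j+O(1)
\]
is false. Take $n=1$, $q=3$, $P_1=x_0$, $P_2=x_1$, $P_3=x_0\oplus x_1$, and the nondegenerate curve $f=[0:-x]$. Then $g_3=\max\{0,-x\}=u$ is complete, so $\lambda=0$, and the maximum over pairs is $2u$. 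Your inequality becomes $3u\le f_0+f_1+u+O(1)$, i.e.\ $|f_1-f_0|=|x|=O(1)$, which fails.

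The ordering-and-maximum device is the product-to-sum trick for hyperplanes in general position (cf.\ Lemma~\ref{L3.7}). It plays no role for arbitrary hyperplanes. Moreover, a subset $I$ that changes with $x$ could never be converted into the fixed determinant $C(g_1,\dots,g_{n+1})$. Your ``key combinatorial claim'' is also vacuous: by definition only $\lambda$ of $g_{n+2},\dots,g_q$ are non-complete.

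\textbf{What is needed instead.} Apply Jensen's formula first and treat each $g_j$, $j\ge n+2$, separately. Since all $g_j$ and $C=C(g_1,\dots,g_{n+1})$ are tropical entire,
\[
\sum_{j=1}^q N(r,1_{\mathbb T}\oslash g_j)-N(r,1_{\mathbb T}\oslash C)=\sum_{j=n+2}^q N(r,1_{\mathbb T}\oslash g_j)-\frac12\sum_{\sigma=\pm1}\Big(C\oslash\bigotimes_{i=1}^{n+1}g_i\Big)(\sigma r)+O(1).
\]
\begin{itemize}
\item A complete $g_j$ satisfies $g_j\ge u+\min_k a_{jk}$ pointwise, so $N(r,1_{\mathbb T}\oslash g_j)\ge T_f(r)+O(1)$.
\item A non-complete $g_j$ (e.g.\ $g_j=f_0$) need not satisfy any bound $g_j\ge u-O(1)$. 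The only thing you use for these $\lambda$ functions is the integrated bound $N(r,1_{\mathbb T}\oslash g_j)\ge 0$, which is convexity of $g_j$. This is exactly where $\lambda$ is lost.
\item The last term is at most $m(r,C\oslash\bigotimes g_i)\le\sum_{i,k}m(r,\overline{g_i}^{[k]}\oslash g_i)$.
\end{itemize}

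\textbf{Step 3.} The shift lemma gives $o(T(r,g_i))$, not $o(T_f(r))$. You still have to compare $T(r,g_i)$ with $T_f(r)$ and check the growth hypothesis for the functions you apply it to. One way is to factor out $f_0$ as in the proof of Theorem~\ref{T4.1.7} and use Lemmas~\ref{L2.6} and~\ref{L2.8}.

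\textbf{Step 4.} This is essentially right, but the mechanism is not multilinearity. $C$ dominates any single permutation term, so Jensen gives $N(r,1_{\mathbb T}\oslash C)\ge\sum_i N(r,1_{\mathbb T}\oslash\overline{g_i}^{[k_i]})+O(1)$. Lemma~\ref{L2.8} then lets you replace the shifted counting functions by unshifted ones at a cost of $o(T_f(r))$, outside a set of zero upper density.
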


However, Theorem \ref{T1.2} involve a degeneracy parameter $\lambda$ that can reduce the coefficient even when the tropical hyperplanes are in general position (Definition \ref{D1.3} below). The following example illustrates this phenomenon. \par

\begin{example}Set the tropical hyperplanes \(H_1, \dots, H_4\) in \(\mathbb{TP}^2\) defined by tropical linear polynomials $P_j$ ($j = 1,\dots, 4$),  respectively, with their coefficient vectors are
\[
a_1=(1,0_{\mathbb{T}},0_{\mathbb{T}}),\quad a_2=(0_{\mathbb{T}},1,1),\quad a_3=(0_{\mathbb{T}},0_{\mathbb{T}},1),\quad a_4=(1,1,0_{\mathbb{T}}),
\] respectively. For any tropically linearly nondegenerate holomorphic curve  \(f = [f_0:f_1:f_2]:\mathbb{R}\to\mathbb{TP}^2\) satisfying the subnormal growth condition, we have $\lambda=ddg(\{P_{4}\circ f\})=1\neq 0.$  Then it follows from Theorem \ref{T1.2} that we have a trivial estimate
\begin{equation*}
\begin{split}
0&=(4-2-1-1)T_{f}(r)\\&\leq\sum_{j = 1}^{4}N\left(r,\frac{1_{\mathbb{T}}}{P_j\circ f}\oslash\right)-N\left(r,\frac{1_{\mathbb{T}}}{C(P_1\circ f,\dots,P_{3}\circ f)}\oslash\right)+o(T_f(r))\\
&\leq N(r, \frac{1_{\mathbb{T}}}{P_4\circ f} \oslash)+o(T_f(r)).
\end{split} 
\end{equation*} Thus the optimal constant $q-n-1=1$ is not attained despite the tropical hyperplanes being in general position. \end{example}
\par

\begin{definition}\label{D1.3}
Given tropical hyperplanes $H_1, \dots, H_q$ in $\mathbb{TP}^{n}.$ We say that $H_1,$ $\dots,$ $H_q$ are in general position if for any injective map $\mu : \{0, 1, \dots, n\} \to \{1 \dots, q\}$, one has $$H_{\mu(0)}\cap \dots\cap H_{\mu(n)}=\varnothing.$$
\end{definition}

This raises two natural questions.

\begin{question}\label{Q1}
Can one obtain a second main theorem for tropical hyperplanes in general position with the optimal coefficient $q-n-1$, independent of any degeneracy parameter?
\end{question}

\begin{question}\label{Q2}
Is it possible to eliminate the subnormal growth condition  $\limsup_{r\to\infty}\frac{\log T_f(r)}{r}=0$ and the accompanying exceptional set entirely?
\end{question}

In this paper we answer both questions affirmatively. For convenience, we denote by $C(f):=C(f_0,\dots,f_n)$ the tropical Casorati determinant (see Section 2).\par

Our first result (Theorem \ref{T4.1.10}) improves Theorem \ref{T1.2} by replacing the Casorati of the pulled‑back hyperplanes with the simpler Casorati of $f$ itself, and it achieves the optimal coefficient $q-n-1$ under the general position hypothesis. Although Theorem \ref{T4.1.10}) closes the structure of the classical Cartan theorem: it involves the tropical Casorati determinant $C(f)$ as the ramification term and and holds only outside an exceptional set of zero upper density, it still requires the subnormal growth condition $\limsup_{r\to\infty}\frac{\log T_f(r)}{r}=0.$ Note that (see Lemma \ref{C2.13}) the tropical hyperplanes $H_1, \dots, H_q$  are in general position, if and only if, for any injective map $\mu : \{0, 1, \dots, n\} \to \{1 \dots, q\}$,  the corresponding coefficient vectors $a_{\mu(0)}, \dots, a_{\mu(n)}$ are tropically linearly independent (which is different from the  Gondran-Minoux sense, see the details in the third section). Thanks to Lemma \ref{C2.13}, the general position condition of tropical hyperplanes is equivalent to the tropical linear independence of their coefficient vectors. This equivalence allows us to apply the tropical Cramer theorem (Theorem \ref{The2.20}) and the tropical determinant inequality (Theorem \ref{T3.12}) to derive the desired second main theorems. Actually,  we obtain a general form of the second main theorem with arbitrary tropical hyperplanes (Theorem \ref{T4.1.7}) to get the following result. \par

\begin{theorem}\label{T4.1.10} Assume that the tropical holomorphic map \(f = [f_0:\dots:f_n]:\mathbb{R}\to\mathbb{TP}^n\) is tropically linearly nondegenerate. Let \(H_1,\dots,H_q\) be tropical hyperplanes in \(\mathbb{TP}^n\) in general position defined by tropical linear polynomials $P_j$ ($j = 1,\dots,q$), respectively. If $\limsup_{r \to \infty} \frac{\log T_f(r)}{r}=0,$
then
\begin{equation}\label{eg1.6}
\begin{split}
(q - n - 1)T_{f}(r)\leq\sum_{j = 1}^{q} N(r, \frac{1_{\mathbb{T}}}{P_j\circ f}\oslash) -N(r, \frac{1_{\mathbb{T}}}{C(f)}\oslash)+ o(T_f(r)),
\end{split}
\end{equation}
where $r$ tends to infinity outside of a set of zero upper density measure.
\end{theorem}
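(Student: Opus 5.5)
I would follow the classical Cartan scheme, replacing the Wronskian by the tropical Casorati determinant $C(f)=C(f_0,\dots,f_n)$. The logarithmic derivative lemma is replaced by the tropical lemma on shifts, which is where the subnormal growth hypothesis enters. Fix $r$ and a point $x$. Since the $H_j$ are in general position, Lemma \ref{C2.13} shows that for every injective $\mu:\{0,\dots,n\}\to\{1,\dots,q\}$ the coefficient vectors $a_{\mu(0)},\dots,a_{\mu(n)}$ are tropically linearly independent. Hence the tropical Cramer theorem (Theorem \ref{The2.20}) lets us recover each $f_i(x)$ from the values $P_{\mu(k)}\circ f(x)$, $k=0,\dots,n$, up to a constant depending only on the coefficients. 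This gives
\[
\max_i f_i(x)\le \max_{k} P_{\mu(k)}\circ f(x)+c_\mu .
\]
It also gives the reverse direction, $P_j\circ f\le \max_i f_i+\max_i a_{j,i}$.

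Next I would order the values $P_1\circ f(x),\dots,P_q\circ f(x)$ increasingly. Let $\mu$ pick the indices of the $n+1$ smallest. Applying the Cramer bound to these $n+1$ indices and using the reverse inequality for the remaining ones should yield a pointwise estimate
\[
(q-n-1)\max_i f_i(x)\le \sum_{j=1}^q P_j\circ f(x)-\sum_{k=0}^n P_{\mu(k)}\circ f(x)+O(1).
\]
The remaining task is to compare $\sum_k P_{\mu(k)}\circ f(x)$ with $C(f)(x)$. Multilinearity of the tropical Casorati determinant and the tropical determinant inequality (Theorem \ref{T3.12}) give
\[
C(P_{\mu(0)}\circ f,\dots,P_{\mu(n)}\circ f)\le C(f)+O(1),
\]
with $O(1)$ controlled by the tropical determinant of the coefficient matrix. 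In the other direction, $C(P_{\mu(0)}\circ f,\dots)$ exceeds $\sum_k P_{\mu(k)}\circ f$ by at most a maximum over permutations of the shift differences $P_{\mu(k)}\circ f(x+\sigma(k)) - P_{\mu(k)}\circ f(x)$. Combining these, $\sum_k P_{\mu(k)}\circ f(x)\ge C(f)(x)-\sum_{j,\ell}(P_j\circ f(x+\ell)-P_j\circ f(x))^+ -O(1)$. Since there are finitely many $\mu$, I would take the maximum over $\mu$ to get an inequality valid for all $x$.

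Finally I would evaluate at $x=\pm r$ and average, as in the tropical Poisson–Jensen/Jensen formula, following the approach of Theorem \ref{T4.1.7}. The left side becomes $(q-n-1)T_f(r)$. Each $P_j\circ f$ contributes $N(r,1_{\mathbb T}/P_j\circ f\oslash)$ plus a term bounded by $T_f(r)+O(1)$, and these combine because $P_j\circ f$ is tropical entire. The Casorati term contributes $-N(r,1_{\mathbb T}/C(f)\oslash)$ together with terms of the type $T_f(r)$, coming from the first main theorem for $C(f)$, which is built from shifts of $f$. The shift differences are $m$-type proximity terms of $P_j\circ f(x+\ell)\oslash P_j\circ f(x)$. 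The tropical lemma on shifts, valid under $\limsup \log T_f(r)/r=0$, makes these $o(T_f(r))$ outside a set of zero upper density.

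I expect the main obstacle to be this Casorati comparison, because tropical determinants are not multiplicative. I would first try Theorem \ref{T3.12} to show $C(Af)\le |A|_{\rm trop}+C(f)$, where $|A|_{\rm trop}$ is the tropical determinant of the coefficient matrix. I would also check that the bookkeeping of the $T_f$-terms cancels exactly, so that the coefficient stays $q-n-1$; this is where general position, through Lemma \ref{C2.13}, is essential.
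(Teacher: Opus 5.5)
Your plan has the same architecture as the paper's proof (Theorem~\ref{T4.1.7}, then Lemma~\ref{L3.7}, then Jensen). General position gives the pointwise product-to-sum estimate via Lemma~\ref{C2.13} and Theorem~\ref{The2.20}. Theorem~\ref{T3.12} compares the Casoratians, and the shift lemma absorbs the error terms. Working pointwise instead of through Weil functions is only bookkeeping.

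\textbf{The gap: the Casorati comparison is reversed.} You assert $C(P_{\mu(0)}\circ f,\dots,P_{\mu(n)}\circ f)\le C(f)+O(1)$ and propose to obtain $C(Af)\le |A|+C(f)$ from Theorem~\ref{T3.12}. That theorem says the opposite, $|A|\otimes|B|\le|A\otimes B|$, and your inequality is false in general. Take $n=1$, $P_1=x_0\oplus x_1$, $P_2=x_0$, so $|A|=0$, attained once. Take $f=[\max\{0,x\}:0]$ and $c>0$. Then $P_1\circ f=P_2\circ f=f_0$, so $C(P_1\circ f,P_2\circ f)-C(f)=(2x+c)-(x+c)=x$ for $x>0$.

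Your ``combining'' step also does not follow as written. Two upper bounds for $C(P_{\mu(0)}\circ f,\dots)$ give no lower bound for $\sum_k P_{\mu(k)}\circ f$ in terms of $C(f)$.

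The direction you need is exactly what Theorem~\ref{T3.12} provides. The Casorati matrix of $(P_{\mu(k)}\circ f)_k$ is $A_\mu\otimes B$ with $B=(f_i(x+mc))_{i,m}$, so
\[
C(f)(x)+|A_\mu|\le C(P_{\mu(0)}\circ f,\dots,P_{\mu(n)}\circ f)(x)\le\sum_{k=0}^{n}P_{\mu(k)}\circ f(x)+\sum_{j=1}^{q}\sum_{\ell=1}^{n}\bigl(P_j\circ f(x+\ell c)-P_j\circ f(x)\bigr)^+ ,
\]
with $|A_\mu|>-\infty$ by general position. This is the inequality $C(f)\le C(P_{\mu(0)}\circ f,\dots,P_{\mu(n)}\circ f)+C$ used in the proof of Theorem~\ref{T4.1.7}. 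Non-multiplicativity only fails in the harmless direction, so the step you flagged as the main obstacle is not one.

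\textbf{Smaller points.}

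First, Lemma~\ref{T2.7} applied to $g=P_j\circ f$ gives $o(T(r,P_j\circ f))$, not $o(T_f(r))$. You must add two facts:
\begin{itemize}
\item $T(r,P_j\circ f)=m(r,P_j\circ f)\le T_f(r)+O(r)$, by convexity of $\|f\|$;
\item $T_f(r)\ge cr$ for large $r$, by Lemma~\ref{L2.6} applied to the nonconstant $f_1\oslash f_0$.
\end{itemize}
Together these also supply the growth hypothesis for $g$. The paper avoids this by applying the lemma to $P_j\circ f\oslash f_0$, whose characteristic is at most $T_f(r)+O(1)$. It then treats the shifts of $f_0$ separately via Jensen and Lemma~\ref{L2.8}.

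Second, there are no extra $T_f$-terms to cancel. Since $P_j\circ f$ and $C(f)$ are tropical entire, Jensen gives exactly $\frac12\sum_{\sigma=\pm1}P_j\circ f(\sigma r)=N(r,\frac{1_{\mathbb T}}{P_j\circ f}\oslash)+O(1)$, and likewise for $C(f)$. No first main theorem for $C(f)$ enters.

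Third, in the product-to-sum step, what you use for $j\notin\mu$ is the ordering $P_j\circ f(x)\ge\max_k P_{\mu(k)}\circ f(x)$. The reverse bound $P_j\circ f\le\|f\|+O(1)$ plays no role there.
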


We give an example to show that the growth condition in Theorem \ref{T4.1.10} can not be removed.\par

\begin{example}\label{ex1} Let \(f(x)=(0, e_2(x)): \mathbb{R}\rightarrow\mathbb{TP}^1,\) where $$e_2(x)=2^{[x]}(x-[x])+\sum^{[x]-1}_{j=-\infty}2^j=2^{[x]}(x-[x]+1),$$ and let $P_1(x_0, x_1):=x_0, P_2(x_0, x_1):=x_1, P_3(x_0, x_1):=x_0\oplus x_1.$ Clearly, the three tropical hyperplanes from the $P_1, P_2$ and $P_3$ are located in general position in $\mathbb{TP}^1.$ By \cite[proposition 1.24]{Risto15}, we know that $T_f(r)=T(r, e_2(x))+O(1)=\frac{1}{2}(r-[r]+1+o(1))2^{[r]}+O(1).$ Clearly, $\limsup_{r \to \infty} \frac{\log T_f(r)}{r}\neq 0.$ However, since
$$C(f)=e_2(x+1)=2e_2(x),$$ we have $$\sum_{j = 1}^{3}N(r, \frac{1_{\mathbb{T}}}{P_j\circ f}\oslash) -N(r, \frac{1_{\mathbb{T}}}{C(f)}\oslash)\equiv0.$$ Hence \eqref{eg1.6} does not happen. This implies that we cannot drop growth condition in Theorem \ref{T4.1.10}.\end{example}\par

Second, and more importantly, we completely affirm both Question \ref{Q1} and Question \ref{Q2} by establishing a second main theorem that requires neither growth conditions nor exceptional sets (Theorem \ref{T4.1.2}). In this version, the Casorati term is replaced by the sum of the counting functions of the components $f_0,\dots,f_n.$  The proof avoids the logarithmic derivative lemma entirely and instead uses a tropical Cramer theorem (Theorem \ref{The2.20}) to obtain pointwise linear estimates. To our knowledge, this is the first result in tropical Nevanlinna theory that is completely free of analytic growth restrictions.\par

\begin{theorem}\label{T4.1.2}
Assume that the tropical holomorphic map \(f = [f_0 : \dots : f_n]: \mathbb{R}\to\mathbb{TP}^n\) is tropically linearly nondegenerate. If \(H_1,\dots,H_q\) are tropical hyperplanes in \(\mathbb{TP}^n\) in general position defined by tropical linear polynomials $P_j$ ($j = 1,\dots,q$), respectively, then
\[
(q - n - 1)T_{f}(r)\leq\sum_{j = 1}^{q} N(r, \frac{1_{\mathbb{T}}}{P_j\circ f}\oslash) -\sum_{j = 0}^{n}N(r, 1_{\mathbb{T}}\oslash f_j)+ O(1).
\]
\end{theorem}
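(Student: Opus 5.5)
The plan is to reduce Theorem \ref{T4.1.2} to a \emph{pointwise} inequality between tropical entire functions and then integrate it using the tropical Jensen formula. Nothing analytic (no logarithmic derivative lemma, no growth condition) enters, which is consistent with the absence of an exceptional set.

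\emph{Reduction to a pointwise estimate.} Write $u(x)=\max_{0\le i\le n} f_i(x)$, so that $T_f(r)=\frac12(u(r)+u(-r))-u(0)$. Each $P_j\circ f$ is tropical entire, and it is not identically $0_{\mathbb T}$ by nondegeneracy. Each $f_i$ is tropical entire as well. For any tropical entire $g$, the Jensen formula gives
\[N(r,1_{\mathbb T}\oslash g)=\tfrac12\bigl(g(r)+g(-r)\bigr)-g(0),\]
and the same expression gives $N(r,\frac{1_{\mathbb T}}{g}\oslash)$. Set $L(h)(r):=\frac12(h(r)+h(-r))-h(0)$, which is linear in $h$. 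Suppose two functions satisfy $A\le B+C$ pointwise. Then $L(A)(r)\le L(B)(r)+C+B(0)-A(0)$, so the inequality passes to $L$ up to an $O(1)$ term. It therefore suffices to prove that, for every $x\in\mathbb R$,
\[(q-n-1)\,u(x)+\sum_{i=0}^n f_i(x)\le \sum_{j=1}^q P_j(f(x))+C,\]
where $C$ depends only on the coefficients $a_j$ of the hyperplanes.

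\emph{Two algebraic facts for a fixed point $y=f(x)\in\mathbb R^{n+1}$.} Fix any $(n+1)$-subset $S\subset\{1,\dots,q\}$. By general position and Lemma \ref{C2.13}, the $(n+1)\times(n+1)$ coefficient matrix $A_S=(a_{j,i})_{j\in S}$ is tropically nonsingular. In particular its tropical determinant is attained by some permutation $\sigma$ with all entries $a_{j,\sigma(j)}$ finite.

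(i) \emph{Determinant bound} (cf.\ Theorem \ref{T3.12}). For every $j\in S$ we have $P_j(y)\ge a_{j,\sigma(j)}+y_{\sigma(j)}$. Summing over $j\in S$ gives
\[\sum_{j\in S}P_j(y)\ge \operatorname{tdet}(A_S)+\sum_{i=0}^n y_i.\]

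(ii) \emph{Cramer-type bound} (cf.\ Theorem \ref{The2.20}). Every column $i$ of $A_S$ contains a finite entry. Hence $\max_{j\in S}P_j(y)\ge y_i+\min_{\text{finite}}a_{j,i}$ for each $i$, which gives $\max_{j\in S}P_j(y)\ge u(y)-c$ with $c$ uniform in $S$. Consequently, at each $x$ at most $n$ indices $j$ can satisfy $P_j(f(x))<u(x)-c$.

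\emph{Combining.} At a given $x$, order the values $P_j(f(x))$ increasingly and let $S$ be the set of indices of the $n+1$ smallest. By (ii), the $(n+1)$-st smallest value is already at least $u-c$, so each of the remaining $q-n-1$ indices satisfies $P_j\ge u-c$. Applying (i) to $S$ then gives
\[\sum_{j=1}^q P_j(f(x))\ge \sum_i f_i(x)+\min_S\operatorname{tdet}(A_S)+(q-n-1)(u(x)-c).\]
This is exactly the pointwise inequality, since there are only finitely many subsets $S$ and so the minimum is a finite constant. Applying $L$ then finishes the proof.

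\emph{Expected obstacle.} The main point to verify carefully is step (ii) together with the identification of the paper's notion of general position with tropical nonsingularity of every $A_S$ (Lemma \ref{C2.13}). In particular, I must confirm that nonsingularity guarantees a permutation with all entries finite, so that $\operatorname{tdet}(A_S)\neq -\infty$. I also need to check the sign and normalization conventions of $N(r,1_{\mathbb T}\oslash f_j)$, namely that it counts the roots of $f_j$ and not the roots of $-f_j$.
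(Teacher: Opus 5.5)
Your proof is correct and is essentially the paper's own argument: step (i) is the paper's bound $P_{\mu(j)}\circ f\ge a_{\mu(j),\delta(j)}\otimes f_{\delta(j)}$ summed over the unique maximizing permutation (leading to \eqref{eq23}), step (ii) together with ordering the values $P_j(f(x))$ is its product-to-sum claim \eqref{eq25}, and applying the tropical Jensen formula directly to your pointwise inequality simply collapses the paper's route through Weil functions and the first main theorem into one step. The two points you flag cause no difficulty: for $n\ge 1$ a nonsingular $A_S$ cannot have $|A_S|=0_{\mathbb T}$, since otherwise all $(n+1)!\ge 2$ terms would attain the maximum; and for tropical entire $g$ the Jensen formula gives $N(r,1_{\mathbb T}\oslash g)=\frac12(g(r)+g(-r))-g(0)$, which counts the roots of $g$.
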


We give another example to show that the equality in Theorem \ref{T4.1.2} can hold.\par

\begin{example}\label{ex2} Let \(f(x)=(f_0, f_1): \mathbb{R}\to\mathbb{TP}^{1}\) be a non-constant tropical holomorphic map, and let $P_1, P_2, P_3$ be given in Example \ref{ex1}. $$\sum_{j = 1}^{3}N(r, \frac{1_{\mathbb{T}}}{P_j\circ f}\oslash) -\sum_{j=0}^{1}N(r, \frac{1_{\mathbb{T}}}{f_j}\oslash)=N(r, \frac{1_{\mathbb{T}}}{P_3\circ f}\oslash)\leq T_{f}(r)+O(1).$$ Hence the inequality in Theorem \ref{T4.1.2}  becomes an equality $T_f(r)=N(r, \frac{1_{\mathbb{T}}}{P_3\circ f}\oslash)+O(1).$ This implies that conclusion in Theorem \ref{T4.1.2} is sharp.\end{example}\par

As a consequence, we obtain an identity for complete tropical hyperplanes (all coefficients real), which generalizes a one‑dimensional result of Halonen, Korhonen and Filipuk \cite[Corollary 3.7]{8}. Tropical linear combination $P_3\circ f$ in Example \ref{ex2} is complete, and satisfies this equality.\par

\begin{theorem}\label{Tt3.12}
 Assume that the tropical holomorphic map $f=[f_0, f_1, \dots, f_n]: \mathbb{R}\to\mathbb{TP}^n$ is tropically linearly nondegenerate. If $H$ is a tropical hyperplane in \( \mathbb{TP}^n \) defined by the tropical polynomial
\begin{eqnarray*} P(x):=(a_0 \otimes x_0)\oplus \dots \oplus ( a_n \otimes x_n)
\end{eqnarray*} such that all $a_0, \dots, a_n \in\mathbb{R},$ then
\[
T_f(r) = N\left( r, \frac{1_{\mathbb{T}}}{P\circ f} \oslash \right) + O(1).
\]
\end{theorem}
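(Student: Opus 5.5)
The plan is to compare the tropical entire function $g:=P\circ f=\max_{0\le i\le n}(a_i+f_i)$ directly with the function $u_f:=\max_{0\le i\le n} f_i$ that defines the characteristic function $T_f(r)$. We then pass from this comparison to counting functions through the tropical Jensen (Poisson--Jensen) formula. No second main theorem machinery, such as Theorem \ref{T4.1.2}, the Cramer theorem or the Casorati determinant, should be needed. The completeness hypothesis $a_0,\dots,a_n\in\mathbb{R}$ is exactly what makes the comparison uniform.

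\emph{Step 1 (pointwise comparison).} Put $A:=\max_i a_i$ and $B:=\min_i a_i$; both are finite real numbers because every $a_i\in\mathbb{R}$. For every $x\in\mathbb{R}$ we have
\[
B+u_f(x)\le g(x)=\max_i\,(a_i+f_i(x))\le A+u_f(x).
\]
Hence $|g(x)-u_f(x)|\le \max_i|a_i|$ uniformly in $x$. This is the only place the hypothesis is used. If some $a_i$ were $0_{\mathbb{T}}$, the lower bound would fail and the corresponding component could be lost.

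\emph{Step 2 (Jensen formula for $g$).} Since $g$ is a finite maximum of tropical entire functions, it is tropical entire (piecewise linear and convex, with no poles). Hence $N(r,g)=0$. The tropical Jensen formula of Halburd--Southall, as recalled in Section 2, then gives
\[
N\Big(r,\frac{1_{\mathbb{T}}}{g}\oslash\Big)=\frac12\big(g(r)+g(-r)\big)-g(0).
\]
The Jensen formula holds in the form written above only if $g$ is not identically $0_{\mathbb{T}}$; this is guaranteed because the $f_i$ are real-valued and $a_i\in\mathbb{R}$. Nondegeneracy of $f$ is not really needed beyond ensuring $f$ is a genuine map into $\mathbb{TP}^n$.

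\emph{Step 3 (comparison with $T_f$).} By the definition of the tropical Cartan characteristic in Section 2, $T_f(r)=\frac12\big(u_f(r)+u_f(-r)\big)-u_f(0)$, possibly up to an additive constant depending on the normalization used there. Substituting the bound of Step 1 into the right-hand side of Step 2 gives
\[
\Big|N\Big(r,\frac{1_{\mathbb{T}}}{g}\oslash\Big)-T_f(r)\Big|\le 2\max_i|a_i|+O(1).
\]
This holds for every $r>0$, with no exceptional set and no growth condition, which yields the claim.

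The main point requiring care is Step 3. There I must check that $T_f$ is indeed defined through $\max_i f_i$, and that it is independent of the chosen representative of $f$ up to $O(1)$. Shifting all $f_i$ by a common tropical linear (affine) function changes $u_f$ and $g$ by the same affine function, and the symmetric combination $\frac12(h(r)+h(-r))-h(0)$ annihilates affine $h$. So both sides change identically, and the identity is representation-independent. If the paper's Jensen formula carries an extra term, I would verify it is bounded independently of $r$; for an entire $g$ it should vanish.
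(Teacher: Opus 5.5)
Your proof is correct, and it takes a genuinely different, more direct route than the paper.

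\textbf{The paper's route.} The paper obtains the identity as a corollary of its second main theorem. It adjoins the coordinate hyperplanes $x_0,\dots,x_n$ to $H$. It then checks, via the tropical determinants $|A_i|=a_{i-1}\in\mathbb{R}$ and Lemmas \ref{L3.4} and \ref{C2.13}, that these $n+2$ hyperplanes are in general position. Applying Theorem \ref{T4.1.2} with $q=n+2$, the relation $P_j\circ f=f_{j-1}$ for $j\le n+1$ makes the counting functions of the coordinate hyperplanes cancel exactly against $\sum_j N(r,1_{\mathbb{T}}\oslash f_j)$. This leaves $T_f(r)\le N(r,\frac{1_{\mathbb{T}}}{P\circ f}\oslash)+O(1)$. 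The reverse inequality comes from the first main theorem (Theorem \ref{T4.1.6}), i.e.\ from $\lambda_H\ge 0$.

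\textbf{How your argument relates.} Your upper bound $g\le A+u_f$ is precisely that first-main-theorem step. Your lower bound $g\ge B+u_f$ replaces the whole second-main-theorem machinery (general position, the Cramer-type estimates, the bound \eqref{eq22}) with a one-line pointwise inequality. In fact \eqref{eq22} is the same kind of estimate. You apply it to all components simultaneously, which is possible exactly because every $a_i$ is real.

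\textbf{What your route buys.} It uses only the tropical Jensen formula. It gives the explicit constant $\bigl|N(r,\frac{1_{\mathbb{T}}}{P\circ f}\oslash)-T_f(r)\bigr|\le \max_i a_i-\min_i a_i$, valid for every $r$. It also shows that the nondegeneracy hypothesis is superfluous for this statement; the paper needs it only because Theorem \ref{T4.1.2} assumes it. What the paper's route buys is conceptual rather than logical: it exhibits the identity as an equality case of Theorem \ref{T4.1.2}.

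\textbf{A small wording point.} Your justification that $g\not\equiv 0_{\mathbb{T}}$ is better stated as $g\ge B+u_f>-\infty$. A priori a component $f_i$ could be $0_{\mathbb{T}}$, although nondegeneracy rules that out.
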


For a tropical holomorphic curve \(f:\mathbb{R}\to\mathbb{TP}^n\) intersecting a tropical hyperplane \(H\) given by a tropical linear polynomial $P$, its defect is defined by
\[
\delta_f(H):=\liminf_{r\to\infty}\frac{m_f(r,H)}{T_f(r)}=1 - \limsup_{r\to\infty}\frac{N(r,\frac{1_{\mathbb{T}}}{P\circ f}\oslash)}{T_f(r)}.
\]
Thus by Theorem \ref{T4.1.2} and Theorem \ref{Tt3.12}, we immediately have the following defect relations.\par

\begin{theorem}\label{T4.13}[Defect relation]
Assume that the tropical holomorphic map \(f = [f_0:\dots:f_n]:\mathbb{R}\to\mathbb{TP}^n\) is tropically linearly nondegenerate. Let \(H_1,\dots,H_q\) be tropical hyperplanes in \(\mathbb{TP}^n\) in general position defined by tropical linear polynomials $P_j$ ($j = 1, \dots, q$), respectively.
Then
$$\sum_{j=1}^q\delta_f(H_j)\leq n+1.$$ Furthermore,  we have  $\delta_f(H)=0$ for any tropical hyperplane $H$ defined by a tropical linear polynomial with all coefficients in $\mathbb{R}.$
\end{theorem}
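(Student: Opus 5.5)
The defect relation will follow directly from Theorem \ref{T4.1.2} and Theorem \ref{Tt3.12}, without any further analytic input. For the first assertion, I discard the nonpositive term $-\sum_{j=0}^{n}N(r,1_{\mathbb{T}}\oslash f_j)$ in Theorem \ref{T4.1.2}. This term is nonpositive because each $f_j$ is tropical entire, so its roots are counted with nonnegative multiplicity and $N(r,1_{\mathbb{T}}\oslash f_j)\ge 0$. Dividing the resulting inequality by $T_f(r)$ gives
\[
q-n-1\le \sum_{j=1}^{q}\frac{N\bigl(r,\frac{1_{\mathbb{T}}}{P_j\circ f}\oslash\bigr)}{T_f(r)}+\frac{O(1)}{T_f(r)}
\]
for every $r$. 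No exceptional set needs to be removed, which is exactly the advantage of Theorem \ref{T4.1.2}.

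Next I take $\limsup_{r\to\infty}$ of both sides and use subadditivity of $\limsup$:
\[
q-n-1\le \sum_{j=1}^q\limsup_{r\to\infty}\frac{N\bigl(r,\frac{1_{\mathbb{T}}}{P_j\circ f}\oslash\bigr)}{T_f(r)}=\sum_{j=1}^q\bigl(1-\delta_f(H_j)\bigr),
\]
and rearranging yields $\sum_j\delta_f(H_j)\le n+1$. This step needs $T_f(r)\to\infty$, so that the $O(1)/T_f(r)$ term vanishes. That in turn needs $f$ to be nonconstant, which nondegeneracy guarantees: a constant map has image a single point, and every point lies on some tropical hyperplane.

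The only real subtlety, and the step I regard as the main obstacle, is this growth claim. I would justify it with the standard fact that a nonconstant tropical holomorphic curve satisfies $T_f(r)\ge cr$ for large $r$, as established in the preliminaries of Section 2. If $T_f$ were bounded, the defects would not even be well defined, so this case is excluded by hypothesis anyway. I also need the identity $\delta_f(H)=1-\limsup N/T$ stated just before the theorem, which comes from the first main theorem $T_f(r)=m_f(r,H)+N(r,\frac{1_{\mathbb{T}}}{P\circ f}\oslash)+O(1)$.

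For the second assertion, Theorem \ref{Tt3.12} gives $N\bigl(r,\frac{1_{\mathbb{T}}}{P\circ f}\oslash\bigr)=T_f(r)+O(1)$ whenever all coefficients of $P$ are real. Dividing by $T_f(r)\to\infty$ shows that the $\limsup$ equals $1$, hence $\delta_f(H)=0$.
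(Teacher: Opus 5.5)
Your argument is correct and follows the paper's route: the paper says the defect relation follows immediately from Theorem \ref{T4.1.2} (you make the omitted steps explicit by dropping the nonnegative terms $N(r,1_{\mathbb{T}}\oslash f_j)$, dividing by $T_f(r)$ and taking $\limsup$), and the second assertion from Theorem \ref{Tt3.12}. One correction: the linear lower bound for $T_f(r)$ is not in Section 2, and the paper leaves $T_f(r)\to\infty$ implicit; it follows because by the tropical Jensen formula $T_f(r)=N(r,1_{\mathbb{T}}\oslash\|f\|)$ with $\|f\|=\max_j f_j$ convex, and $\|f\|$ can only be affine if every nontrivial $f_j$ differs from it by a constant (a convex function bounded above on $\mathbb{R}$ is constant), i.e.\ only if $f$ is constant.
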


To apply Theorem \ref{T4.1.2}, we give an improvement (Theorem \ref{E5.4}) of tropical Nevanlinna's second main for meromorphic functions due to Laine-Tohge \cite{5}. Moreover, we show that the truncated second main theorem, which holds in the classical complex case, \textbf{fails} in the tropical setting (Example \ref{E5.7}), answering a question raised by Laine and Tohge \cite{5}.\par

We would like to point out that our main results such as Theorems\ref{T4.1.2}, \ref{T4.1.10} and \ref{Tt3.12}  in this paper can be generalized to the case of several variables in terms of the technique due to Cao and Peng \cite{Cao-peng}.\par

This paper is organized as follows. In the second section, we will briefly introduction some definitions and notation of the tropical Nevanlinna theory. In the third section, we will give some important lemmas on tropically linearly independent and tropical Cramer theorem. We oberseve that tropically linearly dependent is different from the linearly dependent in the Gondran-Minoux sense. Lemma \ref{L3.3} shows that a matrix $A$ is tropically singular if and only if the column vectors of it are tropically linearly dependent. This implies an identical condition on tropical hyperplanes in general position (see Lemma \ref{C2.13}). From the tropcial Gramer theorem, we give a Cramer system's upper bound (Theorem \ref{The2.20}) which plays a key role in the proof of our second main theorem.\par

The proofs of our second main theorems are given in the fourth section. In Subsection 4.1, we  first obtain Theorem \ref{T4.1.7} which is a general second main theorem with arbitrary tropical hyperplanes. After proposing a tropical version of product to the sum estimate (Theorem \ref{L3.7}) and then  we prove Theorem \ref{T4.1.10} by Theorem \ref{T4.1.7}.  In Subsection 4.2,  bypassing the lemma of logarithmic derivative, and directly proceeding from the property that tropical hyperplanes in general position, we prove Theorem \ref{T4.1.2}. In addition, we obtain a relationship
\(N(r, 1_{\mathbb{T}} \oslash C(f))\leq \sum_{j = 0}^{n}N(r, 1_{\mathbb{T}} \oslash f_j)+o(T_f(r))\)  (see Proposition \ref{C3.9}), and give alternative proof of Theorem \ref{T4.1.10} by Theorem \ref{T4.1.2}.\par

In the fifth section, we show that Theorem \ref{T4.1.2} improves and generalizes the Laine-Tohge's second main theorem for tropical meromorphic functions, without any growth condition and exceptional set. Furthermore, we provide an example (Example \ref{E5.7}) to affirm the Laine-Tohge's conjecture that the truncated Nevanlinna's second main theorem in the tropical setting is not true. \par

In the sixth section, we give Theorem \ref{CC4.10}) to relate the general position condition to the degeneracy parameters of Korhonen-Tohge.\par

\section{Preliminaries on tropical Nevanlinna theory} In this section, we will introduce the preliminaries of the tropical semiring, and some basic notation in tropical Nevanlinna theory (see references \cite{1, 12, 5, 9}). Tropical (max-plus) addition and multiplication are defined, respectively, by
\[
x\oplus y:=\max(x,y)
\]
and
\[
x\otimes y:=x + y.
\] In addition, we use the notation \(x\oslash y:=\frac{x}{y}\oslash=x - y\) and \(x^{\otimes \alpha}:=\alpha x\) for \(\alpha\in\mathbb{R}\). Denote by \(\mathbb{R}\cup\{-\infty\}:=\mathbb{T}\). The identity elements for the tropical operations are \(0_{\mathbb{T}}=-\infty\) for addition and $1_{\mathbb{T}}=0$ for multiplication.  \par

\subsection{Tropical meromorphic functions} First, we recall the definition of a tropical meromorphic function, as follows.
\begin{definition}\cite[Definition 1.1]{Risto15}\label{D1.11} A tropical meromorphic function means a continuous piecewise-linear function \(f: \mathbb{R}\to\mathbb{R}\).
\end{definition}
At the non-differentiable points of  \(f\), if
\[\omega_f(x):=\lim_{\varepsilon\to 0_{+}}f'(x + \varepsilon)-f'(x - \varepsilon)<0,
\]
the point $x$ is said to be a \textit{pole} of \(f\) with multiplicity \(-\omega_f(x)\), while if
\(
\omega_f(x)>0,
\)
then \(x\) is called a \textit{root} of \(f\) with multiplicity \(\omega_f(x)\).\par

\begin{definition}\cite{6}\label{D2.2} A tropical entire function $f$ is a tropical meromorphic function with no poles, that is,
$$f(x)=\bigoplus_{j=0}^{+\infty}(a_{j}+ n_{j}x),$$
where all $a_{j}\in\mathbb{T}$ and $n_{j}\in\mathbb{R}.$
\end{definition}

Korhonen and Tohge [\cite{6}, Proposition 3.3] proved that any tropical meromorphic function \(f\) can be expressed as \(f = h\oslash g\), where \(g\) and \(h\)  are two tropical entire functions without any common roots.\par

For a tropical meromorphic function \(f\), its \textit{tropical proximity function} is defined by
\[
m(r,f):=\frac{1}{2}\sum_{\sigma=\pm1}f^{+}(\sigma r)=\frac{f^{+}(r)+f^{+}(-r)}{2},
\] where \(f^{+}(x)=\max\{f(x),0\}\), and the \textit{tropical counting function} is defined by
\[
N(r,f):=\frac{1}{2}\int_{0}^{r}n(t,f)dt=\frac{1}{2}\sum_{|b_v|<r}|\omega_f(b_v)|(r - |b_{\nu}|),
\] where
\[
n(t,f)=\sum_{\substack{|s|\leq t\\ \omega_f(s)<0}}|\omega_f(s)|.
\] The \textit{tropical Nevanlinna's characteristic function} $T(r,f)$ of $f$ is defined as
\[
T(r,f):=m(r,f)+N(r,f).
\]

These tropical Nevanlinna's functions have the following basic properties.\par

\begin{lemma}\cite[Lemma 3.2]{Risto15}\label{L2.3} Let $f$ and $g$ be two tropical meromorphic functions.
\begin{enumerate}
    \item[(i)]
    \[
    \begin{aligned}
    m(r,f \otimes g) &\leq m(r,f) + m(r,g), \\
    N(r,f \otimes g) &\leq N(r,f) + N(r,g), \\
    T(r,f \otimes g) &\leq T(r,f) + T(r,g),\\
    m(r,f \oplus g) &\leq m(r,f) + m(r,g), \\
    N(r,f \oplus g) &\leq N(r,f) + N(r,g), \\
    T(r,f \oplus g) &\leq T(r,f) + T(r,g).
    \end{aligned}
    \]

    \item[(ii)] If \( f \leq g \), then \( m(r,f) \leq m(r,g) \).
    \item[(iii)] Given a positive real number \( \alpha \),
    \[
    \begin{aligned}
    m(r,f^{\otimes \alpha}) &= \alpha m(r,f), \\
    N(r,f^{\otimes \alpha}) &= \alpha N(r,f), \\
    T(r,f^{\otimes \alpha}) &= \alpha T(r,f).
    \end{aligned}
    \]

\end{enumerate}
\end{lemma}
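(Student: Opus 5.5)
All three parts follow from pointwise or local comparisons. We argue from the definitions of $m(r,\cdot)$ and $N(r,\cdot)$ and then add the resulting inequalities to obtain the statements for $T(r,\cdot)$. The only step that needs care is the counting-function estimate for $f\oplus g$, where the multiplicity at a crossing point of $f$ and $g$ must be controlled.

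\emph{Proximity functions and (ii).} We compare the functions $(\cdot)^+$ pointwise. For every $x$ we have $(f+g)^+(x)\le f^+(x)+g^+(x)$ and $\max(f,g)^+(x)=\max(f^+(x),g^+(x))\le f^+(x)+g^+(x)$. Evaluating at $x=\pm r$ and averaging gives the two inequalities for $m$ in (i). If $f\le g$, then $f^+\le g^+$ pointwise, which gives (ii) in the same way. For $\alpha>0$ we have $(\alpha f)^+=\alpha f^+$, which gives $m(r,f^{\otimes\alpha})=\alpha m(r,f)$.

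\emph{Counting functions.} It suffices to compare the local pole multiplicities $\omega^-(x):=\max\{-\omega(x),0\}$ at each point. Integrating the resulting bound on $n(t,\cdot)$ then yields the bound on $N(r,\cdot)$.
\begin{itemize}
\item For $f\otimes g=f+g$, one-sided derivatives add, so $\omega_{f+g}=\omega_f+\omega_g$. Hence $\omega_{f+g}^-\le\omega_f^-+\omega_g^-$, and so $n(t,f\otimes g)\le n(t,f)+n(t,g)$.
\item For $f^{\otimes\alpha}$ we have $\omega_{\alpha f}=\alpha\omega_f$, so the poles are the same points with multiplicities multiplied by $\alpha$. This gives the equality for $N$ in (iii), and the equality for $T$ follows.
\item For $h=f\oplus g=\max(f,g)$ we consider a point $x$. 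If $f(x)\ne g(x)$, say $f(x)>g(x)$, then $h=f$ near $x$ and $\omega_h(x)=\omega_f(x)$. If $f(x)=g(x)$, then by piecewise linearity the right derivative is $h'_+(x)=\max(f'_+(x),g'_+(x))$ and the left derivative is $h'_-(x)=\min(f'_-(x),g'_-(x))$. Therefore
\[
\omega_h(x)=h'_+(x)-h'_-(x)\ge \max\{\omega_f(x),\omega_g(x)\}.
\]
Consequently $\omega_h^-(x)\le\min\{\omega_f^-(x),\omega_g^-(x)\}\le\omega_f^-(x)+\omega_g^-(x)$, and so $n(t,f\oplus g)\le n(t,f)+n(t,g)$.
\end{itemize}
The crossing-point case in the last item, which gives the inequality $\omega_h\ge\max\{\omega_f,\omega_g\}$, is the main point to verify. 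Everything else is immediate.

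\emph{Conclusion.} Adding the proximity and counting inequalities gives the inequalities for $T(r,f\otimes g)$ and $T(r,f\oplus g)$ in (i). Combining $m(r,f^{\otimes\alpha})=\alpha m(r,f)$ with $N(r,f^{\otimes\alpha})=\alpha N(r,f)$ gives $T(r,f^{\otimes\alpha})=\alpha T(r,f)$, which completes (iii).
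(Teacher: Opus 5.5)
Your proof is correct. The paper gives no argument of its own for this lemma; it only cites Lemma 3.2 of the reference, so there is no in-text proof to compare against.

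Your checks all go through:
\begin{itemize}
\item The pointwise inequalities $(f+g)^+\le f^++g^+$ and $\max(f,g)^+=\max(f^+,g^+)\le f^++g^+$ give the proximity bounds. Monotonicity of $(\cdot)^+$ gives (ii), and $(\alpha f)^+=\alpha f^+$ for $\alpha>0$ gives the proximity part of (iii).
\item The identities $\omega_{f+g}=\omega_f+\omega_g$ and $\omega_{\alpha f}=\alpha\omega_f$ handle the counting functions for $\otimes$ and for powers.
\item At a point with $f(x)=g(x)$, your one-sided slope computation $h'_+=\max(f'_+,g'_+)$, $h'_-=\min(f'_-,g'_-)$ is right and yields $\omega_{f\oplus g}(x)\ge\max\{\omega_f(x),\omega_g(x)\}$. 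Local finiteness of the break points of $f$ and $g$ is what lets you read off these slopes. It also guarantees that $f\oplus g$ is again piecewise linear with locally finite break points.
\end{itemize}

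One presentational point. The bound $\omega^-_{f\oplus g}\le\min\{\omega^-_f,\omega^-_g\}$ holds only at crossing points. Where $f(x)>g(x)$ you have $\omega^-_{f\oplus g}(x)=\omega^-_f(x)$, which can exceed that minimum. The bound by $\omega^-_f+\omega^-_g$ holds in both cases, so state the two cases separately before writing ``consequently''.

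A common alternative for $N(r,f\oplus g)$ uses the Korhonen--Tohge representation recalled in Section 2. Write $f=f_1\oslash f_2$ and $g=g_1\oslash g_2$ with tropical entire functions without common roots. Then $f\oplus g=(f_1\otimes g_2\oplus g_1\otimes f_2)\oslash(f_2\otimes g_2)$. The numerator is a maximum of convex functions, hence has no poles. So every pole of the quotient has multiplicity at most the root multiplicity of $f_2\otimes g_2$, which gives $N(r,f\oplus g)\le N(r,1_{\mathbb{T}}\oslash f_2)+N(r,1_{\mathbb{T}}\oslash g_2)=N(r,f)+N(r,g)$. That route fits the quotient framework used elsewhere in the paper but needs the factorization theorem. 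Your local slope argument is more elementary and self-contained, and it gives the sharper local estimate at crossing points.
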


Haulburd and Southall proved the tropical Poisson-Jensen formula [\cite{1}, Lemma 3.1] which implies tropical Jensen formula as follows:
\[
N(r, 1_{\mathbb{T}}\oslash f) - N(r, f) = \frac{1}{2} \sum_{\sigma = \pm 1} f(\sigma r) - f(0).
\]
This implies that
\[
T(r,f) = T(r,1_{\mathbb{T}}\oslash f) + f(0).
\]

\subsection{Tropical holomorphic curves}
Denote by $\mathbb{TP}^n:=\mathbb{T}^{n + 1}\setminus\{(0_{\mathbb{T}}, \dots, 0_{\mathbb{T}})\}$ the tropical projective space, where \[(a_0,a_1,\dots,a_n)\sim(b_0,b_1,\dots,b_n)
\]
if and only if
\[
(a_0,a_1,\dots,a_n)=\lambda\otimes(b_0,b_1,\dots,b_n)=(\lambda + b_0,\lambda + b_1,\dots,\lambda + b_n)
\]
for some \(\lambda\in\mathbb{R}\).  Denote by \([a_0:a_1:\dots:a_n]\) the equivalence class of \((a_0,a_1,\dots,a_n)\). For instance, \(\mathbb{TP}^1\) is just the completed max-plus semiring \(\mathbb{T}\cup\{+\infty\}=\mathbb{R}\cup\{\pm\infty\}\) by the map such that
\[
[1_\mathbb{T}:a]\mapsto a\oslash 1_\mathbb{T} = a \text{ for }a\in\mathbb{T},
\]
and
\[
[0_\mathbb{T}:a]\mapsto a\oslash 0_\mathbb{T} = +\infty\text{ for }a\in\mathbb{R}.
\]

Let $f_0, f_1, \dots, f_n$ be tropical entire functions without any common roots. The tuple $\mathbf{f} = (f_0, \dots, f_n)$ defines a tropical holomorphic map (curve) $f:= [f_0 : \dots: f_n]: \mathbb{R} \to \mathbb{TP}^n,$ and $\mathbf{f}$ is called a reduced representation of $f.$ The \textit{tropical Cartan's characteristic function} of $f$ is defined by
\[
T_f(r):=\frac{1}{2}\sum_{\sigma = \pm 1}\|f(\sigma r)\|-\|f(0)\|=\frac{1}{2}[\|f(r)\|+\|f(-r)\|]-\|f(0)\|
\]
where $\|f(x)\|=\max\{f_0(x),\dots,f_n(x)\}.$ It's well-known that $T_f(r)$ is independent of the reduced representation of $f$ [see \cite[Proposition 4.3]{6}. When $n=1,$ we know that $T(r,f)=T_{f}(r)+O(1).$\par

We now recall the definition of the tropical determinant and some relative operators. The operations of tropical addition $\oplus$ and tropical multiplication $\otimes$ for the $(n + 1)\times(n + 1)$ matrices $A=(a_{ij})$ and $B=(b_{ij})$ are defined by
\[
A\oplus B=(a_{ij}\oplus b_{ij})
\]
and
\[
A\otimes B=\left(\bigoplus_{k = 0}^{n}a_{ik}\otimes b_{kj}\right),
\]
respectively.

The \textit{tropical determinant} $|A|$ of $A$ is defined by
\begin{equation*}
|A|=\bigoplus_{\pi\in S_{n+1}}(a_{0,\pi(0)}\dots a_{n,\pi(n)}),
\end{equation*}
where $S_{n+1}$ is the group of all permutations on $\{0,1,\dots,n\}$.
As in the familiar way, the \textit{transpose} of $A=(a_{i,j})$ is defined by $A^t=(a_{j,i})$. The \textit{minor} $A_{i,j}$ is the matrix obtained by deleting the $i$th row and $j$th column of $A$. The \textit{adjoint} $A^{adj}$ of $A=(a_{i,j})$  is defined as the transpose of the matrix whose entries are the determinants of the corresponding minors, that is, $[A^{adj}]_{i,j}= |A_{j,i}|$.
Equivalently, the tropical determinant $|A|$ can be also  written  in terms of minors as
\begin{equation*}
|A|=\bigoplus_{j}a_{i,j}|A_{i,j}|
\end{equation*}
for some fixed index $i$. The tropical determinant has the following properties \cite[Remark 2.1]{12}:\par
\begin{enumerate}
\item[\textit{(i)}] Transposition and reordering of rows or columns leave the determinant unchanged;
\item[\textit{(iii)}] The determinant is linear with respect to scalar multiplication of any given row or column by a real number.
\end{enumerate}

Choose $c\in\mathbb{R}\setminus\{0\}$. Let $f=(f_0,f_1,\dots,f_n):\mathbb{R}\to\mathbb{TP}^n$ be a tropical holomorphic map. We use short notation
\[
\overline{f}_{j}^{[0]}:=f_j(x),\quad \overline{f}_{j}^{[1]}:=f_j(x + c),\quad \overline{f}_{j}^{[k]}:=f_j(x + kc)
\]
for all $j,k\in\{0,1,\dots,n\}$. The tropical Casorati determinant of $f$ is defined by
\[
C(f):=C(f_0,f_1,\dots,f_n)=\bigoplus_{\pi\in S_{n+1}}\overline{f}_{0}^{[\pi(0)]}\otimes\overline{f}_{1}^{[\pi(1)]}\otimes\dots\otimes\overline{f}_{n}^{[\pi(n)]},
\]
where $S_{n+1}$ is the set of all the permutations on $\{0,1,\dots,n\}$. It has the following properties.\par

\begin{lemma}\cite[Lemma 5.1]{6}
\label{LL2.3}
If \( f_0, \dots, f_n \) and \( h \) are tropical entire functions, then
\begin{enumerate}
    \item[\textit{(i)}] \( C(f_0, f_1, \dots, f_i, \dots, f_j, \dots, f_n) = C(f_0, f_1, \dots, f_j, \dots, f_i, \dots, f_n) \) for all $i,$ $j \in \{0, \dots, n\}$ such that \( i \neq j \).
    \item[\textit{(ii)}] \( C(1_\mathbb{T}, f_1, \dots, f_n) \geq C(\overline{f}_1, \dots, \overline{f}_n) \).
    \item[\textit{(iii)}] \( C(0_\mathbb{T}, f_1, \dots, f_n) = 0_\mathbb{T} \).
    \item[\textit{(iv)}] \( C(f_0 \otimes h, f_1 \otimes h, \dots, f_n \otimes h) = h \otimes \overline{h} \otimes \dots \otimes \overline{h}^{[n]} \otimes C(f_0, f_1, \dots, f_n) \).
\end{enumerate}
\end{lemma}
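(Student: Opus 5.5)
The statement is a list of four elementary properties of the tropical Casorati determinant. I plan to verify each one directly from the defining formula
\[
C(f_0,\dots,f_n)=\max_{\pi\in S_{n+1}}\sum_{k=0}^{n} f_k(x+\pi(k)c),
\]
using only that $\pi\mapsto\pi\circ\tau$ and similar reindexings are bijections of the symmetric group. No analytic input is needed; everything is pointwise in $x$.

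For (i), let $\tau$ be the transposition of $i$ and $j$. The term of $C(\dots,f_j,\dots,f_i,\dots)$ indexed by $\pi$ equals the term of $C(\dots,f_i,\dots,f_j,\dots)$ indexed by $\pi\circ\tau$. Since $\pi\mapsto\pi\circ\tau$ is a bijection of $S_{n+1}$, the two maxima run over the same set of real numbers and so coincide.

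For (iii), every term of the maximum contains the summand $0_{\mathbb{T}}=-\infty$, so every term equals $-\infty$, and hence so does the maximum.

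For (iv), fix $\pi$. The corresponding term of $C(f_0\otimes h,\dots,f_n\otimes h)$ is
\[
\sum_k f_k(x+\pi(k)c)+\sum_k h(x+\pi(k)c).
\]
Because $\pi$ is a bijection of $\{0,\dots,n\}$, the second sum equals $\sum_{m=0}^n h(x+mc)=h\otimes\overline{h}\otimes\dots\otimes\overline{h}^{[n]}$, which does not depend on $\pi$. This common summand can therefore be pulled out of the maximum, giving (iv).

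For (ii), I read $\overline{f}_k$ as $f_k(x+c)$, so that
\[
C(\overline f_1,\dots,\overline f_n)=\max_{\sigma}\sum_{k=1}^n f_k\bigl(x+(\sigma(k)+1)c\bigr),
\]
where $\sigma$ ranges over the bijections from $\{1,\dots,n\}$ to $\{0,\dots,n-1\}$. Given such a $\sigma$, I define $\pi\in S_{n+1}$ by $\pi(0)=0$ and $\pi(k)=\sigma(k)+1$ for $k\ge1$. Since the first entry $1_{\mathbb{T}}=0$ contributes nothing, the $\pi$-term of $C(1_{\mathbb{T}},f_1,\dots,f_n)$ is exactly the $\sigma$-term above. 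Thus every term on the right-hand side appears among the terms on the left. The left maximum also includes the permutations with $\pi(0)\neq0$, so it is at least as large, which gives the inequality.

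The only step requiring real care is (ii). There I must fix the indexing convention for $\overline{f}$ in the $n$-function Casorati determinant (the shift starts at $c$ rather than at $0$) and check that the embedding $\sigma\mapsto\pi$ really lands in $S_{n+1}$. The other three properties are immediate reindexings.
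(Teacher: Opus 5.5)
Your verification is correct. Each of (i)--(iv) follows from the permutation expansion of $C$ by the reindexings you describe, and in (ii) your embedding $\pi(0)=0$, $\pi(k)=\sigma(k)+1$ (equivalently, expanding along the constant entry $1_{\mathbb{T}}$ to isolate the minor $C(\overline{f}_1,\dots,\overline{f}_n)$) uses the right shift convention. The paper gives no proof of its own and simply quotes the lemma from Korhonen--Tohge \cite{6}, so your direct check from the definition is presumably the intended standard argument.
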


Assume that $f:=[f_0, \dots, f_{n}]$ is a tropical holomorphic curve into $\mathbb{TP}^{n}.$ Let $\hat{g}$ and $\tilde{g}$ are two tropical linear combinations of the $n + 1$ tropical entire functions $f_0, \dots, f_n$ without common roots. Korhonen and Tohge proved that the tropical Nevanlinna's characteristic function of the quotient of the two tropical entire functions $\hat{g}$ and $\tilde{g}$ can be controlled by the tropical Cartan's characteristic function of $f.$\par

\begin{lemma}\cite[Lemma 4.7]{6} \label{L2.6}If $f = [f_0: \dots: f_n]: \mathbb{R}\to\mathbb{TP}^n$ is a tropical holomorphic curve, then
\[
T\left(r, \hat{g}\oslash\tilde{g}\right)+N_{\min}\left(r, 0, \hat{g}, \tilde{g}\right) \leq T_f(r)+C+\max\{f_0(0),\dots,f_n(0)\}-\tilde{g}(0),
\]
where $C$ is the maximum of the coefficients of the two tropical linear combinations $\hat{g}$ and $\tilde{g}$ over $\mathbb{T}$.
\end{lemma}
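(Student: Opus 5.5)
The plan is a direct computation: rewrite both terms on the left using the tropical Jensen formula, so that everything collapses to a boundary expression in $\hat g\oplus\tilde g$. That expression is then compared pointwise with $\|f\|$. I read $N_{\min}(r,0,\hat g,\tilde g)$ as the counting function of the common roots of $\hat g$ and $\tilde g$, where a root at $x$ is counted with multiplicity $\min\{\omega_{\hat g}(x),\omega_{\tilde g}(x)\}$. I assume, as is implicit in the statement, that $\tilde g\not\equiv 0_{\mathbb T}$, so that $\tilde g(0)$ is finite.

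\textbf{Counting term.} Since $\hat g$ and $\tilde g$ are tropical entire, $\omega_{\hat g},\omega_{\tilde g}\ge 0$ everywhere. Also $\omega_{\hat g\oslash\tilde g}=\omega_{\hat g}-\omega_{\tilde g}$. Hence the pole multiplicity of $\hat g\oslash\tilde g$ at $x$ is
\[
\max\{\omega_{\tilde g}(x)-\omega_{\hat g}(x),0\}=\omega_{\tilde g}(x)-\min\{\omega_{\hat g}(x),\omega_{\tilde g}(x)\}.
\]
Summing with the weights $\tfrac12(r-|x|)$ gives
\[
N(r,\hat g\oslash\tilde g)=N(r,1_{\mathbb T}\oslash\tilde g)-N_{\min}(r,0,\hat g,\tilde g).
\]

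\textbf{Proximity term.} Pointwise we have $(\hat g-\tilde g)^+=\max\{\hat g,\tilde g\}-\tilde g=(\hat g\oplus\tilde g)\oslash\tilde g$. Therefore
\[
m(r,\hat g\oslash\tilde g)=\tfrac12\sum_{\sigma=\pm1}\bigl[(\hat g\oplus\tilde g)(\sigma r)-\tilde g(\sigma r)\bigr].
\]

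\textbf{Jensen step.} Because $\tilde g$ has no poles, the tropical Jensen formula gives $N(r,1_{\mathbb T}\oslash\tilde g)=\tfrac12\sum_{\sigma}\tilde g(\sigma r)-\tilde g(0)$. Adding the three displays, the $\tilde g(\sigma r)$ terms cancel and we obtain the identity
\[
T(r,\hat g\oslash\tilde g)+N_{\min}(r,0,\hat g,\tilde g)=\tfrac12\sum_{\sigma=\pm1}(\hat g\oplus\tilde g)(\sigma r)-\tilde g(0).
\]

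\textbf{Comparison with $\|f\|$.} Each of $\hat g$ and $\tilde g$ is a maximum of terms $a_k+f_k$ whose coefficients are at most $C$. Hence $(\hat g\oplus\tilde g)(x)\le C+\max_k f_k(x)=C+\|f(x)\|$ for every $x$. By the definition of $T_f(r)$,
\[
\tfrac12\sum_{\sigma}\|f(\sigma r)\|=T_f(r)+\|f(0)\|=T_f(r)+\max\{f_0(0),\dots,f_n(0)\}.
\]
Substituting this into the identity yields exactly the claimed inequality.

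\textbf{Main obstacle.} The only delicate point is the pole-multiplicity bookkeeping: one must verify that the cancellation of common roots in the quotient is captured precisely by $\min\{\omega_{\hat g},\omega_{\tilde g}\}$, which relies on entireness ($\omega\ge0$). The remaining steps are routine once the Jensen formula is applied to $\tilde g$ alone.
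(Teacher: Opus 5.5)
Your proof is correct and complete. The paper gives no proof of this lemma. It imports it verbatim from Korhonen--Tohge \cite{6}, and uses it only in the weakened form $T(r,g\oslash f_0)\le T_f(r)+O(1)$ in the proofs of Theorem \ref{T4.1.7} and Proposition \ref{C3.9}. So there is no in-paper argument to compare against; your proposal supplies a self-contained derivation.

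Your route has three steps:
\begin{itemize}
\item the pole bookkeeping $\max\{\omega_{\tilde g}-\omega_{\hat g},0\}=\omega_{\tilde g}-\min\{\omega_{\hat g},\omega_{\tilde g}\}$, valid because both functions are entire;
\item the pointwise identity $(\hat g\oslash\tilde g)^+=(\hat g\oplus\tilde g)\oslash\tilde g$;
\item the Jensen formula applied to $\tilde g$ alone.
\end{itemize}
This yields slightly more than the statement, namely the exact identity
\[
T(r,\hat g\oslash\tilde g)+N_{\min}(r,0,\hat g,\tilde g)=\tfrac12\sum_{\sigma=\pm1}(\hat g\oplus\tilde g)(\sigma r)-\tilde g(0).
\]
It shows that the only loss in the lemma is the final pointwise bound $\hat g\oplus\tilde g\le C+\|f\|$.

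The tacit hypotheses you flagged are exactly the ones the statement needs:
\begin{itemize}
\item $\tilde g\not\equiv 0_{\mathbb T}$, and implicitly $\hat g\not\equiv 0_{\mathbb T}$, so that the quotient is a genuine tropical meromorphic function;
\item $N_{\min}$ read as the counting function of common roots with multiplicity $\min\{\omega_{\hat g},\omega_{\tilde g}\}$.
\end{itemize}
Under that reading of $N_{\min}$, it is nonnegative. This is what the paper's later use of the lemma silently relies on when it drops that term.
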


The following is the tropical version of the logarithmic derivative lemma with subnormal growth (also known as minimal hypertype).\par

\begin{lemma}\cite[Theorem 3.1]{7}\label{T2.7}
Let $c\in\mathbb{R}\setminus\{0\}$. If $f$ is a tropical meromorphic function on $\mathbb{R}$ with
\begin{equation*}
\limsup_{r\rightarrow\infty}\frac{\log T_{f}(r)}{r}=0,
\end{equation*}
then
\[m(r,f(x + c)\oslash f(x)) = o(T_{f}(r))\]
where $r$ runs to infinity outside of a set of zero upper density measure $E$, i.e.,
\[\overline{dens}E=\limsup_{r\rightarrow\infty}\frac{1}{r}\int_{E\cap[1,r]}dt = 0.\]
\end{lemma}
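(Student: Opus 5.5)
The plan is to derive a pointwise estimate for $f(x+c)-f(x)$ on $[-r,r]$ from the tropical Poisson--Jensen formula of Halburd--Southall \cite[Lemma 3.1]{1}. I then convert it into an estimate of $m(r,f(x+c)\oslash f(x))$ by $T_f$ at a slightly larger radius, and finally remove the larger radius using a growth lemma valid under $\limsup_{r\to\infty}\frac{\log T_f(r)}{r}=0$ outside a set of zero upper density.

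\emph{Step 1 (pointwise bound).} Fix $R>r+|c|$. For $|x|\le r$ (so that $|x|,|x+c|<R$), the Poisson--Jensen formula writes $f(x)$ as a linear interpolation of the boundary values $f(R),f(-R)$, plus a sum over the roots $a_\mu$ and poles $b_\nu$ in $(-R,R)$ of kernels of the form $\frac12\,\omega\,(|x-a|R-\ldots)/R$. Each kernel is Lipschitz in $x$ with constant at most $1$ times the multiplicity, and the boundary part is Lipschitz with constant $\frac{|f(R)|+|f(-R)|}{2R}$. Subtracting the formula at $x$ and at $x+c$ gives
\[
|f(x+c)-f(x)|\le |c|\Big(\frac{|f(R)|+|f(-R)|}{2R}+ n(R,f)+n(R,-f)\Big).
\]
Since $|f(\pm R)|\le f^+(\pm R)+(-f)^+(\pm R)$ and by the Jensen identity $T(R,-f)=T(R,f)+O(1)$, the first term is $O\big(\tfrac{|c|}{R}T(R,f)\big)$.

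\emph{Step 2 (counting term).} I will use
\[
n(R,f)\le \frac{2}{R'-R}\big(N(R',f)-N(R,f)\big)\le \frac{2}{R'-R}N(R',f)
\]
for $R'>R$, and similarly for $-f$. Choosing $R=r+s$ and $R'=r+2s$ with a fixed large $s$, this yields
\[
m\big(r,f(x+c)\oslash f(x)\big)\le \frac{C|c|}{s}\,T(r+2s,f)+O(1).
\]
Here $C$ is absolute.

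\emph{Step 3 (growth lemma; the main obstacle).} I need a Borel-type lemma: if $T$ is nondecreasing with $\limsup_{r\to\infty}\frac{\log T(r)}{r}=0$, then for every fixed $s>0$ and $\varepsilon>0$, $T(r+s)\le (1+\varepsilon)T(r)$ outside a set of zero upper density. My intended argument is by contradiction: if the exceptional set $E$ had positive upper density $\delta$, I would pick a maximal $s$-separated sequence in $E\cap[1,r]$, of length $\gtrsim \delta r/s$ along a subsequence of $r$. Iterating the reverse inequality along it gives $T(r+s)\ge(1+\varepsilon)^{\delta r/(2s)}T(1)$, which forces $\limsup\frac{\log T(r)}{r}\ge \frac{\delta\log(1+\varepsilon)}{2s}>0$, a contradiction. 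Some care is needed to handle overlapping intervals and to arrange the sequence monotonically.

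\emph{Step 4 (conclusion).} Applying Step 3 with $2s$ in place of $s$ to $T(\cdot,f)$ gives
\[
m\big(r,f(x+c)\oslash f(x)\big)\le \frac{C|c|(1+\varepsilon)}{s}T(r,f)+O(1)
\]
off a zero-density set. Letting $s\to\infty$ and $\varepsilon\to0$ along a sequence, and using that a countable union of zero-upper-density sets can be replaced by one such set via a standard diagonal choice of thresholds $r_k$, gives $o(T(r,f))$.
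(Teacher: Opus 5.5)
Your proof is correct. The paper does not prove this lemma itself but quotes it from Cao--Zheng \cite{7}, whose argument (building on Halburd--Southall and Laine--Tohge) follows the same route: the Poisson--Jensen difference estimate, with $n(R,\pm f)$ traded for $N$ on a slightly larger radius, then a Borel-type growth lemma like Lemma \ref{L2.8}, which you reprove directly without convexity and then diagonalize over $s\to\infty$ (necessary, since a fixed shift leaves the factor $|c|/s$). The two facts you use implicitly also hold: $T(r,f)$ is nondecreasing, because $T(r,f)=\frac12\sum_{\sigma=\pm1}\max(h,g)(\sigma r)-g(0)$ for $f=h\oslash g$ with $h,g$ convex; and $T(r,f)\to\infty$ for nonconstant $f$ (the constant case is trivial), so the $O(1)$ terms, which are uniform in $s$, are absorbed into $o(T(r,f))$.
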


\begin{lemma}\cite[Lemma 3.3]{7}\label{L2.8} Let $T(r)$ be a nondecreasing, positive, convex, continuous function on $[1,+\infty)$ with
\[
\limsup_{r\to\infty}\frac{\log T(r)}{r}=0.
\]
Then for any fixed positive real value $c,$
\[
T(r)\leq T(r + c)\leq(1 + o(r))T(r),\ r\notin E\to\infty,
\]
where the exceptional set $E$ is a set with zero upper density measure.
\end{lemma}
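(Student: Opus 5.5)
The lower bound $T(r)\le T(r+c)$ follows at once from monotonicity, since $c>0$. For the upper bound, the plan is to show that for each fixed $\varepsilon>0$ the set
\[
E_\varepsilon:=\{r\ge 1:\ T(r+c)>(1+\varepsilon)T(r)\}
\]
has zero upper density. I will then glue the sets $E_{1/m}$ together by a diagonal construction into a single exceptional set $E$ of zero upper density, outside of which $T(r+c)\le(1+o(1))T(r)$. (The $o(r)$ in the statement is read as $o(1)$, which is what the argument yields and what is used later.) Since $T$ is continuous, $E_\varepsilon$ is relatively open in $[1,\infty)$, hence measurable. Convexity is not needed beyond this; positivity is needed so that $\log T$ makes sense.

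\emph{Step 1 (density of $E_\varepsilon$).} Fix $\varepsilon>0$ and pick points greedily, spaced at least $c$ apart. Let $r_1\in E_\varepsilon$ be within a tiny margin of $\inf E_\varepsilon$ (or take $r_1$ as the infimum itself, using openness to handle endpoints). Given $r_k$, choose $r_{k+1}\in E_\varepsilon\cap[r_k+c,\infty)$ near the infimum of that set. Then
\[
E_\varepsilon\cap[1,r]\subset\bigcup_{r_k\le r}[r_k,r_k+c]
\]
up to a negligible correction, so $\mathrm{meas}(E_\varepsilon\cap[1,r])\le c\,K(r)+O(1)$, where $K(r)=\#\{k: r_k\le r\}$. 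Monotonicity and the defining inequality give
\[
T(r_{k+1})\ge T(r_k+c)>(1+\varepsilon)T(r_k),
\]
so $T(r_k)\ge(1+\varepsilon)^{k-1}T(r_1)$. Hence
\[
K(r)\le 1+\frac{\log T(r)-\log T(r_1)}{\log(1+\varepsilon)}=o(r)
\]
by the hypothesis $\limsup_{r\to\infty}\log T(r)/r=0$. Therefore $\overline{dens}\,E_\varepsilon=0$.

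\emph{Step 2 (diagonal gluing).} The sets increase as $\varepsilon$ decreases: $E_{1/j}\subset E_{1/m}$ for $j\le m$. By Step 1, choose an increasing sequence $R_m\to\infty$ such that $\mathrm{meas}(E_{1/m}\cap[1,r])\le r/m$ for all $r\ge R_m$. Set
\[
E:=\bigcup_{m}\bigl(E_{1/m}\cap[R_m,R_{m+1})\bigr).
\]
For $r\in[R_m,R_{m+1})$, nestedness gives $E\cap[1,r]\subset E_{1/m}\cap[1,r]$ up to a bounded initial piece. So $\mathrm{meas}(E\cap[1,r])\le r/m+O(1)$, and therefore $\overline{dens}\,E=0$. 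If $r\notin E$ and $r\in[R_m,R_{m+1})$, then $r\notin E_{1/m}$, i.e. $T(r+c)\le(1+1/m)T(r)$. Since $m\to\infty$ as $r\to\infty$, this is the claimed bound.

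The main obstacle is the bookkeeping in Step 1: the greedy choice near an infimum, and checking that the intervals $[r_k,r_k+c]$ really cover $E_\varepsilon\cap[1,r]$. Using openness of $E_\varepsilon$, or simply taking infima and using closedness of $\{T(r+c)\ge(1+\varepsilon)T(r)\}$ instead, resolves this. The rest is routine.
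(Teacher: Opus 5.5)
Your proof is correct. The paper gives no proof of this lemma; it quotes it from Cao--Zheng (their Lemma 3.3). So there is no in-paper argument to compare against, and your proposal has to stand on its own, which it does.

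Your route is an elementary greedy covering argument:
\begin{itemize}
\item For fixed $\varepsilon$, the selected points satisfy $T(r_{k+1})\ge T(r_k+c)\ge(1+\varepsilon)T(r_k)$. Hence their number up to $r$ is at most $1+\log\bigl(T(r)/T(r_1)\bigr)/\log(1+\varepsilon)=o(r)$.
\item The bad set is covered by that many intervals of length $c$, so it has zero upper density.
\item The diagonal gluing of the nested sets $E_{1/m}$ gives a single set of zero upper density, outside which the bound holds with a genuine $o(1)$.
\end{itemize}
This uses only monotonicity, positivity and continuity. Convexity plays no role; it is the kind of hypothesis a derivative-based (Borel-type) growth lemma needs. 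Such an argument can give an explicit error term and allow shifts that grow with $r$. Yours gives only a qualitative $o(1)$ for a fixed shift and a non-explicit exceptional set. That is exactly what is used in \eqref{eqq3.6}: finitely many fixed shifts $j|c|$, and a finite union of zero-upper-density sets again has zero upper density.

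Three small tidy-ups:
\begin{itemize}
\item You need not reinterpret $o(r)$: an $o(1)$ bound trivially implies the literal statement.
\item It is cleaner to work from the start with the closed set $F_\varepsilon=\{r\ge 1: T(r+c)\ge(1+\varepsilon)T(r)\}\supset E_\varepsilon$. Take $r_1=\min F_\varepsilon$ and $r_{k+1}=\min\bigl(F_\varepsilon\cap[r_k+c,\infty)\bigr)$. Any $s\in F_\varepsilon$ with $r_k\le s<r_{k+1}$ must satisfy $s<r_k+c$ by minimality of $r_{k+1}$. So $F_\varepsilon\cap[1,r]\subset\bigcup_{r_k\le r}[r_k,r_k+c)$ holds exactly, with no correction term.
\item In Step 2, $E\cap[1,R_1)=\emptyset$, so the inclusion $E\cap[1,r]\subset E_{1/m}\cap[1,r]$ for $r\in[R_m,R_{m+1})$ is also exact.
\end{itemize}
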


\section{Tropically linearly independent and tropical Cramer theorem}
\subsection{Tropical linear algebra}

First, we recall the notion of tropical linear dependence for vectors.\par

\begin{definition}\cite[Definition 1.2]{12}\label{D3.1} Let $v_i=(v_{i0}, \dots, v_{in})^{t},$ where $i\in \{1, \dots, m\}.$ A collection of vectors $v_1,$ $\dots,$ $v_m$ is said to be tropically linearly dependent if there exist $\alpha_1, \dots, \alpha_m\in\mathbb{T}$, not all of them equal to \( 0_{\mathbb{T}} \), such that for every $0\leq j\leq n$, the maximum in the expression
\[
\bigoplus_{i=1}^{m}\alpha_i\otimes v_{ij},
\]
is attained at least twice. Otherwise, the collection of vectors $v_1,$ $\dots,$ $v_m$ is said to be tropically linearly independent.
\end{definition}

Inspired by Definition \ref{D3.1}, we introduce the notion of tropical linear dependence (or independence) for tropical meromorphic functions as follows.\par

\begin{definition}\label{D2.3} Tropical meromorphic functions $f_0,\dots,f_n$ are tropically linearly dependent if there exist  $\alpha_0, \dots, \alpha_n\in\mathbb{T}$, not all of them \( 0_{\mathbb{T}} \), such that the maximum in the expression
\[
\bigoplus_{i\in \{0,\dots, n\}}\alpha_i\otimes f_i,
\]
is attained at least twice for each $x\in\mathbb{R}$. Otherwise, the tropical meromorphic functions are said to be tropically linearly independent.
\end{definition}

Interestingly, we observe that our concept of tropical linear dependence in Definition \ref{D3.1} is different from the linearly dependence in the Gondran-Minoux sense due to Korhonen-Tohge \cite{6}: functions $(f_0,\dots,f_n)$ are said to be linearly dependent if there exist two disjoint subsets \( I \) and \( J \) of \( K := \{0, \dots, n\} \) such that \( I \cup J = K \) and
\[
\bigoplus_{i \in I} a_i \otimes f_i = \bigoplus_{j \in J} a_j \otimes f_j,
\]
that is,
\[
\max_{i \in I} \{a_i + f_i\} = \max_{j \in J} \{a_j + f_j\},
\]
where the constants \( a_0, a_1, \dots, a_n \in \mathbb{T} \) are not all equal to \( 0_{\mathbb{T}} \). Clearly, for two tropical meromoprhic functions, the two definitions are equivalent. It is easy to see that if tropical meromorphic functions $f_0, \dots, f_n$ are linearly dependent in the Gondran-Minoux sense, then they are tropically linearly dependent in our sense. We give an example for (more than two) tropical meromorphic functions to illustrate that the inverse relationship may not be true.\par

\begin{figure}
    \centering
    \subfloat{\label{Figure1}}
    \includegraphics[width=0.45\linewidth]{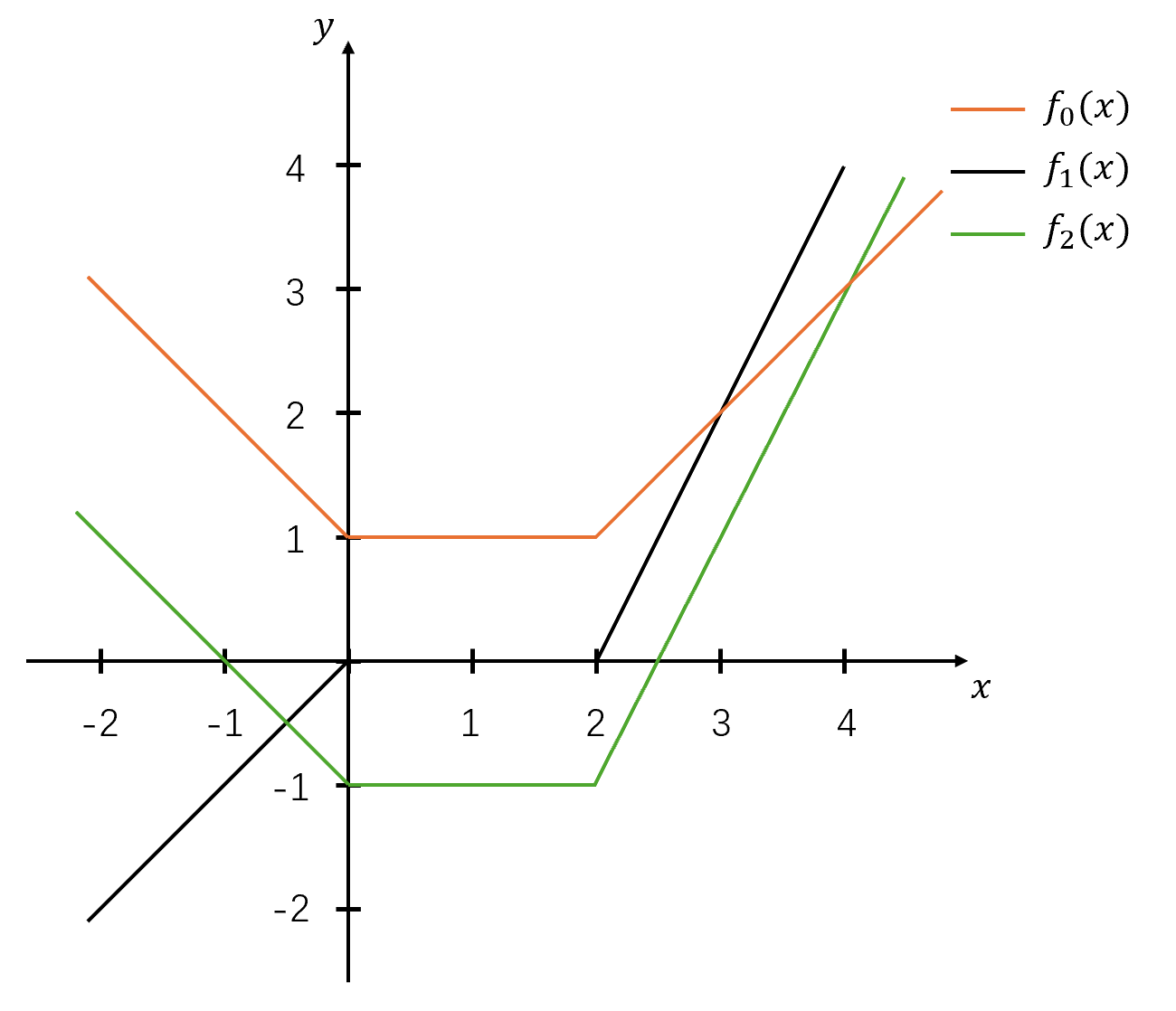}
    \quad
    \subfloat{\label{Figure2}}
    \includegraphics[width=0.45\linewidth]{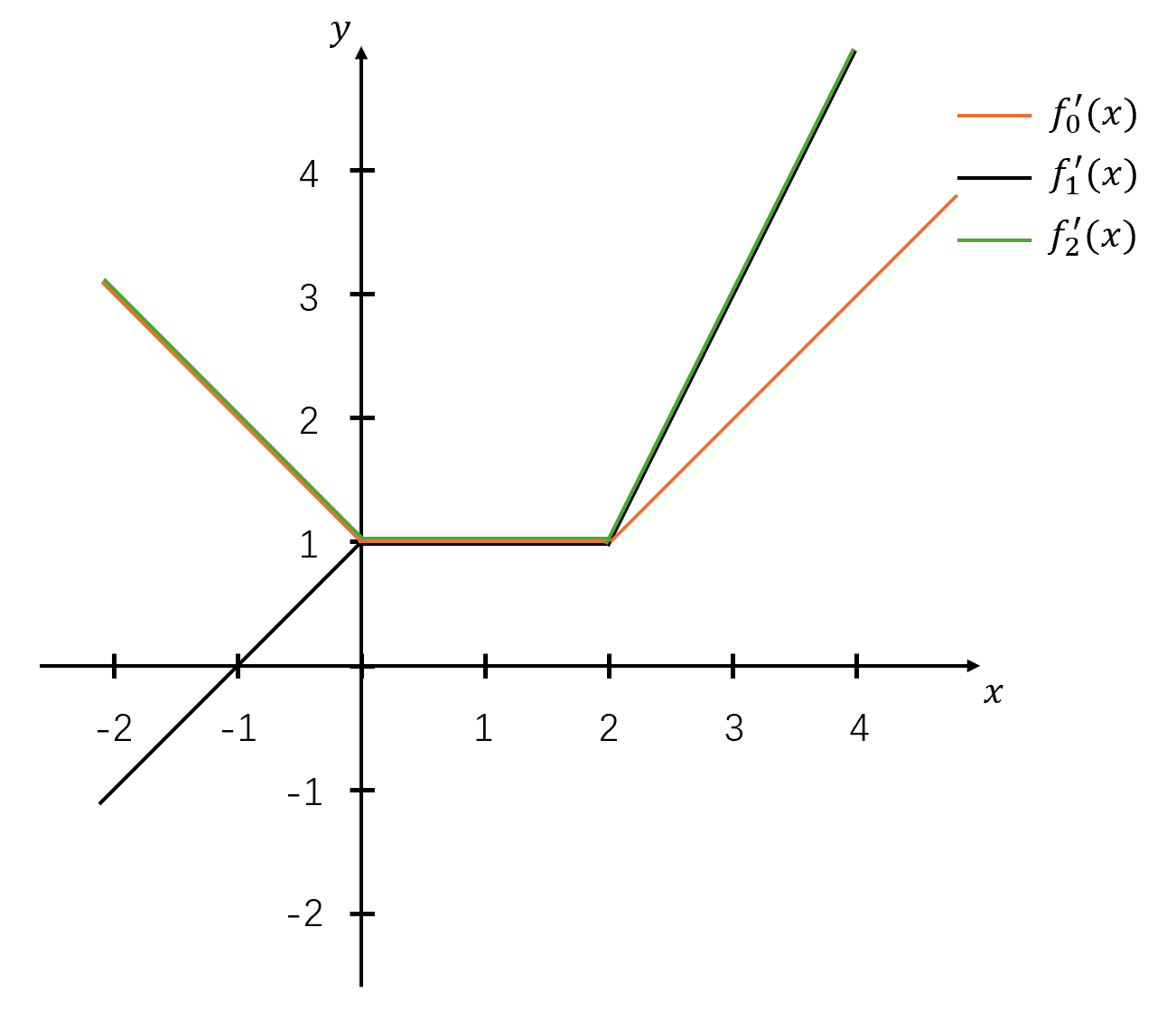}
    \caption{Tropical meromorphic functions in Example \ref{E2.4}}
    \label{fig:image}
\end{figure}

\begin{example}\label{E2.4} Take three tropical meromorphic functions $$f_0(x)=\max\{-x+1, 1, x-1\},$$
\[
f_1(x)=
\begin{cases}
x, & x < 0, \\
0, & 0\leq x\leq 2, \\
2x-4, & x>2,
\end{cases}
\]
and \[
f_2(x)=
\begin{cases}
-x-1, & x < 0, \\
-1, & 0\leq x\leq 2, \\
2x-5, & x>2.
\end{cases}
\]
 From the left hand of Figure \ref{fig:image}, it is clear that none of them can be expressed in terms of the other two functions, so that they are linearly independent in the Gondran-Minoux sense. Denote $\alpha_0:=0, \alpha_1:=1, \alpha_2:=2$ and  $\alpha_i\otimes f_i:=f'_i.$  From the right hand of Figure \ref{fig:image}, one can see that whenever $x < 0$, $f'_0(x)$ and $f'_2(x)$ are equal and greater than $f'_1(x)$; whenever $0\leq x\leq 2$, $f'_0(x)$, $f'_1(x)$ and $f'_2(x)$ are equal; whenever $x > 2$, $f'_1(x)$ and $f'_2(x)$ are equal and greater than $f'_0(x)$.
This yields that the maximum in the expression $$\bigoplus_{i=0}^{2}\alpha_i\otimes f_i$$
is attained at least twice for all values of $x$. Hence, the three functions are tropically linearly dependent.
\end{example}

\begin{definition}\cite[Definition 2.2]{12}\label{D3.2} An $m\times m$-matrix $A\in M_{m\times m}(\mathbb{\mathbb{T}})$ is said to be \textit{tropically singular} if the maximum in the tropical determinant $|A|$ is attained at least twice, otherwise $A$ is called \textit{tropically nonsingular}.
\end{definition}

\begin{lemma}\cite[Lemma 5.1]{RGST-05}\label{L3.3} The matrix $A$ is tropically singular if and only if the $n+1$ points whose coordinates are the column vectors of $A$ lie on a tropical hyperplane in $\mathbb{TP}^{n}$.
\end{lemma}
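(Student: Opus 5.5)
The plan is to prove both directions through the assignment-problem description of the tropical determinant and its linear-programming dual. Write $A=(a_{ij})_{0\le i,j\le n}$. The columns $v_0,\dots,v_n$ lie on the tropical hyperplane with coefficient vector $c=(c_0,\dots,c_n)\in\mathbb{TP}^n$ exactly when, for every column $j$, the maximum $\max_i(c_i+a_{ij})$ is attained at least twice. So the statement says that $|A|$ has at least two maximizing permutations if and only if such a $c$ exists. First I would dispose of degenerate cases. If $|A|=0_{\mathbb{T}}$, then $A$ has no finite permutation term, and König's theorem gives an all-$0_{\mathbb{T}}$ block; putting $c_i=0$ on the rows of that block and $0_{\mathbb{T}}$ elsewhere gives a hyperplane. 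So assume $|A|\in\mathbb{R}$. By the duality theorem for the assignment problem there are real $u_i,v_j$ with $a_{ij}\le u_i+v_j$ for all $i,j$, and equality along every optimal permutation. Replacing $a_{ij}$ by $b_{ij}=a_{ij}-u_i-v_j$ and $c_i$ by $c_i+u_i$ changes neither condition: it is row and column scaling, and by the remark after the definition of $|A|$ this preserves singularity and the hyperplane condition. So we may assume $b_{ij}\le 0$, with zeros on all optimal permutations and $|B|=0$.

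For ``on a hyperplane $\Rightarrow$ singular'' I argue by contradiction. Suppose the optimal permutation $\pi$ is unique. By strict complementary slackness (Goldman--Tucker), the dual pair $(u,v)$ can be chosen so that $b_{ij}=0$ holds exactly on the entries $(i,\pi(i))$. Permuting columns, take $\pi=\mathrm{id}$. Let $c$ be any admissible coefficient vector, put $M=\max_i c_i$ (finite), and pick $i_0$ with $c_{i_0}=M$. In column $i_0$ we have $c_{i_0}+b_{i_0i_0}=M$, while for $i\neq i_0$ we get $c_i+b_{ii_0}\le M$, with equality only if $b_{ii_0}=0$, which is impossible. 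Hence the maximum in column $i_0$ is attained only once, a contradiction.

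For ``singular $\Rightarrow$ on a hyperplane'' I again choose $(u,v)$ in the relative interior of the dual optimal face. Then the zero set $Z$ of $B$ is exactly the union of the optimal permutations. Let $R$ be the set of rows whose partner column differs between two optimal permutations, and $C$ the corresponding columns. Because there are two distinct optimal permutations, $\sigma^{-1}\tau$ has a nontrivial cycle, so $R\neq\varnothing$. Each column of $C$ then contains at least two zeros of $Z$, lying in rows of $R$. The trial choice is $c_i=0$ for $i\in R$ and $c_i=0_{\mathbb{T}}$ otherwise. Then every column of $C$ attains its maximum $0$ at least twice.

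The main obstacle is the columns $j\notin C$. There the maximum of $b_{ij}$ over $i\in R$ could be strictly negative and attained only once. To handle this I would replace the constant choice on $R$ by a further potential adjustment. I would restrict to the submatrix on rows $R$ and all columns, and re-solve a dual problem that raises entries in columns outside $C$ until each meets a second maximizer, while keeping $c_i$ equal on each cycle of $\sigma^{-1}\tau$. If this becomes unwieldy, the fallback is induction on $n$: delete a column $j\notin C$ together with its uniquely matched row, apply the induction hypothesis to the singular minor, and then extend $c$ by choosing the deleted row's coefficient so that column $j$ attains its maximum twice. This extension is the step to check carefully, because raising $c$ on that row must not create a unique maximum in any other column.
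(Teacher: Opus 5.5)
Your direction ``on a hyperplane $\Rightarrow$ singular'' is sound. Strict complementarity puts the unique optimal permutation on the diagonal with $b_{ij}<0$ off it, and the column indexed by a row maximizing $c$ then has a unique maximum. (The paper gives no proof of this lemma; it only quotes it from Richter-Gebert, Sturmfels and Theobald, so your argument has to stand on its own.) The converse, however, is not established, and both remedies you sketch fail. Take, with indices $0,\dots,3$,
\[
B=\begin{pmatrix} 0&0&-10&-1\\ 0&0&-10&-5\\ -3&-3&0&-3\\ -3&-3&-1&0 \end{pmatrix}.
\]
Its zeros are exactly the union of its only two optimal permutations $\sigma=\mathrm{id}$ and $\tau=(0\,1)$. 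So it is already in your normalized form, with $R=C=\{0,1\}$.
\begin{itemize}
\item No admissible $c$ is supported on $R$: columns $0,1$ force $c_0=c_1$, and then column $3$ has the unique maximum $c_0-1$. So the first remedy fails.
\item Deleting column $3\notin C$ with its matched row $3$ leaves a singular minor whose admissible vectors are, up to scaling, exactly $(0,0,z)$ with $z\le-10$. Column $3$ then forces $c_3=-1$, after which column $2$ has the unique maximum $c_3+b_{32}=-2>-10$. So no solution of the minor extends, although $c=(0,0,-2,-1)$ works for $B$.
\end{itemize}
A solution for the whole matrix need not restrict to one for a minor. The extension step you flag is therefore not merely delicate; it is false for this choice of column. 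Deleting column $2$ happens to work here, but you give no rule for picking the column and no reason a good one exists.

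Your degenerate case has the same defect. For rows $(0_{\mathbb{T}},0_{\mathbb{T}},0)$, $(0_{\mathbb{T}},0_{\mathbb{T}},1)$, $(0,0,0)$, the K\"onig block is rows and columns $\{0,1\}$, and $c=(0,0,0_{\mathbb{T}})$ gives column $2$ the unique maximum $1$. The values of $c$ on the block rows must themselves solve a smaller instance of the problem on the columns outside the block.

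The missing idea is to take $c$ to be a column of the adjugate, which makes the LP normalization unnecessary. For $|A|$ finite, choose optimal $\sigma\ne\tau$ and a column $j_0$ with $\sigma^{-1}(j_0)\ne\tau^{-1}(j_0)$, and set $c_i=|A_{i,j_0}|$. For each $j$, $\max_i(c_i+a_{ij})$ is the Laplace expansion along column $j_0$ of the matrix $A^{(j)}$ obtained by putting column $j$ of $A$ in position $j_0$. Its $i$-th term is the best value of a permutation sending $i$ to $j_0$.
\begin{itemize}
\item For $j=j_0$ this is $|A|$, attained at the two rows $\sigma^{-1}(j_0)\ne\tau^{-1}(j_0)$.
\item For $j\ne j_0$, $A^{(j)}$ has two equal columns. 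If $\pi$ is optimal for it, so is $\pi$ followed by the transposition of columns $j$ and $j_0$, and these two permutations feed the terms of the distinct rows $\pi^{-1}(j_0)$ and $\pi^{-1}(j)$. (If this value is $0_{\mathbb{T}}$, every row attains it.)
\end{itemize}
Finally, $c$ is not identically $0_{\mathbb{T}}$, since $c_{\sigma^{-1}(j_0)}$ is finite. On the example above ($j_0=0$) this gives exactly $c=(0,0,-2,-1)$. It is the same adjugate mechanism that drives the Cramer bound, Theorem~\ref{The2.20}.
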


From the above, we can obtain the following lemma.\par

\begin{lemma}\label{L3.4} The matrix $A$ is tropically singular if and only if the column vectors of $A$ are tropically linearly dependent.
\end{lemma}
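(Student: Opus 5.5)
We plan to derive Lemma \ref{L3.4} directly from Lemma \ref{L3.3} by unwinding what it means for points to lie on a tropical hyperplane. Write $A=(a_{ij})_{0\le i,j\le n}$, and let $v_j=(a_{0j},\dots,a_{nj})^t$ denote its columns. By Lemma \ref{L3.3}, $A$ is tropically singular if and only if there is a tropical hyperplane $H$ containing every point $v_0,\dots,v_n$. By the definition of a tropical hyperplane, this means there is a coefficient vector $\alpha=(\alpha_0,\dots,\alpha_n)\in\mathbb{T}^{n+1}$, not all entries $0_{\mathbb{T}}$, such that for each $j$ the maximum $\max_{i}(\alpha_i+a_{ij})$ is attained at least twice.

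The key step is to match this with Definition \ref{D3.1}. In that definition the dependence coefficients multiply the vectors, and the maximum is taken coordinatewise. In the hyperplane condition, by contrast, the coefficients multiply the coordinates of each point. So the hyperplane condition says exactly that the \emph{rows} $w_0,\dots,w_n$ of $A$ satisfy the following: for every coordinate $j$, the maximum in $\bigoplus_i \alpha_i\otimes w_{ij}$ is attained at least twice. That is, the rows of $A$ are tropically linearly dependent.

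To pass from rows to columns, we use the property that the tropical determinant is invariant under transposition. Since $|A|=|A^t|$ as a maximum over the same set of permutation terms, with the identification $\pi\leftrightarrow\pi^{-1}$, the maximum in $|A|$ is attained at least twice if and only if the maximum in $|A^t|$ is. Hence $A$ is singular iff $A^t$ is singular. Applying the argument above to $A^t$, singularity of $A^t$ is equivalent to tropical dependence of the rows of $A^t$, which are the columns of $A$. Combining the two equivalences gives the lemma.

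The main obstacle is bookkeeping rather than substance. First, one must get the orientation right: which of rows or columns plays the role of the points, and which plays the role of the coefficients. Second, one must check the degenerate cases. A hyperplane coefficient vector may have some entries equal to $0_{\mathbb{T}}$; this matches Definition \ref{D3.1}, which also allows $\alpha_i=0_{\mathbb{T}}$ as long as not all are. A column equal to $(0_{\mathbb{T}},\dots,0_{\mathbb{T}})$ is not a point of $\mathbb{TP}^n$. In that case both sides hold trivially: the determinant is $0_{\mathbb{T}}$, attained by all permutations, so the convention counts it as singular; and taking $\alpha$ supported on that zero vector gives a dependence. So such columns should be handled separately before invoking Lemma \ref{L3.3}.
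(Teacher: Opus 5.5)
Your proposal is correct and is essentially the paper's argument: invariance of tropical singularity under transposition, Lemma \ref{L3.3}, and the observation that the columns of a matrix lying on a common tropical hyperplane with coefficient vector $\alpha$ is exactly tropical dependence of its rows in the sense of Definition \ref{D3.1}. The only difference is that the paper applies Lemma \ref{L3.3} to $A^t$ at once, rather than first working it out for $A$.

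One bookkeeping correction to your degenerate-case remark. Your final equivalence comes from applying Lemma \ref{L3.3} to $A^t$, whose columns are the rows of $A$. So the case that needs separate treatment is a zero \emph{row} of $A$, not a zero column. In that case, taking $\alpha$ supported on a single column does not give dependence of the columns. Instead it follows from the fact, stated in the paper, that any $n+1$ vectors of $\mathbb{T}^n$ are tropically linearly dependent: drop the zero coordinate, where every term equals $0_{\mathbb{T}}$ anyway.
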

\begin{proof} Assume that the matrix $A$ is tropically singular, then clearly, $A^t=(a_{j,i})$ is also tropically singular. By Lemma \ref{L3.3}, there exists a hyperplane $$H= \left\{x \in \mathbb{TP}^n \left| \,\,\max_{i=0}^{n}(\alpha_i \otimes x_i) \text{ is attained at least twice}\right.\right\},$$ such that the column vectors of $A^t=(a_{j,i})$ lie on the tropical hyperplane, i.e. for every $0\leq j\leq n$, the maximum in the expression
\[
\bigoplus_{i=0}^n\alpha_i\otimes a_{ji},
\]
is attained at least twice. Hence, by Definition
\ref{D3.1}, the column vectors of $A$ are tropically linearly dependent.\par

On the contrary, assume that the column vectors of $A$ are tropically linearly dependent, then there exist $\alpha_0,\dots,\alpha_n\in\mathbb{T}$, not all of them \( 0_{\mathbb{T}} \), such that for every $0\leq j\leq n$, the maximum in the expression
\[
\bigoplus_{i=0}^{n}\alpha_i\otimes a_{ji},
\]
is attained at least twice. Hence, the $n+1$ points whose coordinates are the row vectors $(a_{j0},\dots, a_{jn})$ (where $0\leq j\leq n)$ of $A$ lie on the hyperplane $$H= \left\{x \in \mathbb{TP}^n \left| \,\,\max_{i=0}^{n}(\alpha_i + x_i) \text{ is attained at least twice}\right.\right\}.$$  Hence, $A^t=(a_{j,i}),$ is tropically singular, and thus $A$ is also tropically singular.
\end{proof}

By Lemma \ref{L3.3}, Lemma \ref{L3.4}, we can obtain the following conclusion.\par

\begin{lemma}\label{C2.13} Any $q$ tropical hyperplanes $H_1, \dots, H_q$ in $\mathbb{TP}^{n}$ are in general position if and only if their coefficient vectors $a_{\mu(0)}, \dots, a_{\mu(n)}$ are tropically linearly independent for any injective map $\mu : \{0, 1, \dots, n\} \to \{1, \dots, q\}$.
\end{lemma}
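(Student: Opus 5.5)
The plan is to translate the geometric condition on intersections into a statement about tropical singularity of the $(n+1)\times(n+1)$ coefficient matrix, and then apply Lemma \ref{L3.4}. Fix an injective $\mu:\{0,\dots,n\}\to\{1,\dots,q\}$. Let $A_\mu$ be the $(n+1)\times(n+1)$ matrix whose rows are the coefficient vectors $a_{\mu(0)},\dots,a_{\mu(n)}$, so its columns are their transposes. The statement then reduces to the equivalence
\[
H_{\mu(0)}\cap\dots\cap H_{\mu(n)}\neq\varnothing \iff \text{the vectors } a_{\mu(0)},\dots,a_{\mu(n)} \text{ are tropically linearly dependent},
\]
taken for every such $\mu$.

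First I unwind the left side. A point $x=(x_0,\dots,x_n)\in\mathbb{TP}^n$ lies in every $H_{\mu(k)}$ exactly when, for each $k$, the maximum $\max_i(a_{\mu(k),i}+x_i)$ is attained at least twice. Here $x$ is not identically $0_{\mathbb{T}}$.

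By Definition \ref{D3.1}, applied with $x_i$ as the scalars $\alpha_i$, this says precisely that the $n+1$ column vectors of $A_\mu$ are tropically linearly dependent. These column vectors are $(a_{\mu(0),i},\dots,a_{\mu(n),i})^t$ for $i=0,\dots,n$, each indexed by $k$. The translation is a direct match of definitions, with the roles of rows and columns swapped appropriately. Lemma \ref{L3.4} then says this holds if and only if $A_\mu$ is tropically singular.

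Next I use that tropical singularity is invariant under transposition, which holds because $|A|=|A^t|$ and the permutation terms correspond bijectively. So $A_\mu$ is singular if and only if $A_\mu^t$ is. Applying Lemma \ref{L3.4} to $A_\mu^t$, this is in turn equivalent to the columns of $A_\mu^t$, namely the vectors $a_{\mu(0)},\dots,a_{\mu(n)}$, being tropically linearly dependent.

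Taking contrapositives and quantifying over all injective $\mu$ gives the lemma. The intersection is empty for every $\mu$ exactly when the corresponding coefficient vectors are independent for every $\mu$. Alternatively, one could invoke Lemma \ref{L3.3} directly for one direction.

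The main obstacle is bookkeeping with entries equal to $0_{\mathbb{T}}=-\infty$. This affects both the coefficient vectors and the point $x$, which may have $-\infty$ coordinates.

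I will check that Lemma \ref{L3.4} and Definition \ref{D3.1} are used consistently over $\mathbb{T}$ rather than $\mathbb{R}$. The key point is that a maximum equal to $-\infty$ counts as attained at least twice, which is consistent on both sides of the equivalence. I will also confirm that the "not all $0_{\mathbb{T}}$" requirement on the scalars matches the requirement $x\in\mathbb{TP}^n$.
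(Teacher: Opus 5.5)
Your proof is correct and is essentially the paper's argument, which just cites Lemmas \ref{L3.3} and \ref{L3.4}. A common point $x\in H_{\mu(0)}\cap\dots\cap H_{\mu(n)}$ is precisely a dependence witness for the columns of $A_\mu$ (equivalently, the points $a_{\mu(k)}$ lie on the hyperplane with coefficient vector $x$), and the row/column symmetry of tropical singularity transfers this to dependence of $a_{\mu(0)},\dots,a_{\mu(n)}$; you get that symmetry from Lemma \ref{L3.4} plus $|A|=|A^t|$, while the paper uses Lemma \ref{L3.3} for one step, which amounts to the same thing.
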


\subsection{Tropical Cramer theorem}

In this section, we will provide an upper bound for solutions of tropical linear equations. Additionally, we will present a relationship between determinants in tropical matrix multiplication. These results will be used to prove our tropical second main theorem with tropical hyperplanes in subsection 4.1. First, we introduce Cramer's theorem for tropical linear equations, which is very useful for our results  (for details, see references \cite{Aki-09,RGST-05}).\par

Cramer's theorem for tropical linear equations is shown as follows. We denote by $B_i$ the $i$th Cramer matrix of $(A, b)$, obtained
by replacing the $i$th column of $A$ with $b$. The $i$th Cramer permanent is defined as $|B_i|$. \par

\begin{lemma}\cite[Corollary 6.9]{Aki-09}\label{C3.5} Let $b, x\in\mathbb{T}^{n+1}$ and $A=(a_{ij})\in\mathcal{M}_{n+1 \times n+1}(\mathbb{T})$. Assume that for every row index $0\leq i\leq n$, the maximum in the expression
\begin{equation}\label{eq2.3}
\bigoplus_{j=0}^n(a_{ij}\otimes x_{j}\oplus b_{i})
\end{equation}
is attained at least twice. Then, for all $0\leq i\leq n$, if we expand $|B_i|$ and $|A|$ in
\begin{equation}\label{eq2.4}
|A|\otimes x_{i}\oplus |B_i|,\end{equation}
the maximum is attained at least twice in the global expression. Moreover, if $A$ is tropically nonsingular and if every Cramer matrix $B_{i}$ is tropically nonsingular or $|B_i|=0_{\mathbb{T}},$ then $\hat{x}:=(|B_i|-|A|)_{0\leq i\leq n}$ is the unique vector $x\in\mathbb{T}^{n+1}$ such that the maximum in Expression \eqref{eq2.3} is attained at least twice, for every $0\leq i\leq n$.
\end{lemma}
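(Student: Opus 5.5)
The plan is to reduce everything to permutation expansions of determinants of the augmented matrix $M=[A\,|\,b]$, an $(n+1)\times(n+2)$ matrix. I will prove the two assertions of Lemma \ref{C3.5} separately: the balancing statement for \eqref{eq2.4}, and then uniqueness and existence of $\hat x$ under the nonsingularity hypotheses. Throughout I use the max-plus conventions of Section 2 and the elementary properties of the tropical determinant (invariance under transposition and reordering of rows or columns, expansion along a row).

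\emph{First part.} Fix $i$ and put $S=\sum_{j\neq i}x_j$, assuming first that all $x_j$ are finite; coordinates equal to $0_{\mathbb T}$ are handled by deleting the corresponding columns and passing to a limit. Adding the constant $S$ to $|A|\otimes x_i\oplus|B_i|$, every term becomes the weight of a matching. The rows $0,\dots,n$ are matched injectively into the column set $\{0,\dots,n,\ast\}$, and the edge weights are $w_{kj}=a_{kj}+x_j$ and $w_{k\ast}=b_k$. Matchings omitting $\ast$ are the terms of $|A|\otimes x_i$, and matchings omitting column $i$ are the terms of $|B_i|$ (shifted by $S$). The hypothesis \eqref{eq2.3} says exactly that in every row $k$ the maximum of $w_{k\cdot}$ is attained at least twice.

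I would then argue by LP duality for the assignment problem. Take an optimal matching $\mu$ omitting column $c_0\in\{i,\ast\}$, with dual potentials $u_k$ (rows) and $v_j$ (columns), normalised so that the free column has $v=0$. Consider the tight-edge graph. Every row has at least two maximal entries, and I will use this to find an alternating path from $c_0$ to the other omitted column, or an alternating cycle; either one yields a second optimal matching among the admissible ones. This exchange argument is the step I expect to be the main obstacle, because tightness for the dual potentials is not the same as row-maximality. If the direct combinatorics becomes unwieldy, the fallback is the symmetrized max-plus semiring of Akian--Gaubert--Guterman. There ``maximum attained twice'' becomes the balance relation $M\binom{x}{0}\,\nabla\,0$, and the transfer principle gives $\operatorname{adj}(A)\otimes A\,\nabla\,|A|\,I$. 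Multiplying the balance relation by $\operatorname{adj}(A)$ then gives $|A|x_i\,\nabla\,|B_i|$ in the unsigned, fully expanded form required.

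\emph{Uniqueness.} Suppose $A$ is tropically nonsingular and each $B_i$ is nonsingular or has $|B_i|=0_{\mathbb T}$. Then in the expansion of $|A|\otimes x_i$ the maximum is attained by a single permutation, and likewise for $|B_i|$. If $|A|+x_i\neq|B_i|$, the larger side would have to contain the maximum twice by itself, which is impossible. Hence $x_i=|B_i|-|A|$, and when $|B_i|=0_{\mathbb T}$ this forces $x_i=0_{\mathbb T}$.

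\emph{Existence.} Fix a row $k$ and form the $(n+2)\times(n+2)$ matrix obtained by stacking the row $(a_{k0},\dots,a_{kn},b_k)$ on top of $M$. It has two identical rows, so every maximising permutation can be composed with the transposition of those rows, and its determinant is attained at least twice. Expanding along the top row gives $\bigoplus_j a_{kj}\otimes|B_j|\oplus b_k\otimes|A|$, attained twice as a global expansion; here $|B_j|$ arises as the minor obtained by deleting column $j$ of $M$, after reordering columns, which does not change the determinant. I still need to pass from ``attained twice in the global expansion'' to ``attained twice among the $n+2$ summands''. For this, the uniqueness of the maximising permutations in $|A|$ and in each $|B_j|$ ensures that two distinct optimal permutations of the big matrix land in distinct summands. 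Dividing by $|A|$ then shows that $\hat x$ satisfies \eqref{eq2.3} in row $k$.
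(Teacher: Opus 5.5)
The paper gives no proof of this lemma; it is quoted from Akian--Gaubert--Guterman, so your proposal has to stand on its own. Your uniqueness step and your existence step are correct. For existence, the duplicated row, the expansion along it, and the uniqueness of the optimal permutations of $A$ and the $B_j$ force the maximum into distinct summands, and the all-$0_{\mathbb{T}}$ case is trivial. Uniqueness, however, rests on the first part, and that is where the gap is: neither of your two routes proves it.

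The duality route is not carried out. You note yourself that tight edges for an optimal dual need not be row-maximal, and no alternating path or cycle is actually produced.

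The fallback contains a false inference. A side point first: ``maximum attained twice'' corresponds to ghostness in Izhakian's extended semiring, not to balance in the symmetrized semiring, where $a\oplus a=a$. More seriously, even in the right semiring, multiplying $Ax\oplus b\,\nabla\,0$ by $A^{\mathrm{adj}}$ only gives
\[
\Bigl(A^{\mathrm{adj}}\otimes(Ax\oplus b)\Bigr)_i=|A|\otimes x_i\ \oplus\ |B_i|\ \oplus\ \bigoplus_{j\neq i}x_j\otimes|A^{(i\leftarrow j)}|\ \nabla\ 0,
\]
where $A^{(i\leftarrow j)}$ is $A$ with column $i$ replaced by column $j$. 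Each extra term has its maximum attained twice, because of the two equal columns. These terms can dominate, and balance is not transitive, so they cannot be discarded.

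Concretely, take $n=2$, $a_{k0}=b_k=-10$, $a_{k1}=a_{k2}=0$ for all $k$, $x=(0,0,0)$, and $i=0$. The hypothesis holds and $|A|\otimes x_0=|B_0|=-10$, but the extra terms equal $0$. The balance you derive is then carried entirely by them and says nothing about $|A|\otimes x_0\oplus|B_0|$.

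The missing idea is to reduce to a square matrix by bordering. Let $N$ be the $(n+2)\times(n+2)$ matrix whose first rows are $(a_{k0},\dots,a_{kn},b_k)$ for $0\le k\le n$. Its last row has $1_{\mathbb{T}}$ in column $i$, $x_i$ in the last column, and $0_{\mathbb{T}}$ elsewhere.

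The coefficients $(x_0,\dots,x_n,1_{\mathbb{T}})$ make the maximum in every row of $N$ attained twice. Expanding $|N|$ along its last row produces exactly the terms of $|B_i|$ and of $x_i\otimes|A|$, plus $0_{\mathbb{T}}$ terms. So the first part is precisely the direction ``columns tropically dependent $\Rightarrow$ tropically singular'' of Lemma~\ref{L3.4}, for entries in $\mathbb{T}$. This also makes your limiting argument for coordinates $x_j=0_{\mathbb{T}}$ unnecessary.

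That direction is where LP duality genuinely enters, used by contradiction rather than by an exchange:
\begin{itemize}
\item Suppose $N$ had a unique optimal permutation $\sigma$. Strict complementary slackness for the assignment LP gives real $u_r,v_c$ with $N_{rc}-u_r-v_c=0$ for $c=\sigma(r)$ and $N_{rc}-u_r-v_c<0$ otherwise.
\item Let $\lambda$ be the dependence coefficients, put $\lambda'_c=\lambda_c+v_c$, and take $c^*$ with $\lambda'_{c^*}$ maximal.
\item Row $\sigma^{-1}(c^*)$ then attains its maximum only in column $c^*$, which contradicts the dependence.
\end{itemize}
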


Clearly, every family of $n+2$ vectors of $\mathbb{T}^{n+1}$ is tropically linearly dependent.\par

When A is tropically nonsingular, by Lemma \ref{C3.5}, we can obtain an upper bound for the solution, as follows.\par

\begin{theorem}\label{The2.20}[Cramer system's upper bound] Let $b, x\in\mathbb{T}^{n+1}$ and $A=(a_{ij})\in\mathcal{M}_{(n+1) \times (n+1)}(\mathbb{T}).$  Assume that for every row index $0\leq i\leq n$, the maximum in the expression
\begin{equation}\label{eq6}
\bigoplus_{j=0}^n(a_{ij}\otimes x_{j}\oplus b_{i})
\end{equation}
is attained at least twice. If A is tropically nonsingular, then for the solution \(x=(x_0, \dots, x_n)\) satisfying equation \eqref{eq6}, we have $x_i\leq (A^{\mathrm{adj}}b)_i-|A|$ for all \(0\leq i\leq n\).
\end{theorem}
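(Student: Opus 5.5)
The plan is to reduce the statement to the first half of the tropical Cramer theorem, Lemma \ref{C3.5}, and then to use nonsingularity of $A$ to rule out the case where $x_i$ is too large.

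First identify the right-hand side. Expanding the $i$th Cramer matrix $B_i$ along its $i$th column (which is $b$) gives
\[
|B_i|=\bigoplus_{j=0}^n b_j\otimes |A_{j,i}|=\bigoplus_{j=0}^n [A^{\mathrm{adj}}]_{i,j}\otimes b_j=(A^{\mathrm{adj}}b)_i .
\]
Here we use the row/column minor expansion of the tropical determinant and the definition $[A^{\mathrm{adj}}]_{i,j}=|A_{j,i}|$. So the claim becomes $x_i\otimes|A|\le |B_i|$, that is, $x_i\le |B_i|-|A|$.

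Next, note that tropical nonsingularity forces $|A|\in\mathbb{R}$. If $|A|=0_{\mathbb{T}}$, then every permutation term equals $-\infty$, so the maximum is attained more than once (for $n\ge1$), and $A$ would be singular. Hence the subtraction $-|A|$ is meaningful. If $x_i=0_{\mathbb{T}}$ there is nothing to prove, so we may assume $x_i\in\mathbb{R}$.

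Now apply Lemma \ref{C3.5}, whose hypothesis is exactly \eqref{eq6}. It says that in the global expansion of $|A|\otimes x_i\oplus|B_i|$ the maximum is attained at least twice. The terms of this expansion are of two kinds:
\begin{itemize}
\item the terms $x_i+a_{0,\pi(0)}+\dots+a_{n,\pi(n)}$ for $\pi\in S_{n+1}$;
\item the permutation terms of $|B_i|$.
\end{itemize}
Since $A$ is nonsingular, exactly one $\pi$ gives the value $|A|$, and every other $\pi$ gives a strictly smaller value. So among the terms of the first kind, exactly one equals $x_i+|A|$ and the rest are strictly smaller.

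Suppose, for contradiction, that $x_i+|A|>|B_i|$. Every term of $|B_i|$ is at most $|B_i|$, hence strictly below $x_i+|A|$. Then the global maximum equals $x_i+|A|$ and is attained only by the single optimal term of $|A|\otimes x_i$. This contradicts Lemma \ref{C3.5}, so $x_i+|A|\le|B_i|$ for every $i$, which is the desired bound.

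The only delicate step is making precise the phrase ``expand $|B_i|$ and $|A|$'' in Lemma \ref{C3.5}, namely that the global maximum is taken over the individual permutation monomials. With that reading, uniqueness of the optimal permutation of $A$ does all the work. A degenerate case, $|B_i|=0_{\mathbb{T}}$, is handled by the same argument: it forces $x_i=0_{\mathbb{T}}$, which is consistent with the bound read in $\mathbb{T}$.
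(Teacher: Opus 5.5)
Your proof is correct, and it reaches the bound by a cleaner route than the paper's. Both arguments start from the first (balance) part of Lemma \ref{C3.5}, and both identify $|B_i|=(A^{\mathrm{adj}}b)_i$ by expanding $B_i$ along its $i$th column.

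The paper then splits into two cases:
\begin{itemize}
\item When $B_i$ is nonsingular or $|B_i|=0_{\mathbb{T}}$, it invokes the uniqueness clause of Lemma \ref{C3.5} to get $x_i=|B_i|-|A|$. That clause is stated under the hypothesis that \emph{every} Cramer matrix is nonsingular or has zero determinant, so applying it one index at a time is not literally covered by the lemma.
\item When $B_i$ is singular, it observes that every $x_i\le |B_i|-|A|$ makes the maximum in $|A|\otimes x_i\oplus|B_i|$ attained twice. This shows the bound is sufficient for the balance condition, not that it is necessary, which is what the theorem claims.
\end{itemize}

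Your single contradiction argument supplies exactly the necessity step the paper leaves out. If $x_i+|A|>|B_i|$, then the unique optimal permutation of $A$ gives the only monomial achieving the global maximum, contradicting Lemma \ref{C3.5}. This works uniformly in $i$, needs no case split on $B_i$, and uses only the balance part of Lemma \ref{C3.5}, never its uniqueness clause.

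Your side remarks also settle points the paper leaves implicit: nonsingularity forces $|A|\in\mathbb{R}$ for $n\ge1$, so the subtraction is meaningful, and the case $x_i=0_{\mathbb{T}}$ is trivial.
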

\begin{proof}
By Lemma \ref {C3.5}, we can obtain that the maximum value of $|A|\otimes x_{i}\oplus |B_i|$ is attained at least twice for all $0\leq i\leq n$. Next, we will discuss the values of the component coordinates of the solution for the cases where the Cramer matrix $B_{i}$ is tropically singular and tropically nonsingular, respectively.\par

Case 1: if the Cramer matrix $B_{i}$ is tropically nonsingular or $|B_i|=0_{\mathbb{T}},$ then since A is tropically nonsingular, by Lemma \ref {C3.5}, we have $x_i:=(|B_i|-|A|).$\par

Case 2: if the Cramer matrix $B_{i}$ is tropically singular, then by Definition \ref{D3.2}, we have $|B_i|$'s maximum is attained at least twice. Since A is tropically nonsingular, it is easy to find that for any $x_i\in\mathbb{T}$, as long as $x_i\leq |B_i|-|A|$, the maximum value of $|A|\otimes x_{i}\oplus |B_i|$ is attained at least twice.\par

Hence, every component of the solution \(x=(x_0, \dots, x_n)\) satisfying equation \eqref{eq6} must have $x_i\leq |B_i|-|A|$ for all \(0\leq i\leq n\).

Below we only need to verify that $(A^{\mathrm{adj}}b)_i=|B_i|.$ Indeed, we have
\[|B_i|=
\begin{vmatrix}
a_{00} & a_{01} & \dots &b_0  & \dots& a_{0n} \\
a_{10} & a_{11} & \dots &b_1  & \dots& a_{1n} \\
\vdots & \vdots & \ddots & \vdots & \ddots& \vdots \\
a_{n0} & a_{n1} & \dots &b_n  & \dots& a_{nn}
\end{vmatrix}=b_0\otimes A_{0i}\oplus b_1\otimes A_{1i}\oplus\dots \oplus b_n\otimes A_{ni}
\] and thus
\[A^{\mathrm{adj}}b=
\begin{bmatrix}
A_{00} & A_{10} & \dots & A_{n0} \\
A_{01} & A_{11} & \dots & A_{n1} \\
\vdots & \vdots & \ddots & \vdots \\
A_{0n} & A_{1n} & \dots & A_{nn}
\end{bmatrix}\otimes
\begin{bmatrix}
b_0  \\
b_1  \\
\vdots\\
b_n
\end{bmatrix}=\begin{bmatrix}
|B_0|  \\
|B_1|  \\
\vdots\\
|B_n|
\end{bmatrix}.
\] The theorem is thus proved.
\end{proof}

We give an example to explain the above result.\par

\begin{example}\label{E3.9} Consider the following tropical linear equation, where we require that the maximum value of
\[\bigoplus
\begin{bmatrix}
0 & -1 & 1 \\
0 & 0  & 2 \\
0 & 1  & 0
\end{bmatrix}
\begin{bmatrix}
x_1  \\
x_2  \\
x_3
\end{bmatrix}\bigoplus
\begin{bmatrix}
0  \\
-1 \\
1
\end{bmatrix}
\]
is attained at least twice. Here, $|A|=3$ (A is tropically nonsingular),
\[|B_1|=
\begin{vmatrix}
0 & -1 & 1 \\
-1& 0  & 2 \\
1 & 1  & 0
\end{vmatrix}=0\oplus3\oplus(-2)\oplus2\oplus1\oplus2=3
\]($B_1$ is tropically nonsingular), and
\[|B_2|=
\begin{vmatrix}
0 & 0  & 1 \\
0 & -1 & 2 \\
0 & 1  & 0
\end{vmatrix}=3,
|B_3|=
\begin{vmatrix}
0 & -1 & 0 \\
0 & 0  & -1 \\
0 & 1  & 1
\end{vmatrix}=1\oplus0\oplus0\oplus(-2)\oplus1\oplus0=1
\]($B_2$ is tropically nonsingular, $B_3$ is tropically singular). We can obtain the solution to the tropical linear equation as $x_1=0, x_2=0, x_3\leq -2,$ and it is obvious that it satisfies $x_i\leq |B_i| - |A| =(A^{\mathrm{adj}}b)_i-|A|$ for all \(0\leq i\leq 2\).
\end{example}

In linear algebra, a key result states that for square matrices A and B of the same order, $\det(AB)=\det(A)\det(B)$. However, if A and B are two tropical square matrices, there is generally no such simple and direct relationship between the tropical determinant of their product \(C = A\otimes B\) and the tropical determinants of A and B as in the case of conventional matrix multiplication. Interestingly, we have found that there is still an inequality relationship. We will present this relationship.\par

\begin{theorem}\label{T3.12}[The Relationship of Determinants in tropical matrix product] Let $A=(a_{ij}), B=(b_{ij}), C=(c_{ij})$ be $n\times n$ matrices with entries in $\mathbb{T}$, such that $A\otimes B = C$, then $|A|\otimes|B|\leq |A\otimes B|= |C|$.
\end{theorem}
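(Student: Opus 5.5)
Write out the tropical determinant of the product using the definitions,
\[
|A\otimes B|=\bigoplus_{\pi\in S_n}\bigotimes_{i}c_{i,\pi(i)},\qquad c_{i,\pi(i)}=\bigoplus_{k}a_{ik}\otimes b_{k,\pi(i)} .
\]
In ordinary notation this is $\max_{\pi}\sum_i \max_k (a_{ik}+b_{k\pi(i)})$. The aim is to exhibit one term of this maximum that already equals $|A|+|B|$. Since every term is at most $|A\otimes B|$, this proves the inequality. The equality $|A\otimes B|=|C|$ holds by definition of $C$, so only the inequality needs proof.

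First assume $|A|$ and $|B|$ are finite, so $|A|\neq 0_{\mathbb{T}}$ and $|B|\neq 0_{\mathbb{T}}$. If either equals $0_{\mathbb{T}}=-\infty$, then $|A|\otimes|B|=-\infty$ and there is nothing to prove. Choose permutations $\sigma$ and $\tau$ attaining the maxima:
\[
|A|=\sum_i a_{i\sigma(i)},\qquad |B|=\sum_k b_{k\tau(k)}.
\]
Put $\pi=\tau\circ\sigma$. In the $\pi$-term, for each row $i$, take the particular index $k=\sigma(i)$ inside the inner maximum. This gives
\[
c_{i,\pi(i)}\ \ge\ a_{i\sigma(i)}+b_{\sigma(i),\tau(\sigma(i))}.
\]

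Now sum over $i$. Because $\sigma$ is a bijection, $\sum_i b_{\sigma(i),\tau(\sigma(i))}=\sum_k b_{k\tau(k)}=|B|$. Hence
\[
|C|\ \ge\ \sum_i c_{i\pi(i)}\ \ge\ |A|+|B|,
\]
which in tropical notation is $|A|\otimes|B|\le |A\otimes B|$.

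The only points needing care are the reindexing step and the convention that $-\infty$ entries are absorbed under $+$. With the degenerate case $0_{\mathbb{T}}$ handled separately at the start, neither causes trouble, so I expect no real obstacle. In tropical language, the argument is the observation that expanding the product of sums in $|A\otimes B|$ produces the term $\bigotimes_i a_{i\sigma(i)}\otimes b_{\sigma(i)\tau\sigma(i)}$ as one of the maximands.
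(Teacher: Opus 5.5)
Your proof is correct and is essentially the paper's argument. The paper also fixes maximizing permutations for $|A|$ and $|B|$ and composes them; its $k_l=j^{-1}(i(l))$ is your $\tau(\sigma(l))$, because it indexes the optimal term of $B$ by columns. It then bounds each $c_{l,k_l}$ below by the single term with middle index $i(l)$ and reindexes the $B$-factors using bijectivity. Your separate treatment of the case $|A|=0_{\mathbb{T}}$ or $|B|=0_{\mathbb{T}}$ is a harmless addition that the paper leaves implicit.
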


\begin{proof}
We expand the formula $A\otimes B = C$ as
\[
\begin{bmatrix}
a_{11} & a_{12} & \dots & a_{1n} \\
a_{21} & a_{22} & \dots & a_{2n} \\
\vdots & \vdots & \ddots & \vdots \\
a_{n1} & a_{n2} & \dots & a_{nn}
\end{bmatrix}\bigotimes
\begin{bmatrix}
b_{11} & b_{12} & \dots & b_{1n} \\
b_{21} & b_{22} & \dots & b_{2n} \\
\vdots & \vdots & \ddots & \vdots \\
b_{n1} & b_{n2} & \dots & b_{nn}
\end{bmatrix}=
\]
\[
\begin{bmatrix}
\oplus_{i=1}^n(a_{1i}\otimes b_{i1}) & \oplus_{i=1}^n(a_{1i}\otimes b_{i2}) & \dots & \oplus_{i=1}^n(a_{1i}\otimes b_{in}) \\
\oplus_{i=1}^n(a_{2i}\otimes b_{i1}) & \oplus_{i=1}^n(a_{2i}\otimes b_{i2}) & \dots & \oplus_{i=1}^n(a_{2i}\otimes b_{in}) \\
\vdots & \vdots & \ddots & \vdots \\
\oplus_{i=1}^n(a_{ni}\otimes b_{i1}) & \oplus_{i=1}^n(a_{ni}\otimes b_{i2}) & \dots & \oplus_{i=1}^n(a_{ni}\otimes b_{in})
\end{bmatrix}.
\]

Assume that $|A|$ and $|B|$ reach their maximum value at $(a_{1,i(1)}\otimes\dots \otimes a_{n,i(n)})$ and $(b_{j(1),1}\otimes\dots \otimes b_{j(n),n}),$ respectively, where $\{i(1), \dots, i(n)\}$ and $\{j(1), \dots, j(n)\}$ are permutations of $\{1,\dots,n\}.$\par

We define
\[
k_l := j^{-1}(i(l)) \quad \text{for } l=1,2,\dots,n.
\]
Since $\{j(1), \dots, j(n)\}$ is a permutation of $\{1,\dots,n\}$, $\{k_l, \dots, k_n\}$ is also a permutation of $\{1,\dots,n\}$. Then for each $l$, we have
\[
a_{l,i(l)}\otimes b_{j(k_l),k_l}
\leq \bigoplus_{i=1}^n (a_{l,i}\otimes b_{i,k_l})
= c_{l,k_l}.
\]

It follows that
\[
|A|\otimes|B|
= \bigotimes_{l=1}^n a_{l,i(l)} \otimes \bigotimes_{l=1}^n b_{j(l),l}
= \bigotimes_{l=1}^n \left(a_{l,i(l)}\otimes b_{j(k_l),k_l}\right)
\leq \bigotimes_{l=1}^n c_{l,k_l}.
\]
Since $|C|$ is the maximum tropical product over all permutations, we obtain
\[
\bigotimes_{l=1}^n c_{l,k_l} \leq |C|.
\]
Therefore,
\[
|A|\otimes|B| \leq |C|=|A\otimes B|,
\]
which completes the proof.
\end{proof}

An example is given to show Theorem  \ref{T3.12}.\par

\begin{example}\label{E3.13} Let $A$, $B$, and $C$ be the following tropical matrices:
\[
\begin{bmatrix}
1 & 2 & 0 \\
1 & 0 & 1 \\
0 & 1 & 1
\end{bmatrix}
\bigotimes
\begin{bmatrix}
-1 & 1 & -1 \\
0 & 0 & 1 \\
1 & 2 & 1
\end{bmatrix}=
\begin{bmatrix}
2 & 2 & 3 \\
2 & 3 & 2 \\
2 & 3 & 2
\end{bmatrix}
\]
Then $|A|=4, |B|=3, |C|=8$ and $|A|\otimes|B|<|C|=|A\otimes B|$.
\end{example}

\section{Second main theorem}
In this section, we give the proofs of Theorem \ref{T4.1.10} and Theorem \ref{T4.1.2}. Theorem \ref{T4.1.10} uses the tropical logarithmic derivative lemma with minimal hypertype (Lemma \ref{T2.7}), which is closer to the classical Cartan form. Theorem \ref{T4.1.2} avoids any growth condition at the cost of a stronger ramification term (sum of component counting functions), and holds for all r without exceptional sets.

\subsection{Second main theorem with growth condition}
The main goal of this subsection is to prove the second main theorem for tropical hyperplanes in general position with growth condition (Theorem \ref{T4.1.10}).

Recall that Cao and Zheng \cite{7} gave a definition of linear nondegeneracy for tropical holomorphic curve as follows.

\begin{definition}\cite[Definition 3.1]{7}\label{D4.1.3} Let $f = [f_0:f_1:\dots:f_n]:\mathbb{R}\to\mathbb{TP}^n$ be a tropical holomorphic curve. If for any tropical hyperplane $H$ in $\mathbb{TP}^n$, $f(\mathbb{R})$ is not a subset of $H$, then we say that $f$ is tropically linearly nondegenerate.
\end{definition}

Next, we will present the identical relationship between tropical linear nondegeneracy and tropical linear independence.\par

\begin{proposition}\label{P4.1.5} A tropical holomorphic curve $f:\mathbb{R}\to\mathbb{TP}^n$ with reduced representation $f=(f_0,\dots,f_n)$ is tropically linearly nondegenerate if and only if $f_0,\dots,f_n$ are tropically linearly independent.
\end{proposition}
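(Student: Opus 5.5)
The plan is to unwind both definitions and see that they say literally the same thing about the set of points $f(x)$, $x\in\mathbb{R}$. By Definition \ref{D4.1.3}, $f$ is \emph{degenerate} exactly when there is a tropical hyperplane $H$, given by $P(x)=\bigoplus_{i=0}^n a_i\otimes x_i$ with coefficient vector $a=(a_0,\dots,a_n)\in\mathbb{TP}^n$ (so not all $a_i=0_{\mathbb{T}}$), such that $f(\mathbb{R})\subset H$. By Definition \ref{D2.3}, $f_0,\dots,f_n$ are tropically linearly \emph{dependent} exactly when there are $\alpha_0,\dots,\alpha_n\in\mathbb{T}$, not all $0_{\mathbb{T}}$, such that $\max_i(\alpha_i+f_i(x))$ is attained at least twice for every $x\in\mathbb{R}$. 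I will prove the equivalence of the negations, i.e. degeneracy $\Leftrightarrow$ dependence, by taking $\alpha_i=a_i$ in one direction and $a_i=\alpha_i$ in the other.

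First, check that membership in $H$ is well defined on $\mathbb{TP}^n$. If $(y_0,\dots,y_n)$ is replaced by $(\lambda+y_0,\dots,\lambda+y_n)$, every term $a_i+y_i$ shifts by $\lambda$. Hence whether the maximum is attained at least twice does not depend on the chosen representative. Consequently, for each $x$, the condition $f(x)=[f_0(x):\dots:f_n(x)]\in H$ is equivalent to the maximum of $a_i+f_i(x)$ over $i$ being attained at least twice, computed with the reduced representation $\mathbf{f}$.

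Second, handle $0_{\mathbb{T}}$ entries. Since the $f_j$ are tropical entire functions without common roots defining a map into $\mathbb{TP}^n$, the vector $(f_0(x),\dots,f_n(x))$ is never identically $0_{\mathbb{T}}$. The maximum is therefore finite as soon as it is taken over indices with $a_i\neq 0_{\mathbb{T}}$ and $f_i(x)\neq0_{\mathbb{T}}$. One subtle point needs a remark: if every term $a_i+f_i(x)$ equals $-\infty$ at some $x$, one must decide whether "attained twice" holds. In the setting of Definition \ref{D1.11} the $f_i$ are real-valued, so each $f_i(x)\in\mathbb{R}$ and this cannot happen. I will note this, and then the two pointwise conditions coincide for every $x$.

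Putting these together: $f(\mathbb{R})\subset H$ holds iff for all $x$ the maximum of $a_i\otimes f_i(x)$ is attained at least twice, iff $f_0,\dots,f_n$ are tropically linearly dependent with coefficients $a$. Taking negations gives the proposition. The only real obstacle is the bookkeeping at the second step, namely making sure the nonvanishing requirements on the coefficient vector ($a\in\mathbb{TP}^n$ versus "not all $0_{\mathbb{T}}$") match and that projective equivalence does not affect the "attained twice" condition. Both are immediate from the shift-invariance observation.
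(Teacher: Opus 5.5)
Your proposal is correct and is essentially the paper's argument. Both identify the coefficient vector of a hyperplane $H\supset f(\mathbb{R})$ with the coefficients witnessing tropical linear dependence; the paper argues each direction by contradiction, while you prove the equivalence of the negations directly. Your extra bookkeeping (invariance under projective rescaling, and matching the nonvanishing conditions on $a$ and $\alpha$) is harmless. The all-$0_{\mathbb{T}}$ case also needs no real-valuedness assumption: both definitions use literally the same pointwise ``attained at least twice'' condition, so any convention applies to both sides alike.
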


\begin{proof}
Assume that $f=(f_0,\dots,f_n)$ is tropically linearly nondegenerate, this means that for any tropical hyperplane $H$ in $\mathbb{TP}^n$, $f(\mathbb{R})\not\subset H$. Now if $f_0,\dots,f_n$ are tropically linearly dependent, then there exist $\alpha_0,\dots,\alpha_n\in\mathbb{T}$, not all of them $0_{\mathbb{T}}$, such that the maximum in the expression
\[
\bigoplus_{i=0}^n\alpha_i\otimes f_i,
\]
is attained at least twice. Hence, we have a tropical hyperplane \[H = \{x \in \mathbb{TP}^n \mid \max_{i=0}^n(\alpha_i + x_i) \text{ is attained at least twice}\}\] such that $f(\mathbb{R})\subset H$. We obtain a contradiction. Hence $f_0,\dots,f_n$ must be tropically linearly independent.

Now assume that $f_0,\dots,f_n$ are tropically linearly independent, If $f$ is tropically linearly degenerate, then there exists a hyperplane \[H= \{x \in \mathbb{TP}^n \mid \max_{i=0}^n(\alpha_i + x_i) \text{ is attained at least twice}\}\] in $\mathbb{TP}^n$ such that $f(\mathbb{R})\subset H$. This means that for all $x\in\mathbb{R}$, $\max_{i=0}^n(\alpha_i + f_i(x)) \text{ is attained at least twice}$, hence $(f_0(x),\dots,f_n(x))$ $\in H$.
This contradicts the assumption that $f_0,\dots,f_n$ are tropically linearly independent. Hence $f$ is tropically linearly nondegenerate.
\end{proof}

We now recall the definitions and notations on Nevnalinna theory for tropical holomorphic curves \cite{7}. Let $f$ be a tropical holomorphic curve from $\mathbb{R}$ into tropical projective space $\mathbb{TP}^{n},$ $H$ be a tropical hyperplane in $\mathbb{TP}^{n}$ defined by a tropical linear polynomial $P.$ We use $a$ to denote the coefficient vector for the tropical hyperplane $H.$ The \textit{tropical proximity function} $m_f(r, H)$ of $f$ with respect to $H$ is defined as
$$m_f(r,H)=\frac{1}{2}\sum_{\sigma=\pm1}\lambda_{H}(f(\sigma r)),$$
where $\lambda_{H}(f(\sigma r))$ is the \textit{tropical Weil function} defined by
$$\lambda_{H}(f(x))=\frac{\|f(x)\|\otimes \|a\|}{P\circ f}\oslash.$$
Note that $P\circ f$ is a tropical entire function on $\mathbb{R}$ which thus doesn't have any pole. Hence by the tropical Jensen formula, we have
\[
N(r,\frac{1_{\mathbb{T}}}{P\circ f}\oslash )=\frac{1}{2}\sum_{\sigma=\pm1}P\circ f(\sigma r)-P\circ f(0).
\]
The following first main theorem for a tropical hyperplane can be easily deduced from the definitions of tropical characteristic function, counting function and approximation function.

\begin{theorem}\label{T4.1.6}[First Main Theorem]
If $f(\mathbb{R}) \not\subset H$, then
    \[
    m_f(r, H) + N(r,\frac{1_{\mathbb{T}}}{P\circ f}\oslash ) = T_f(r) + O(1).
    \]
\end{theorem}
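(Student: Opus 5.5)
The plan is to prove the First Main Theorem directly from the definitions by summing the proximity and counting functions and comparing with $T_f(r)$. By definition of the tropical Weil function,
\[
\lambda_H(f(x)) = \|f(x)\| + \|a\| - P\circ f(x),
\]
where $\|a\|=\max_i a_i$. This is a finite real number at every $x$ once $P\circ f(x)\neq 0_{\mathbb{T}}$.

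Averaging over $x=\pm r$ gives
\[
m_f(r,H)=\tfrac12\sum_{\sigma=\pm1}\|f(\sigma r)\| + \|a\| - \tfrac12\sum_{\sigma=\pm1}P\circ f(\sigma r).
\]

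For the counting function, $P\circ f=\max_i(a_i+f_i)$ is a maximum of finitely many tropical entire functions. It is therefore continuous, piecewise linear and convex, hence a tropical entire function with no poles. The tropical Jensen formula, already recorded before the statement, then gives
\[
N\Big(r,\tfrac{1_{\mathbb{T}}}{P\circ f}\oslash\Big)=\tfrac12\sum_{\sigma=\pm1}P\circ f(\sigma r)-P\circ f(0).
\]

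Adding the two identities, the terms $\frac12\sum_\sigma P\circ f(\sigma r)$ cancel. What remains is
\[
m_f(r,H)+N\Big(r,\tfrac{1_{\mathbb{T}}}{P\circ f}\oslash\Big)=\tfrac12\sum_{\sigma}\|f(\sigma r)\|+\|a\|-P\circ f(0)=T_f(r)+\|f(0)\|+\|a\|-P\circ f(0).
\]
The last three terms are constants independent of $r$, so they form the $O(1)$.

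The only real point to check is that $P\circ f$ is a genuine real-valued tropical entire function, i.e. not identically $0_{\mathbb{T}}$ and never equal to $-\infty$. Since not all $a_i$ equal $0_{\mathbb{T}}$, some $a_i$ is real, and each $f_i$ is real-valued. So $P\circ f(x)\ge a_i+f_i(x)>-\infty$ everywhere, and Jensen applies. In particular $P\circ f(0)$ is finite, so the constant is finite.

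The hypothesis $f(\mathbb{R})\not\subset H$ is not needed for the identity itself. It only guarantees that the statement is meaningful in the intended sense: $P\circ f$ is not a degenerate combination whose maximum is always attained twice. This hypothesis will matter for later uses of the theorem, and I would remark on this.

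I expect no serious obstacle. The whole proof is bookkeeping, and the step to handle carefully is matching the definition of $T_f(r)$, including the $-\|f(0)\|$ term, so that the constant is correctly identified.
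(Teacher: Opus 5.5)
Your proof is correct and is the argument the paper intends. Just before the statement, the paper records that Jensen's formula gives $N(r,\frac{1_{\mathbb{T}}}{P\circ f}\oslash)=\frac{1}{2}\sum_{\sigma=\pm1}P\circ f(\sigma r)-P\circ f(0)$, and it says the theorem follows from the definitions; that is exactly your cancellation, leaving the constant $\|f(0)\|+\|a\|-P\circ f(0)$.

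One correction to your closing remark. The paper tacitly allows components $f_i\equiv 0_{\mathbb{T}}$ (e.g.\ ``not all $f_j$ are identically equal to $0_{\mathbb{T}}$''), so ``each $f_i$ is real-valued'' is not automatic. The hypothesis $f(\mathbb{R})\not\subset H$ is exactly what rules out $P\circ f\equiv 0_{\mathbb{T}}$: otherwise the maximum $-\infty$ would be attained $n+1\geq 2$ times at every point. Once that case is excluded, your argument goes through verbatim.
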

Below we give a general second main theorem for tropical holomorphic curves intersecting arbitrary tropical hyperplanes.\par

\begin{theorem}\label{T4.1.7}[A General second main theorem with tropical hyperplanes] Assume that the tropical holomorphic curve \(f = [f_0 : \dots : f_n]: \mathbb{R}\to\mathbb{TP}^n\) is tropically linearly nondegenerate. Let \(H_1,\dots,H_q\) be arbitrary tropical hyperplanes in \(\mathbb{TP}^n\) defined by tropical linear polynomials $P_j$ ($j = 1,\dots,q$), respectively. If $\limsup_{r \to \infty} \frac{\log T_f(r)}{r}=0,$ then
\begin{equation*}\label{EE}
\begin{split}
\sum_{\sigma=\pm1}\max_{K}\frac{1}{2}\sum_{k\in K}\lambda_{H_{k}}(f(\sigma r))\leq(n + 1)T_{f}(r) - \sum_{j = 0}^{n}N(r, \frac{1_{\mathbb{T}}}{C(f)}\oslash) + o(T_f(r)),
\end{split}
\end{equation*}
where $r$ tends to infinity outside of a set of zero upper density measure. Here the maximum is taken over all subsets \(K\) of \(\{1,\dots,q\}\) such that \(a_{j}, j\in K\), are tropically linearly independent.
\end{theorem}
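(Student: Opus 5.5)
The plan is to prove a pointwise inequality at each $x=\sigma r$ for each admissible $K$, and then average over $\sigma=\pm1$ using the Jensen formula. The ramification term will come from comparing the tropical Casorati determinant of the pulled-back linear forms with $C(f)$ via Theorem \ref{T3.12}. I read the term $\sum_{j=0}^n N(r,\frac{1_{\mathbb{T}}}{C(f)}\oslash)$ in the statement as the single term $N(r,\frac{1_{\mathbb{T}}}{C(f)}\oslash)$, consistent with Theorem \ref{T4.1.10}.

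\textbf{Step 1: reduce to $|K|=n+1$.} Every family of $n+2$ vectors in $\mathbb{T}^{n+1}$ is dependent, so $|K|\le n+1$. If $|K|<n+1$, I complete $\{a_k\}_{k\in K}$ by auxiliary coefficient vectors $b_1,\dots,b_{n+1-|K|}$ so that the resulting $(n+1)\times(n+1)$ matrix $A_K$ is tropically nonsingular. For example, I would take suitably weighted unit vectors, with weights chosen large enough that a single permutation strictly dominates $|A_K|$.

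This completion costs nothing, because every Weil function is nonnegative: $P\circ f\le \|a\|\otimes\|f\|$ gives $\lambda_H(f(x))\ge 0$. Hence $\sum_{k\in K}\lambda_{H_k}\le\sum_{\text{rows of }A_K}\lambda$. Nonsingularity also forces $|A_K|\in\mathbb{R}$. There are only finitely many $K$, so all the constants $|A_K|$ and $\|b_i\|$ are uniformly bounded.

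\textbf{Step 2: the pointwise inequality.} Write $g_0,\dots,g_n$ for the linear forms $P\circ f$ attached to the rows of $A_K$. Let $F(x)=(f_j(x+ic))_{i,j}$ be the Casorati matrix of $f$. Then the Casorati matrix of $(g_0,\dots,g_n)$ equals $F\otimes A_K^t$, up to transposition and reordering, which do not change determinants. Theorem \ref{T3.12} then gives
\[
C(g_0,\dots,g_n)(x)\ \ge\ |A_K|+C(f)(x).
\]
I also compare $C(g)$ with the unshifted product:
\[
C(g)(x)-\sum_{k}g_k(x)\le L(x):=\sum_{k=0}^n\sum_{i=1}^{n}\bigl(g_k(x+ic)\oslash g_k(x)\bigr)^{+}.
\]
Combining the two,
\[
\sum_{k\in K}\lambda_{H_k}(f(x))\le (n+1)\|f(x)\|-C(f)(x)+L(x)+O(1),
\]
with $O(1)$ uniform in $K$. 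Taking the maximum over $K$ and bounding it by the sum over all (finitely many) $K$ of the $L$-terms, I average over $\sigma=\pm1$. By the Jensen formula for the entire function $C(f)$, $\frac12\sum_\sigma C(f)(\sigma r)=N(r,\frac{1_{\mathbb{T}}}{C(f)}\oslash)+C(f)(0)$. By definition, $\frac12\sum_\sigma\|f(\sigma r)\|=T_f(r)+O(1)$. What remains is a finite sum of terms of the form $m(r,g(x+ic)\oslash g(x))$.

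\textbf{Step 3: the error terms (main obstacle).} Here I apply Lemma \ref{T2.7} to each $g=P\circ f$. This requires $T(r,g)\le T_f(r)+O(1)$, which I would get from Lemma \ref{L2.6} or directly from Jensen, since $g\le \|a\|+\|f\|$ and $g$ is entire. It also requires that $g$ inherit the growth condition $\limsup \log T(r,g)/r=0$, which follows from the same comparison. Lemma \ref{T2.7} then gives $o(T_f(r))$ outside a set of zero upper density. A finite union of such sets still has zero upper density, and the shifts $ic$ with $i\le n$ are handled either by telescoping or by Lemma \ref{L2.8}.

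The delicate points I expect are two. First, verifying carefully that the shifted Casorati matrix really factors as a tropical matrix product, so that Theorem \ref{T3.12} applies with the correct orientation. Second, ensuring that $C(f)$ is a genuine (finite-valued) tropical entire function so that Jensen applies. I would check the second using the nondegeneracy of $f$ via Proposition \ref{P4.1.5}.
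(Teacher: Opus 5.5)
Your reading of $\sum_{j=0}^{n}N(r,\frac{1_{\mathbb{T}}}{C(f)}\oslash)$ as a single term matches the paper's proof, which ends with $2(n+1)T_f(r)-2N(r,\frac{1_{\mathbb{T}}}{C(f)}\oslash)$. Your skeleton is also the paper's:
\begin{itemize}
\item Theorem \ref{T3.12}, applied to the product of the Casorati matrix of $f$ and the coefficient matrix, gives $C(P_{\mu(0)}\circ f,\dots,P_{\mu(n)}\circ f)\ge |A_\mu|+C(f)$.
\item Jensen turns $\|f\|^{\otimes(n+1)}\oslash C(f)$ into the $T_f$ and $N(r,\frac{1_{\mathbb{T}}}{C(f)}\oslash)$ terms.
\item The logarithmic difference lemma handles the rest.
\end{itemize}

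You differ in the remainder $C(g)-\sum_k g_k$. The paper factors out $f_0$ and applies Lemma \ref{T2.7} to the quotients $(P_{\mu(j)}\circ f)\oslash f_0$, whose characteristic is at most $T_f(r)+O(1)$ by Lemma \ref{L2.6}. It must then treat the leftover shifts $f_0(x+ic)-f_0(x)$ separately via Jensen and Lemma \ref{L2.8}, with bookkeeping for the maximizing $\mu',\mu''$. You instead bound the remainder by $L_K=\sum_{k,i}(g_k(x+ic)-g_k(x))^{+}$ and use $\max_K L_K\le\sum_K L_K$. This is shorter and needs neither the $f_0$-term nor Lemma \ref{L2.8}. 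Your Step 1 also justifies the reduction to $\#K=n+1$, which the paper simply assumes.

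The price is that Lemma \ref{T2.7} now gives $o(T(r,g))$ for the entire function $g=P\circ f$ itself, and the comparison you cite, $T(r,g)\le T_f(r)+O(1)$, is false. Take $f=[x:2x]$, which is reduced and nondegenerate, and $P=x_1$. Then $T_f(r)=r/2$, while $g=2x$ has $T(r,g)=m(r,g)=r$.
\begin{itemize}
\item Jensen only gives $N(r,\frac{1_{\mathbb{T}}}{g}\oslash)\le T_f(r)+O(1)$, and $m(r,1_{\mathbb{T}}\oslash g)$ can be of order $r$.
\item Lemma \ref{L2.6} does not help, since $1_{\mathbb{T}}$ is not a tropical combination of the $f_j$.
\end{itemize}
The repair is short:
\begin{itemize}
\item Convexity of $\|f\|$ gives $\|f(x)\|\ge\|f(0)\|+sx$ for a subgradient $s$ at $0$, hence $T(r,g)\le T_f(r)+|s|r+O(1)$.
\item For nondegenerate $f$, $T_f(r)\ge cr-C$ with $c>0$. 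If $\|f\|$ had no corner it would be affine, forcing all $f_j$ to be affine with a common slope, so $f$ would be constant.
\end{itemize}
Hence $T(r,g)=O(T_f(r))$. This gives both the growth hypothesis of Lemma \ref{T2.7} and $o(T(r,g))=o(T_f(r))$.

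A smaller point in Step 1: large weights cannot by themselves force a unique maximizing permutation. After completing with weighted unit vectors, uniqueness is equivalent to nonsingularity of some $\#K\times\#K$ minor of the original rows, and establishing that needs the tropical-rank theorem. However, you only use $|A_K|\in\mathbb{R}$. That follows from independence by a K\"onig-type matching argument, together with the fact that any $k+1$ vectors in $\mathbb{T}^{k}$ are tropically dependent.
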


\begin{proof}
Let $H_1,\dots,H_q$ be the given tropical hyperplanes with coefficient vectors $a_1,\dots,a_q$ in $\mathbb{TP}^n$. \(K\subset \{1,\dots,q\}\) be a subset such that \(a_{k}, k\in K\), are tropically linearly independent. Without loss of generality, we assume that \(q\geq n + 1\) and \(\#K=n + 1\). Let \(T\) be the set of all injective maps \(\mu:\{0,1,\dots,n\}\to\{1,\dots,q\}\) such that \(a_{\mu(0)},\dots,a_{\mu(n)}\) are tropically linearly independent. Then
\begin{equation}\label{eq7}
\begin{split}
&\sum_{\sigma=\pm1}\max_{K}\sum_{k\in K}\lambda_{H_{k}}(f(\sigma r))=\sum_{\sigma=\pm1}\max_{\mu \in T}\sum_{j = 0}^{n}\frac{\|f(\sigma r)\|\otimes \|a_{\mu(j)}\|}{P_{\mu(j)}\circ f(\sigma r)}\oslash\\
\leq&\sum_{\sigma=\pm1}\max_{\mu \in T}\sum_{j = 0}^{n}\left(\frac{\|f(\sigma r)\|}{P_{\mu(j)}\circ f(\sigma r)}\oslash\right)+\sum_{\sigma=\pm1}\max_{\mu \in T}\sum_{j = 0}^{n}\|a_{\mu(j)}\|\\
=&\sum_{\sigma=\pm1}\max_{\mu \in T}\left(\frac{\|f(\sigma r))\|^{\otimes n + 1}}{\bigotimes_{j = 0}^{n}P_{\mu(j)}\circ f(\sigma r)}\oslash\right)+O(1)\\
=&\sum_{\sigma=\pm1}\max_{\mu \in T}\left(\frac{|C(P_{\mu(0)}\circ f(\sigma r),\dots,P_{\mu(n)}\circ f(\sigma r))|}{\bigotimes_{j = 0}^{n}P_{\mu(j)}\circ f(\sigma r)}\oslash\right.\\
&\left.\bigotimes\frac{\|f(\sigma r))\|^{\otimes n + 1}}{|C(P_{\mu(0)}\circ f(\sigma r),\dots,P_{\mu(n)}\circ f(\sigma r))|}\oslash\right)+O(1)\\
\leq&\sum_{\sigma=\pm1}\max_{\mu \in T}\frac{|C(P_{\mu(0)}\circ f(\sigma r),\dots,P_{\mu(n)}\circ f(\sigma r))|}{\bigotimes_{j = 0}^{n}P_{\mu(j)}\circ f(\sigma r)}\oslash\\
&+\sum_{\sigma=\pm1}\frac{\|f(\sigma r))\|^{\otimes n + 1}}{|C(f_{0},\dots,f_{n})|}\oslash+O(1),
\end{split}
\end{equation}
where $C(P_{\mu(0)}\circ f(x),\dots,P_{\mu(n)}\circ f(x))$ denotes the tropical Casoratian of $P_{\mu(0)}\circ f(x),\dots,P_{\mu(n)}\circ f(x)$.

In the above, apply Theorem \ref{T3.12} to the following expression,
\[
\begin{bmatrix}
a_{\mu(0),0} & a_{\mu(0),1} & \dots & a_{\mu(0),n} \\
a_{\mu(1),0} & a_{\mu(1),1} & \dots & a_{\mu(1),n} \\
\vdots & \vdots & \ddots & \vdots \\
a_{\mu(n),0} & a_{\mu(n),1} & \dots & a_{\mu(n),n}
\end{bmatrix}\bigotimes
\begin{bmatrix}
f_0(x) & f_0(x+c) & \dots & f_0(x+nc) \\
f_1(x) & f_1(x+c) & \dots & f_1(x+nc) \\
\vdots & \vdots & \ddots & \vdots \\
f_n(x) & f_n(x+c) & \dots & f_n(x+nc)
\end{bmatrix}=
\]
\[
\begin{bmatrix}
\oplus_{i=0}^n(a_{\mu(0),i}+f_i(x)) & \oplus_{i=0}^n(a_{\mu(0),i}+f_i(x+c)) & \dots & \oplus_{i=0}^n(a_{\mu(0),i}+f_i(x+nc)) \\
\oplus_{i=0}^n(a_{\mu(1),i}+f_i(x)) & \oplus_{i=0}^n(a_{\mu(1),i}+f_i(x+c)) & \dots & \oplus_{i=0}^n(a_{\mu(1),i}+f_i(x+nc)) \\
\vdots & \vdots & \ddots & \vdots \\
\oplus_{i=0}^n(a_{\mu(n),i}+f_i(x)) & \oplus_{i=0}^n(a_{\mu(n),i}+f_i(x+c)) & \dots & \oplus_{i=0}^n(a_{\mu(n),i}+f_i(x+nc))
\end{bmatrix}.
\]
Then we obtain
$$C(f_{0},\dots,f_{n})\leq C(P_{\mu(0)}\circ f,\dots,P_{\mu(n)}\circ f)+C,$$
where $C$ is a constant.\par

First, we deal with the first term on the right side of \eqref{eq7}. Denote
\[
L_{\mu}:=\frac{C(P_{\mu(0)}\circ f,\dots,P_{\mu(n)}\circ f)}{\bigotimes_{j = 0}^{n}P_{\mu(j)}\circ f}\oslash.
\]
Since $f$ is tropically linearly
nondegenerate, not all $f_j$ are identically equal to $0_{\mathbb{T}}.$ Then by Lemma \ref{LL2.3} (iv), we have
\[
\begin{split}
\tilde{L_{\mu}}:=&\frac{C(P_{\mu(0)}\circ f,\dots,P_{\mu(n)}\circ f)}{f_0 \otimes \overline{f_0} \otimes \dots \otimes \overline{f_0}^{[n]}}\bigotimes\frac{f_0\otimes\dots\otimes f_0}{P_{\mu(0)}\circ f\otimes\dots\otimes P_{\mu(n)}\circ f}\\
=&\frac{C((P_{\mu(0)}\circ f)\oslash f_0,\dots, (P_{\mu(n)}\circ f)\oslash f_0)}{((P_{\mu(0)}\circ f)\oslash f_0)\otimes\dots\otimes ((P_{\mu(n)}\circ f)\oslash f_0))}\oslash.
\end{split}
\]
Hence we have the following relationship
\[
L_{\mu}=\tilde{L_{\mu}}\otimes A,
\] where $$A:=\frac{f_0 \otimes \overline{f_0} \otimes \dots \otimes \overline{f_0}^{[n]}}{f_0\otimes\dots\otimes f_0}.$$

For $\sigma=+1$ or $-1,$ there exist two injective maps $\mu^{'}$ and $\mu^{''}$  \(:\{0,1,\dots,n\}\to\{1,\dots,q\}\) satisfying
\[
\begin{split}
L_{\mu^{'}}(r):&=\max_{\mu\in T}\frac{|C(P_{\mu(0)}\circ f(r),\dots,P_{\mu(n)}\circ f(r))|}{\bigotimes_{j = 0}^{n}P_{\mu(j)}\circ f( r)}\oslash\\
&=\frac{|C(P_{\mu^{'}(0)}\circ f(r),\dots,P_{\mu^{'}(n)}\circ f(r))|}{\bigotimes_{j = 0}^{n}P_{\mu^{'}(j)}\circ f(r)}\oslash
\end{split}
\]
and
\[
\begin{split}
L_{\mu^{''}}(-r):&=\max_{\mu\in T}\frac{|C(P_{\mu(0)}\circ f(-r),\dots,P_{\mu(n)}\circ f(-r))|}{\bigotimes_{j = 0}^{n}P_{\mu(j)}\circ f(-r)}\oslash\\
&=\frac{|C(P_{\mu^{''}(0)}\circ f(-r),\dots,P_{\mu^{''}(n)}\circ f(-r))|}{\bigotimes_{j = 0}^{n}P_{\mu^{''}(j)}\circ f(-r)}\oslash,
\end{split}
\] respectively. By Lemma \ref{L2.3} (i),  we have
\begin{eqnarray}\label{eq3.2}
L&:=&\sum_{\sigma=\pm1}\max_{\mu \in T}\frac{|C(P_{\mu(0)}\circ f(\sigma r),\dots,P_{\mu(n)}\circ f(\sigma r))|}{\bigotimes_{j = 0}^{n}P_{\mu(j)}\circ f(\sigma r)}\oslash\nonumber\\&=&L_{\mu^{'}}(r)+L_{\mu^{''}}(-r)\nonumber\\
&=&(\tilde{L_{\mu^{'}}}\otimes A)(r)+(\tilde{L_{\mu^{''}}}\otimes A)(-r)\nonumber\\
&=&\tilde{L_{\mu^{'}}}(r)+\tilde{L_{\mu^{''}}}(-r)+\sum_{\sigma=\pm1}A(\sigma r)\nonumber\\
&\leq& \tilde{L_{\mu^{'}}}(r)\oplus \tilde{L_{\mu^{''}}}(r)+\tilde{L_{\mu^{'}}}(-r)\oplus \tilde{L_{\mu^{''}}}(-r)+\sum_{\sigma=\pm1}A(\sigma r)\nonumber\\
&=&\sum_{\sigma=\pm1}\left(\tilde{L_{\mu^{'}}}(\sigma r)\oplus \tilde{L_{\mu^{''}}}(\sigma r)\right)+\sum_{\sigma=\pm1}A(\sigma r)\nonumber\\
&=&\sum_{\sigma=\pm1}(\tilde{L_{\mu^{'}}}\oplus \tilde{L_{\mu^{''}}})^+(\sigma r)-\sum_{\sigma=\pm1}(-(\tilde{L_{\mu^{'}}}\oplus \tilde{L_{\mu^{''}}}))^+(\sigma r)+\sum_{\sigma=\pm1}A(\sigma r)\nonumber\\
&=&2m(r, \tilde{L_{\mu^{'}}}\oplus \tilde{L_{\mu^{''}}})-2m(r, 1_{\mathbb{T}}\oslash(\tilde{L_{\mu^{'}}}\oplus \tilde{L_{\mu^{''}}}))+\sum_{\sigma=\pm1}A(\sigma r)\nonumber\\
&\leq&2m(r, \tilde{L_{\mu^{'}}}\oplus \tilde{L_{\mu^{''}}})+\sum_{\sigma=\pm1}A(\sigma r)\nonumber\\
&\leq&2m(r, \tilde{L_{\mu^{'}}})+2m(r, \title{L_{\mu^{''}}})+\sum_{\sigma=\pm1}A(\sigma r).
\end{eqnarray}

Next, we estimate $m(r, \tilde{L_{\mu^{'}}})$ from below. Denote by
$$g^{'}_{i}(x):= P_{\mu(i)^{'}}\circ f(x), \quad g^{''}_{i}(x):= P_{\mu(i)^{''}}\circ f(x), \quad 0\leq i\leq n.$$ Since $g^{'}_{i}\oslash f_0$ are tropical meromorphic functions for all $j\in\{0, 1, \dots, n\}$, by Lemma \ref{L2.6}, we have\\
$$T_{g^{'}_{i}\oslash f_0}(r)\leq T_f(r)+ O(1).$$
This implies that
$$\limsup_{r\rightarrow\infty}\frac{\log T_{g^{'}_{i}\oslash f_0}(r)}{r}\leq\limsup_{r\rightarrow\infty}\frac{\log T_f(r)}{r}=0$$
holds for all $r\notin E$ with $\overline{dens}E = 0$. Note that $\tilde{L_{\mu^{'}}}$ consists purely of tropical sums and products of the form $(\overline{g^{'}_j}^{[m]} \oslash\overline{f_0}^{[m]})\oslash ({g^{'}_j}\oslash{f_0})$ where $ m \in \{0, 1, \dots, n\}$ and $j \in \{0, \dots, n\}$. Therefore,
\[
m(r, \tilde{L_{\mu^{'}}}) \leq \sum_{j=1}^n \sum_{m=0}^n m\left(r, (\overline{g^{'}_j}^{[m]} \oslash\overline{f_0}^{[m]})\oslash ({g^{'}_j}\oslash{f_0})\right).
\]
By Theorem \ref{T2.7}, we have
\[
\sum_{j=1}^n \sum_{m=0}^n m\left(r, (\overline{g^{'}_j}^{[m]} \oslash\overline{f_0}^{[m]})\oslash ({g^{'}_j}\oslash{f_0})\right)=o(T_f(r))
\]
for $r\notin E$ with $\overline{dens}E = 0$.
Hence,
\begin{equation}\label{eq3.3}
\begin{split}
m(r, \tilde{L_{\mu^{'}}})\leq o(T_f(r))
\end{split}\end{equation}
holds for all $r\notin E$ with $\overline{dens}E = 0$.

Similarly, we obtain
\begin{equation}\label{eq3.4}
\begin{split}
m(r, \tilde{L_{\mu^{''}}})\leq o(T_f(r))
\end{split}
\end{equation}
for all $r\notin E$ with $\overline{dens}E = 0$.

The next step is to estimate $\sum_{\sigma=\pm1}A(\sigma r)$. By the tropical Jensen formula, we have
\begin{equation}\label{eqq3.5}
\begin{split}
\sum_{\sigma=\pm1}A(\sigma r)&=\sum_{\sigma=\pm1}\left(\frac{f_0 \otimes \overline{f_0} \otimes \dots \otimes \overline{f_0}^{[n]}}{f_0\otimes\dots\otimes f_0}\oslash\right)(\sigma r)\\
&=2\sum_{j=1}^n\left(N(r, 1_{\mathbb{T}} \oslash \overline{f}_0^{[j]})-N(r, 1_{\mathbb{T}} \oslash {f}_0)\right)+O(1).
\end{split}
\end{equation}
Using the tropical Jensen formula again, we have
\[
\begin{split}
N(r, 1_{\mathbb{T}} \oslash f_0) &=\frac{1}{2}\sum_{\sigma = \pm 1}f_0(\sigma r)+O(1)\\
&\leq\frac{1}{2}\sum_{\sigma = \pm 1}\max\{f_0(\sigma r),\dots, f_n(\sigma r)\} +O(1)\\
&=T_f(r)+ O(1).
\end{split}
\]
This implies that
$$\limsup_{r\rightarrow\infty}\frac{\log N(r, 1_{\mathbb{T}}\oslash f_0)}{r}\leq\limsup_{r\rightarrow\infty}\frac{\log T_f(r)}{r}=0.$$ Therefore, by Lemma \ref{L2.8}, we have
\begin{equation}\label{eqq3.6}
\begin{split}
N\left(r, 1_{\mathbb{T}} \oslash \overline{f}_0^{[j]}\right) &\leq N(r + j|c|, 1_{\mathbb{T}} \oslash f_0)\\
&= N(r, 1_{\mathbb{T}} \oslash f_0)+\varepsilon(r)N(r, 1_{\mathbb{T}}\oslash f_0)\\
&= N(r, 1_{\mathbb{T}} \oslash f_0)+ o(T_f(r))
\end{split}
\end{equation}
for $r\notin E$ with $\overline{dens}E = 0$.
Now by \eqref{eqq3.5} and \eqref{eqq3.6}, we have
\begin{eqnarray}\label{Eqqmath}\sum_{\sigma=\pm1}A(\sigma r)\leq o(T_f(r)).\end{eqnarray}\par

Therefore, combining \eqref{eq3.2}, \eqref{eq3.3}, \eqref{Eqqmath} and \eqref{eq3.4}, we can obtain
\begin{equation}\label{eq3.5}
\sum_{\sigma=\pm1}\max_{\mu \in T}\frac{|C(P_{\mu(0)}\circ f(\sigma r),\dots,P_{\mu(n)}\circ f(\sigma r))|}{\bigotimes_{j = 0}^{n}P_{\mu(j)}\circ f(\sigma r)}\oslash\leq o(T_f(r))
\end{equation}
for all $r\notin E$ with $\overline{dens}E = 0.$\par

Finally, we deal with the second term on the right side of \eqref{eq7}. By the tropical Jensen formula again
\begin{equation}\label{eq3.6}
\begin{split}
&\sum_{\sigma=\pm1}\frac{\|f(\sigma r))\|^{\otimes n + 1}}{|C(f_{0},\dots,f_{n})|}\oslash \\
&=\sum_{\sigma=\pm1}\|f(\sigma r))\|^{\otimes n + 1}-\sum_{\sigma=\pm1}{|C(f_{0},\dots,f_{n})|} \\
&=2(n + 1)T_f(r) - 2N(r, \frac{1_{\mathbb{T}}}{|C(f_{0},\dots,f_{n})|}\oslash ) + O(1).
\end{split}
\end{equation}
Combining \eqref{eq7}, \eqref{eq3.5} and \eqref{eq3.6} implies \eqref{EE}. and thus the proof is completed.
\end{proof}

To prove Theorem \ref{T4.1.10}, we also need the following lemma.\par

\begin{lemma}\label{L3.7}[Tropical version of product to the sum estimate] Let $H_1, \dots, H_q$ be tropical hyperplanes in $\mathbb{TP}^n$ defined by tropical linear polynomials $P_j$ ($j = 1,\dots,q$), respectively, located in general position. Denote by $T$ the set of all injective map $\mu:\{0, 1, \dots, n\} \to \{1, \dots, q\}$. Then
$$\sum_{j = 1}^{q} m_f(r, H_j) \leq \sum_{\sigma=\pm1}\max_{\mu\in T}\frac{1}{2}\sum_{j = 0}^{n}\lambda_{H_{\mu_j}}(f(\sigma r)) + O(1).$$
\end{lemma}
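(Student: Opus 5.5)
The plan is to prove a pointwise estimate and then average over $\sigma=\pm1$. By definition,
\[
\sum_{j=1}^q m_f(r,H_j)=\frac12\sum_{\sigma=\pm1}\sum_{j=1}^q\lambda_{H_j}(f(\sigma r)).
\]
So it suffices to show the following: there is a constant $C$, depending only on the $H_j$, such that for every $x\in\mathbb{R}$ at most $n+1$ of the values $\lambda_{H_j}(f(x))$ exceed $C$. Once this holds, pick $\mu\in T$ whose image contains those indices (possible when $q\ge n+1$; for $q\le n$ the statement is trivial up to reading $T$ appropriately). Then
\[
\sum_{j}\lambda_{H_j}(f(x))\le \sum_{i=0}^n\lambda_{H_{\mu(i)}}(f(x))+qC,
\]
where we also use that each $\lambda_{H_j}$ is bounded below. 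This lower bound holds because $P_j(y)\le \|a_j\|+\|y\|$, so $\lambda_{H_j}\ge 0$. Taking the maximum over $\mu$ and summing over $\sigma$ then gives the lemma with $O(1)=qC$.

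The key step is the pointwise claim. Fix $x$ and set $y=f(x)$, normalized so that $\|y\|=0$ (the Weil function is invariant under projective scaling). Suppose, towards a contradiction, that for some injective $\mu$ we have $P_{\mu(i)}(y)\le -M$ for all $i=0,\dots,n$, with $M$ large. This says the vectors $a_{\mu(0)},\dots,a_{\mu(n)}$, tested against $y$, have all their terms $a_{\mu(i),k}+y_k\le -M$. Meanwhile some coordinate $y_{k_0}=0$, so for each $i$ the coefficient satisfies $a_{\mu(i),k_0}\le -M$. I would use the tropical Cramer estimate, Theorem \ref{The2.20}, or more directly a compactness argument based on Lemma \ref{C2.13}. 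Let $A$ be the matrix with rows $a_{\mu(i)}$; it is tropically nonsingular by general position and Lemma \ref{L3.4}. Consider the function
\[
\Phi(y)=\max_{i}\bigl(P_{\mu(i)}(y)\bigr)-\|y\|,
\]
which is continuous on the compact space $\mathbb{TP}^n$ (normalize $\|y\|=0$, so $y\in[-\infty,0]^{n+1}$ with some coordinate $0$). If $\Phi(y)=-\infty$ for some $y$, then every $P_{\mu(i)}(y)=-\infty$. In particular the support of $y$ avoids all finite entries of all the rows, and the tropical determinant computation forces $|A|=0_{\mathbb{T}}$, contradicting nonsingularity. Hence $\Phi$ attains a finite minimum $-M_\mu$. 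Taking $C=\max_\mu(M_\mu+\|a_{\mu}\|)$ over the finitely many $\mu$ shows that for any $n+1$ distinct indices, at least one has $\lambda_{H_j}(f(x))\le C$. This is exactly the pointwise claim.

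The main obstacle is the step ``$\Phi\equiv-\infty$ somewhere forces singularity,'' and more precisely the continuity of $\Phi$ at points with $-\infty$ coordinates. For continuity, I would note that $\max_i P_{\mu(i)}$ is a max of finitely many terms $a_{\mu(i),k}+y_k$ and is continuous into $[-\infty,\infty)$. For the singularity argument, let $S=\{k: y_k>-\infty\}\neq\varnothing$. Then $a_{\mu(i),k}=-\infty$ for all $i$ and all $k\in S$, so $A$ has a whole column equal to $0_{\mathbb{T}}$, and hence $|A|=0_{\mathbb{T}}$. Such a matrix is singular; equivalently, the columns are tropically dependent via the coefficient $1_{\mathbb{T}}$ on that zero column. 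One small point is to check that the paper's convention counts a determinant equal to $0_{\mathbb{T}}$ as tropically singular. This is consistent with Lemma \ref{L3.3}: a zero column places all the points on a tropical hyperplane, so the column case gives a contradiction with general position via Lemma \ref{C2.13}.
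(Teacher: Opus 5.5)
Your proof is correct. Its overall structure matches the paper's, but you obtain the key inequality by a different route.

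Both arguments rest on the bound $\|f(x)\|\le C_\mu+\max_{0\le i\le n}P_{\mu(i)}\circ f(x)$ for each $\mu\in T$, followed by the same combinatorial step. The paper orders the values $P_j\circ f(x)$ and takes $\mu$ to be the $n+1$ smallest. You instead count the indices with $\lambda_{H_j}(f(x))>C$ and cover them by the image of some $\mu$. Your argument in fact shows there are at most $n$ such indices, and the lower bound $\lambda_{H_j}\ge 0$ is never used.

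The paper gets the bound from the Cramer upper bound, Theorem \ref{The2.20}, applied with right-hand side $b_i=P_{\mu(i)}\circ f$. This yields an explicit constant built from $A_\mu^{\mathrm{adj}}$ and $|A_\mu|$, but it relies on the tropical Cramer theorem and on full nonsingularity of $A_\mu$.

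You get the bound from compactness of $\mathbb{TP}^n$, plus the observation that the only obstruction is a point where every $P_{\mu(i)}$ equals $0_{\mathbb{T}}$. Such a point forces a column of $A_\mu$ to be identically $0_{\mathbb{T}}$. This route is more elementary and self-contained. It also shows that far less than general position is needed: it suffices that every column of each $A_\mu$ contains a finite entry.

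Your opening observation can even be finished without compactness, giving an explicit constant:
\[
\max_i P_{\mu(i)}(y)=\max_k\bigl(y_k+\max_i a_{\mu(i),k}\bigr)\ge \|y\|+\min_k\max_i a_{\mu(i),k}.
\]
This is essentially what the paper itself does in the proof of Theorem \ref{T4.1.2}, via the maximizing permutation $\delta$ in \eqref{eq22}.

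Finally, you do not need Lemmas \ref{L3.4} and \ref{C2.13} to reach the contradiction. When $n\ge 1$, a point $y$ with $P_{\mu(i)}(y)=0_{\mathbb{T}}$ for all $i$ already lies in $H_{\mu(0)}\cap\dots\cap H_{\mu(n)}$, because every term of each $P_{\mu(i)}(y)$ equals $0_{\mathbb{T}}$ and so the maximum is attained at least twice.
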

\begin{proof}
Suppose that $a_j$ are the coefficient vectors of $H_j, (1\leq j\leq q)$ and then we have tropical entire functions
\[
P_j\circ f=a_{j0}\otimes f_0\oplus\dots\oplus a_{jn}\otimes f_n, 1\leq j\leq q.
\]
By Theorem \ref{The2.20}, we have
\[
f_i\leq \tilde{a}^{\mu(i)} _0\otimes P_{\mu(0)}(f)\oplus\dots\oplus \tilde{a}^{\mu(i)} _n\otimes P_{\mu(n)}(f)-|A_{\mu}|
\]
where $A^{\mathrm{adj}}_{\mu}=(\tilde{a}^{\mu(i)} _j)$. Thus,
for any $\mu\in T$,
\begin{equation*}
\begin{split}
\|f(x)\|=\max\{f_0(x),\dots,f_n(x)\}\leq C+\max_{0\leq i\leq n}\{P_{\mu(i)}\circ f\}.
\end{split}
\end{equation*}
For a given $x\in\mathbb{R}$, there is $\mu\in T$ such that
\[
0_{\mathbb{T}}<P_{\mu(0)}\circ f<\dots<P_{\mu(n)}\circ f<P_j\circ f
\]
for $j\neq\mu(i),i = 0,1,\dots,n$. Hence we have
\[
\bigotimes_{j = 1}^{q}\left(\frac{\|f(x)\|}{P_{j}\circ f}\oslash\right)\leq C+\left(\max_{\mu\in T}\left(\bigotimes_{i = 0}^{n}\frac{\|f(x)\|}{P_{\mu(i)}\circ f}\oslash\right)\right).
\]
The lemma is thus proved.
\end{proof}

\noindent{\bf Proof of Theorem \ref{T4.1.10}.}  Combining Theorem \ref{T4.1.7}, Lemma \ref{L3.7} with the first main theorem (Theorem \ref{T4.1.6}) implies immediately Theorem \ref{T4.1.10}.\qed

\subsection{Second main theorem without growth condition}

In this subsection, we give a second main theorem for tropical hyperplanes in general position in which the tropical holomorphic curve has no growth limitation. \par\vskip 6pt

\noindent {\bf Theorem \ref{T4.1.2}.}  \emph{Assume that the tropical holomorphic curve \(f = [f_0:\dots:f_n]:\mathbb{R}\to\mathbb{TP}^n\) is tropically linearly nondegenerate. If \(H_1,\dots,H_q\) are tropical hyperplanes in \(\mathbb{TP}^n\) in general position defined by tropical linear polynomials $P_j$ ($j = 1,\dots,q$), respectively,
then
\[
(q - n - 1)T_{f}(r)\leq\sum_{j = 1}^{q} N(r, \frac{1_{\mathbb{T}}}{P_j\circ f} \oslash )) -\sum_{j = 0}^{n}N(r, 1_{\mathbb{T}} \oslash f_j)+ O(1).
\]}\vskip 6pt

\begin{proof}
Let $H_1,\dots,H_q$ be the given tropical hyperplanes with coefficient vectors $a_1,\dots,a_q$ in $\mathbb{TP}^n$. Let \(T\) be the set of all injective map \(\mu:\{0, 1 , \dots , n \}\to\{1 , \dots , q \}\). Then
\begin{equation}\label{eq21}
\begin{split}
\sum_{\sigma=\pm1}\max_{\mu\in T}\sum_{j = 0}^{n}&\lambda_{H_{\mu(j)}}(f(\sigma r))\\
=&\sum_{\sigma=\pm1}\max_{\mu \in T}\sum_{j = 0}^{n}\frac{\|f(\sigma r)\|\otimes \|a_{\mu(j)}\|}{P_{\mu(j)}\circ f(\sigma r)}\oslash\\
\leq&\sum_{\sigma=\pm1}\max_{\mu \in T}\sum_{j = 0}^{n}\left(\frac{\|f(\sigma r)\|}{P_{\mu(j)}\circ f(\sigma r)}\oslash\right)+\sum_{\sigma=\pm1}\max_{\mu \in T}\sum_{j = 0}^{n}\|a_{\mu(j)}\|\\
=&\sum_{\sigma=\pm1}\max_{\mu \in T}\left(\frac{\|f(\sigma r))\|^{\otimes n + 1}}{\bigotimes_{j = 0}^{n}P_{\mu(j)}\circ f(\sigma r)}\oslash\right)+O(1).
\end{split}
\end{equation}

We now deal with the right side of \eqref{eq21}. Since \(H_1,\dots, H_q\) are in general position,   $$H_{\mu(0)}\cap \dots\cap H_{\mu(n)}=\varnothing$$ holds for any map \(\mu\in T\). Then by Theorem \ref{C2.13}, \( a_{\mu(0)}, \dots, a_{\mu(n)} \) are tropically linearly independent. Thus the tropical determinant $|A|$'s maximum is attained once, that is, there exists a unique permutation \(\{\delta(0),\dots,\delta(n)\}\) of \(\{0,1,\dots,n\}\), such that \(a_{\mu(0),\delta(0)}\otimes a_{\mu(1),\delta(1)}\otimes\dots\otimes a_{\mu(n),\delta(n)}\) attains the maximum (which is not \( 0_{\mathbb{T}} \)). Clearly, none of \(a_{\mu(0),\delta(0)}, a_{\mu(1),\delta(1)}, \dots, a_{\mu(n),\delta(n)}\) is equal to \( 0_{\mathbb{T}} \).
Hence, for every \( 0\leq j\leq n\), we have
\begin{eqnarray}\label{eq22}
P_{\mu(j)}\circ f&=& a_{\mu(j),0}\otimes f_0\oplus a_{\mu(j),1}\otimes f_1\oplus\dots a_{\mu(j),n}\otimes f_n\nonumber
\\&\geq& a_{\mu(j),\delta(j)}\otimes f_{\delta(j)}.
\end{eqnarray}
Then, for any given $x\in\mathbb{R}$,
\begin{equation}\label{eq23}
\begin{split}
\frac{\|f(x))\|^{\otimes n + 1}}{\bigotimes_{j = 0}^{n}P_{\mu(j)}\circ f(x)}\oslash=& \|f(x))\|^{\otimes n + 1}- \bigotimes_{j = 0}^{n}P_{\mu(j)}\circ f(x)\\
\leq&\|f(x))\|^{\otimes n + 1}- \bigotimes_{j = 0}^{n}a_{\mu(j),\delta(j)}\otimes f_{\delta(j)}(x)\\
=&\|f(x))\|^{\otimes n + 1}- \bigotimes_{j = 0}^{n}f_j(x)+C,
\end{split}
\end{equation} where $C$ is a constant. By the tropical Jensen formula, we have
\begin{equation}\label{eq211}N(r,1_{\mathbb{T}}\oslash f_j)=\frac{1}{2}\sum_{\sigma=\pm1}f_j(\sigma r)-f_j(0).\end{equation}
Due to the arbitrariness of \(\mu\) and $x$, combining \eqref{eq21}, \eqref{eq23} and \eqref{eq211}, we  obtain
\begin{equation}\label{eq24}
\begin{split}
\sum_{\sigma=\pm1}\max_{\mu\in T}\sum_{j = 0}^{n}\lambda_{H_{\mu(j)}}(f(\sigma r))\leq\sum_{\sigma=\pm1}\|f(\sigma r))\|^{\otimes n + 1}-2\sum_{j = 0}^{n}N(r,1_{\mathbb{T}}\oslash f_j)+ O(1).
\end{split}
\end{equation}\par

Next, we  claim that
\begin{equation}\label{eq25}
\begin{split}
\sum_{j = 1}^{q} 2m_f(r, H_j) \leq \sum_{\sigma=\pm1}\max_{\mu\in T}\sum_{j = 0}^{n}\lambda_{H_{\mu_j}}(f(\sigma r)) + O(1).
\end{split}
\end{equation} Indeed, by \eqref{eq22}, for any $\mu\in T$, there exists a constant $C$ such that
\begin{equation*}
\begin{split}
\|f(x)\|=\max\{f_0(x),\dots,f_n(x)\}\leq C+\max_{0\leq i\leq n}\{P_{\mu(i)}\circ f\}.
\end{split}
\end{equation*}
For a given $x\in\mathbb{R}$, there is $\mu\in T$ such that
\[
0_{\mathbb{T}} <P_{\mu(0)}\circ f<\dots<P_{\mu(n)}\circ f<P_{j}\circ f
\]
for $j\neq\mu(i),i = 0,1,\dots,n$. Hence we have
\[\label{eq26}
\begin{split}
\bigotimes_{j = 1}^{q}\left(\frac{\|f(x)\|}{P_{j}\circ f}\oslash\right)\leq C+\left(\max_{\mu\in T}\left(\bigotimes_{i = 0}^{n}\frac{\|f(x)\|}{P_{\mu(i)}\circ f}\oslash\right)\right).
\end{split}
\]
This implies \eqref{eq25} in terms of the definition of proximity function.\par

Therefore, combining \eqref{eq24} and \eqref{eq25}, the proof is completed.
\end{proof}

\noindent{\bf Theorem \ref{Tt3.12}.} \emph{Assume that the tropical holomorphic map $f=[f_0, f_1, \dots, f_n]: \mathbb{R}\to\mathbb{TP}^n$ is tropically linearly nondegenerate. If $H$ is a tropical hyperplane in \( \mathbb{TP}^n \) defined by the tropical polynomial
\begin{eqnarray*} P(x):=(a_0 \otimes x_0)\oplus \dots \oplus ( a_n \otimes x_n)
\end{eqnarray*} such that all $a_0, \dots, a_n \in\mathbb{R},$ then
\[
T_f(r) = N\left( r, \frac{1_{\mathbb{T}}}{P\circ f} \oslash \right) + O(1).
\]}
\par\vskip 6pt

\begin{proof} Take some particular tropical hyperplanes $H_1, \dots, H_{n+2}$ in \( \mathbb{TP}^n \) defined by the tropical polynomials
$P_1(x):=1_{\mathbb{T}}\otimes x_0=x_0, P_{2}(x):=1_{\mathbb{T}}\otimes x_1=x_1,$ $\dots,$ $P_{n+1}(x):=1_{\mathbb{T}}\otimes x_n=x_n,$ and $P_{n+2}(x):= P(x),$ respectively. This implies that $P_{j}(f(x))=f_{j}(x)$ for each $1\leq j\leq n+1$ and $P_{n+2}(f(x))=P(f(x)).$ It is obvious that
\begin{equation}\label{eq111}
H_{1}\cap \dots\cap H_{n+1}=\varnothing.
\end{equation}

We claim that \begin{equation}\label{eq33}
H_{1}\cap \dots\cap H_{i-1}\cap H_{i+1}\cap\dots \cap H_{n+2}=\varnothing.
\end{equation}
for any $0\leq i \leq n+1.$ Indeed, for these tropical hyperplanes $H_{1},$ $\dots,$ $H_{i-1},$ $H_{i+1},$ $\dots,$ $H_{n+2}$, the determinant of the matrix formed by their coefficient vectors is
\[|A_i|:=
\begin{bmatrix}
1_{\mathbb{T}} & 0_{\mathbb{T}} & \dots & 0_{\mathbb{T}} & 0_{\mathbb{T}} \\
0_{\mathbb{T}} & 1_{\mathbb{T}} & \dots & 0_{\mathbb{T}} & 0_{\mathbb{T}}\\
\vdots & \vdots & \ddots & \vdots & \vdots\\
0_{\mathbb{T}} & 0_{\mathbb{T}} & \dots & 0_{\mathbb{T}} & 1_{\mathbb{T}}\\
a_0 & a_1 & \dots & a_{n-1} & a_n
\end{bmatrix}=a_{i-1}.
\] Since all $a_0, \dots, a_n \in\mathbb{R}$, $A_i$ is  tropically nonsingular for any $0\leq i \leq n+1.$ Then by Lemma \ref{L3.4} and Lemma \ref{C2.13}, we have $H_{1}\cap \dots\cap H_{i-1}\cap H_{i+1}\cap\dots \cap H_{n+2}=\varnothing$.\par

 Combining \eqref{eq111}, \eqref{eq33} and the Definition \ref{D1.3}, we obtain that the tropical hyperplanes \(H_1, H_2, \dots, H_{n+2}\) in \(\mathbb{TP}^n\) are in general position. Thus by Theorem \ref{T4.1.2} and the first main theorem, we have
\[
\begin{split}
T_{f}(r)&\leq\sum_{j = 1}^{n+2} N(r, \frac{1_{\mathbb{T}}}{P_j\circ f} \oslash ) -\sum_{j = 0}^{n}N(r, 1_{\mathbb{T}} \oslash f_j)+ O(1)\\
&= N(r, \frac{1_{\mathbb{T}}}{P_{n+2}\circ f} \oslash )+ O(1)=N(r, \frac{1_{\mathbb{T}}}{P\circ f} \oslash)+ O(1)\leq T_{f}(r) + O(1).
\end{split}
\]
Thus, the proof is completed.
\end{proof}

To give an alternative proof of Theorem \ref{T4.1.10}, we obtain the following proposition.\par
\begin{proposition}
\label{C3.9}
Assume that the tropical holomorphic curve \(f = [f_0:\dots:f_n]:\mathbb{R}\to\mathbb{TP}^n\) is tropically linearly nondegenerate. If
$$\limsup_{r \to \infty} \frac{\log T_f(r)}{r}=0,$$
then
\[
\begin{split}
 N(r, 1_{\mathbb{T}} \oslash C(f))\leq \sum_{j = 0}^{n}N(r, 1_{\mathbb{T}} \oslash f_j)+o(T_f(r)),
\end{split}
\]
where $r$ tends to infinity outside of a set of zero upper density measure.
\end{proposition}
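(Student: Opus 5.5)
We prove the inequality by comparing $C(f)$ pointwise with $\bigotimes_{j}\overline{f}_j^{[j]}$, using the tropical Jensen formula and the shift estimate of Lemma \ref{L2.8}.

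Since all $f_j$ are tropical entire, $C(f)$ is tropical entire. By the tropical Jensen formula,
\[
N(r,1_{\mathbb{T}}\oslash C(f))=\frac12\sum_{\sigma=\pm1}C(f)(\sigma r)-C(f)(0).
\]
So it suffices to bound $C(f)(x)$ from above pointwise by shifts of the $f_j$. From the definition,
\[
C(f)(x)=\max_{\pi\in S_{n+1}}\sum_{j=0}^n f_j(x+\pi(j)c).
\]
Each term is at most $\sum_{j=0}^n \max_{0\le k\le n} f_j(x+kc)$. This bound is too lossy, since it does not see the permutation structure.

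Instead, write $C(f)=\bigotimes_{j}\overline{f}_j^{[0]}\otimes \bigl(C(f)\oslash \bigotimes_j f_j\bigr)$ and treat the quotient as a proximity term. Evaluate at $\sigma r$, sum over $\sigma$, and subtract the values at $0$:
\[
N(r,1_{\mathbb{T}}\oslash C(f))=\sum_{j=0}^n N(r,1_{\mathbb{T}}\oslash f_j)+\frac12\sum_{\sigma=\pm1}\Bigl(C(f)\oslash\bigotimes_j f_j\Bigr)(\sigma r)+O(1).
\]
The remaining task is to show $m(r, C(f)\oslash \bigotimes_j f_j)=o(T_f(r))$ outside a set of zero upper density. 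The quotient equals
\[
\max_{\pi}\sum_j \bigl(\overline{f}_j^{[\pi(j)]}\oslash f_j\bigr),
\]
so by Lemma \ref{L2.3}(i) it is bounded by
\[
\sum_j\sum_{k=0}^n m\bigl(r,\overline{f}_j^{[k]}\oslash f_j\bigr).
\]
Each $f_j$ satisfies $T(r,f_j)\le T_f(r)+O(1)$. Nondegeneracy excludes $f_j\equiv 0_{\mathbb{T}}$; otherwise $f$ would lie in the hyperplane $x_j\oplus x_j$. The Jensen computation used in the proof of Theorem \ref{T4.1.7} then gives the subnormal growth of each $f_j$. Lemma \ref{T2.7}, applied with shift $kc$, makes each such term $o(T_f(r))$.

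This naive route has a problem. Lemma \ref{T2.7} is stated for tropical meromorphic functions, and $f_j$ may have $T(r,f_j)$ small or even be linear. There the error is $o(T(r,f_j))$, which is fine since it is at most $o(T_f(r))$.

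The real subtlety is that $m(r,\overline{f}_j^{[k]}\oslash f_j)$ measures only the positive part. What we need is an upper bound on $\frac12\sum_\sigma(\cdot)(\sigma r)$, and that is at most the proximity function, so the upper bound is fine. As a cleaner alternative for the same step, $\frac12\sum_\sigma \bigl(\overline{f}_j^{[k]}-f_j\bigr)(\sigma r)$ equals
\[
N(r,1_{\mathbb{T}}\oslash \overline{f}_j^{[k]})-N(r,1_{\mathbb{T}}\oslash f_j)+O(1)
\]
by Jensen. Then bound $N(r,1_{\mathbb{T}}\oslash\overline{f}_j^{[k]})\le N(r+k|c|,1_{\mathbb{T}}\oslash f_j)$ and apply Lemma \ref{L2.8} exactly as in \eqref{eqq3.6}. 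This avoids the logarithmic derivative lemma for this term, but the maximum over permutations does not commute with the $\sigma$-averaging. So we bound the maximum over $\pi$ at each of $\pm r$ separately by the sum over all $k$ of positive parts, which brings back the proximity terms. We therefore use Lemma \ref{T2.7}.

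The main obstacle is to justify the growth hypothesis of Lemma \ref{T2.7} for each $f_j$ via $T_f$, and to merge the countably many (here finitely many) exceptional sets of zero upper density into one. A finite union of zero-upper-density sets still has zero upper density. Collecting the estimates gives the proposition.
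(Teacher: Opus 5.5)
\textbf{There is a genuine gap at the step that feeds Lemma~\ref{T2.7}.} Your reduction is fine up to the estimate $m\bigl(r,C(f)\oslash\bigotimes_j f_j\bigr)\le\sum_j\sum_k m\bigl(r,\overline{f}_j^{[k]}\oslash f_j\bigr)$. The problem is the claim $T(r,f_j)\le T_f(r)+O(1)$, which is false as written. You attribute it to the Jensen computation in the proof of Theorem~\ref{T4.1.7}, but that computation only gives $N(r,1_{\mathbb{T}}\oslash f_j)\le T_f(r)+O(1)$. In other words, it controls $\frac12\sum_\sigma f_j(\sigma r)$. By contrast, $T(r,f_j)=m(r,f_j)=\frac12\sum_\sigma f_j^+(\sigma r)$ also contains the proximity part $m(r,1_{\mathbb{T}}\oslash f_j)$, which can be of order $r$.

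Concretely, take $n=1$, $f_0(x)=Mx$ and $f_1(x)=\max\{Mx,(M+1)x\}$ with $M>1$. This curve is nondegenerate, has no common roots, and has subnormal growth. Here $\|f\|=f_1$, so $T_f(r)=r/2$, while $T(r,f_0)=Mr/2$. Note that the Jensen sum for $f_0$ is $0$. You need two things: the subnormal growth of $T(r,f_j)$, and the conversion $o(T(r,f_j))=o(T_f(r))$. Neither follows from what you wrote.

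\textbf{Repair.} The correct statement is $T(r,f_j)=O(T_f(r))$.
\begin{itemize}
\item Let $s$ be a subgradient at $0$ of the convex function $\|f\|$, so $\|f(x)\|\ge\|f(0)\|+sx$. Then $f_j^+(\pm r)\le\|f(\pm r)\|^+\le\|f(\pm r)\|+|s|r+O(1)$, hence $T(r,f_j)\le T_f(r)+|s|r+O(1)$.
\item By Jensen, $T_f(r)=N(r,1_{\mathbb{T}}\oslash\|f\|)$. Nondegeneracy forbids $\|f\|$ from being affine: otherwise each $f_i$ minus that affine function would be convex and bounded above, hence constant. So $\|f\|$ has a root, and $T_f(r)\ge cr$ for large $r$.
\end{itemize}
Alternatively, replace every $f_j$ by $f_j-sx$. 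This changes no $N(r,1_{\mathbb{T}}\oslash f_j)$, and by Lemma~\ref{LL2.3}(iv) it changes $C(f)$ only by an affine function. After this normalization, $T(r,f_j)\le T_f(r)+O(1)$ really does hold.

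\textbf{Comparison with the paper.} With this inserted, your argument (Lemma~\ref{T2.7} applied directly to the components) is a valid and somewhat shorter alternative. The paper avoids the issue by dividing by $f_0$. It writes $C(f)\oslash\bigotimes_j f_j=K+\sum_{k=0}^n(\overline{f_0}^{[k]}-f_0)$, where $K$ is built only from shifts of the quotients $f_j\oslash f_0$. Lemma~\ref{L2.6} honestly bounds the characteristic of these quotients by $T_f(r)+O(1)$. The remaining term is independent of the permutation, so it does commute with the $\sigma$-averaging. The paper handles it through counting functions via Jensen and Lemma~\ref{L2.8}, as in \eqref{eqq3.5}--\eqref{eqq3.6}. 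That design is exactly what makes the comparison with $T_f(r)$ automatic there, whereas your route needs the convexity argument above.
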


\begin{proof}
By the tropical Jensen formula, we have
\begin{equation}\label{eq3.2.6}
\begin{split}
N(r, 1_{\mathbb{T}} \oslash C(f))&-\sum_{j = 0}^{n}N(r, 1_{\mathbb{T}} \oslash f_j)\\
&=\frac{1}{2}\sum_{\sigma = \pm 1} C(f_0,\dots,f_n)(\sigma r)-\frac{1}{2}\sum_{j = 0}^{n}\sum_{\sigma = \pm 1}f_j(\sigma r)+O(1)\\
&=\frac{1}{2}\sum_{\sigma = \pm 1}\left[\frac{C(f_0,\dots,f_n)}{f_0\otimes\dots\otimes f_n}\oslash\right](\sigma r)+O(1).
\end{split}
\end{equation}
Below, we estimate the upper bound of \eqref{eq3.2.6}.
\[
\begin{split}
H:=\frac{C(f_0,\dots,f_n)}{f_0\otimes\dots\otimes f_n}\oslash=&\frac{\bigoplus(\overline{f_0}^{[\pi(0)]}\otimes\dots\otimes\overline{f_n}^{[\pi(n)]})}{f_0\otimes\dots\otimes f_n}\oslash\\
=&\left[\bigoplus\frac{\overline{f_0}^{[\pi(0)]}\otimes\dots\otimes\overline{f_n}^{[\pi(n)]}}{\overline{f_0}^{[\pi(0)]}\otimes\dots\otimes\overline{f_0}^{[\pi(n)]}}\right]\oslash\frac{f_0\otimes\dots\otimes f_n}{f_0\otimes\dots\otimes f_0}\\
&+\frac{\overline{f_0}^{[0]}\otimes\dots\otimes\overline{f_0}^{[n]}}{f_0\otimes\dots\otimes f_0}\\
:=&K+\frac{\overline{f_0}^{[0]}\otimes\dots\otimes\overline{f_0}^{[n]}}{f_0\otimes\dots\otimes f_0}.
\end{split}
\]
This is because $K$ consists purely of tropical sums and products of the form $(\overline{f_j}^{[m]}\oslash\overline{f_0}^{[m]}) \oslash ({f_j}\oslash {f_0})$ where $ m \in \{0, 1, \dots, n\}$ and $j \in \{0, \dots, n\}$. Therefore,
\begin{eqnarray}\label{eq3.2.7}
m(r,K) \leq \sum_{j=0}^n \sum_{m=0}^n m\left(r, \overline{f_{j}\oslash f_{0}}^{[m]}\oslash(f_j\oslash f_0)\right).\end{eqnarray}
We will estimate $m(r,K)$ below. Since $f_j\oslash f_0$ are tropical meromorphic functions for all $j\in\{1, 2, \dots, n\}$, by  Lemma \ref{L2.6}, we have\\
$$T_{f_j\oslash f_0}(r)\leq T_f(r)+ O(1).$$
This implies that
$$\limsup_{r\rightarrow\infty}\frac{\log T_{f_j\oslash f_0}(r)}{r}\leq\limsup_{r\rightarrow\infty}\frac{\log T_f(r)}{r}=0.$$
Hence, by Theorem \ref{T2.7}, we have
\begin{eqnarray}\label{eq3.2.8}
\sum_{j=0}^n \sum_{m=0}^n m\left(r, \overline{f_{j}\oslash f_{0}}^{[m]}\oslash(f_j\oslash f_0)\right)=o(T_f(r))
\end{eqnarray}
for $r\notin E$ with $\overline{dens}E = 0$.\par

As the same discussion as in the proof of Theorem \ref{T4.1.7}  we also have \eqref{eqq3.5}, \eqref{eqq3.6}, and then combining with \eqref{eq3.2.7} , \eqref{eq3.2.8}, we have
\[
\begin{split}
\frac{1}{2}\sum_{\sigma = \pm 1}H(\sigma r)=&\frac{1}{2}\sum_{\sigma = \pm 1}K(\sigma r)+\frac{1}{2}\sum_{\sigma = \pm 1}\frac{\overline{f_0}^{[0]}\otimes\dots\otimes\overline{f_0}^{[n]}}{f_0\otimes\dots\otimes f_0}(\sigma r)\\
\leq&\sum_{\sigma=\pm1}(K)^+(\sigma r)-\sum_{\sigma=\pm1}(-K)^+(\sigma r)+o(T_f(r))\\
=&m(r,K)-m(r,1_{\mathbb{T}} \oslash K)+o(T_f(r))\\
\leq& m(r,K)+o(T_f(r))\\
\leq&o(T_f(r)).
\end{split}
\]
Together this with \eqref{eq3.2.6}, the proof is completed.
\end{proof}

\noindent{\bf Alternative proof of Theorem \ref{T4.1.10}.} Combining Theorem \ref{T4.1.2} and Proposition \ref{C3.9}, one can immediately obtain  Theorem \ref{T4.1.10}.\qed\par

\section{Second main theorem for tropical meromorphic functions}
In this section, we will discuss the second main theorem for tropical meromorphic function. We observe that it is very interesting that for any $q$ values $a_1, \dots, a_q$ in $\mathbb{TP}^{1},$ being distinct is equivalent to being in general position in $\mathbb{TP}^{1}.$  \par

\begin{proposition}\label{P5.3}
$a_{1}, \dots, a_{q}$ are distinct values of $\mathbb{TP}^{1}$ if and only if they are in general position.
\end{proposition}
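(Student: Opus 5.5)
The plan is to reduce the statement to an elementary computation in $\mathbb{TP}^1$. Here $n=1$, so by Definition \ref{D1.3} general position means that any two of the hyperplanes (points) $H_{a_1},\dots,H_{a_q}$ attached to the coefficient vectors $a_j=[a_{j0}:a_{j1}]$ have empty intersection. I will use the characterization in Lemma \ref{C2.13}: $H_{a_1},\dots,H_{a_q}$ are in general position if and only if, for every pair $j\neq k$, the vectors $a_j,a_k$ are tropically linearly independent. By Lemma \ref{L3.4}, that is equivalent to the $2\times 2$ matrix with rows $a_j,a_k$ being tropically nonsingular. So it suffices to show:
\[
\max\{a_{j0}+a_{k1},\,a_{j1}+a_{k0}\}\ \text{is attained exactly once} \iff [a_{j0}:a_{j1}]\neq[a_{k0}:a_{k1}]\ \text{in }\mathbb{TP}^1 .
\]

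To verify this I will split into cases according to which coordinates equal $0_{\mathbb{T}}$; recall that a point of $\mathbb{TP}^1$ is not $(0_{\mathbb{T}},0_{\mathbb{T}})$.

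\textbf{Case 1: all four entries are real.} The maximum is attained twice exactly when $a_{j0}-a_{j1}=a_{k0}-a_{k1}$, which is exactly equality of the two classes (they differ by the real scalar $\lambda=a_{j0}-a_{k0}$).

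\textbf{Case 2: some entry is $0_{\mathbb{T}}$.} Suppose, say, $a_{j1}=0_{\mathbb{T}}$, so $a_j=[1_{\mathbb{T}}:0_{\mathbb{T}}]$, i.e. the point $0_{\mathbb{T}}$ of $\mathbb{TP}^1\cong\mathbb{T}\cup\{+\infty\}$. The second term $a_{j1}+a_{k0}$ is then $0_{\mathbb{T}}$.
\begin{itemize}
\item If $a_{k1}\in\mathbb{R}$, the first term $a_{j0}+a_{k1}$ is real, so the maximum is attained once. Here $a_k\neq a_j$.
\item If $a_{k1}=0_{\mathbb{T}}$, both terms equal $0_{\mathbb{T}}$, so the maximum is attained twice (the matrix is singular). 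Here $a_k=[1_{\mathbb{T}}:0_{\mathbb{T}}]=a_j$.
\end{itemize}
The case where a first coordinate is $0_{\mathbb{T}}$ is symmetric. This establishes the displayed equivalence in every case. Hence pairwise distinctness of $a_1,\dots,a_q$ is equivalent to general position, and a duplicated value produces a non-empty pairwise intersection.

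As a sanity check, I would also compute $H_a$ directly. The maximum $\max\{a_0+x_0,a_1+x_1\}$ is attained twice exactly at the single point $[-a_0:-a_1]$ when both $a_i$ are real, and at $[0_{\mathbb{T}}:1_{\mathbb{T}}]$ or $[1_{\mathbb{T}}:0_{\mathbb{T}}]$ when $a_0$ or $a_1$ is $0_{\mathbb{T}}$ respectively. So each $H_a$ is a single point and $a\mapsto H_a$ is injective, which gives the same conclusion without Lemma \ref{C2.13}.

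The only step requiring care is the bookkeeping with $0_{\mathbb{T}}$ entries. There "attained at least twice" must be read to include the case where both terms equal $0_{\mathbb{T}}$, consistent with Definition \ref{D3.2}, so that $[1_{\mathbb{T}}:0_{\mathbb{T}}]$ repeated counts as not in general position. I will state this convention explicitly.
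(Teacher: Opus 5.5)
Your proof is correct and follows the paper's route. Lemma \ref{C2.13} and Lemma \ref{L3.4} reduce general position to tropical nonsingularity of each $2\times 2$ matrix with rows $a_j,a_k$, and your case analysis shows this is equivalent to $a_j\neq a_k$; it also handles the $0_{\mathbb{T}}$ entries, which the paper passes over with ``it is easy to find'' and a quotient $a_{j1}\oslash a_{j0}$.

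There is one small slip in your sanity check, which does not affect the argument. If $a_0=0_{\mathbb{T}}$, the condition defining $H_a$ forces $x_1=0_{\mathbb{T}}$, so $H_a=\{[1_{\mathbb{T}}:0_{\mathbb{T}}]\}$; if $a_1=0_{\mathbb{T}}$, then $H_a=\{[0_{\mathbb{T}}:1_{\mathbb{T}}]\}$. Your ``respectively'' is therefore reversed, but injectivity of $a\mapsto H_a$ still holds.
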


\begin{proof} Suppose that $a_{1}, \dots, a_{q}\in \mathbb{TP}^{1}$ are distinct. Denote $a_j:=[a_{j0}: a_{j1}].$ We will prove that $a_{1}, \dots, a_q$ are in general position. If not, then by Lemma \ref{C2.13} there exists \(\{i,j\}\subset\{1,\dots,q\}\) such that $a_i, a_j$ is tropically linearly dependent. So, by Lemma \ref{L3.4}, we get that
\[
\begin{bmatrix}
a_{i0} & a_{i1} \\
a_{j0} & a_{j1}
\end{bmatrix}
\] is tropically singular, that is, its determinant $(a_{i0}\otimes a_{j1}) \oplus (a_{j0}\otimes a_{i1})$ is attained at least twice. Then it is easy to find that $a_{i}=a_{j}$,  this is a contradiction. \par

Now assume that $a_{1}, \dots, a_q$ are in general position. We will prove that $a_{1}, \dots, a_{q}\in \mathbb{TP}^{1}$ are distinct. If not, then there exists \(\{i,j\}\subset\{1,\dots,q\}\) such that $a_{i}=a_{j}$, that is, $a_{j1}\oslash a_{j0}=a_{i1}\oslash a_{i0}$, then the maximum value of $a_{j1}\otimes a_{i0}\oplus a_{i1}\otimes a_{j0}$ is attained at least twice. Hence,
\[
\begin{bmatrix}
a_{i0} & a_{i1} \\
a_{j0} & a_{j1}
\end{bmatrix}
\] is tropically singular. By Lemma \ref{L3.4}, $a_i$ and $a_j$ are tropically linearly dependent. Thus by Lemma \ref{C2.13}, this is a contradiction.
\end{proof}

By Theorem \ref{T4.1.2} we have a corollary in one dimension as follows.\par

\begin{corollary}\label{T5.3} Let $f$ be a nonconstant tropical meromorphic function, and let $a_{j}=[a_{j0}: a_{j1}]$ ($j = 1,\dots, q$) be distinct values of $\mathbb{TP}^{1}$ defining tropical polynomials $P_{j}(x)=a_{j0}\otimes x_{0}\oplus a_{j1}\otimes x_{1},$ respectively.
Then
\[
\begin{split}
(q - 2)T_{f}(r)&\leq\sum_{j = 1}^{q}N(r,\frac{1_{\mathbb{T}}}{P_{j}\circ f}\oslash)-\sum_{j = 0}^{1}N(r, 1_{\mathbb{T}} \oslash f_j)+O(1).
\end{split}
\]
\end{corollary}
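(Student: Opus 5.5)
This corollary is the case $n=1$ of Theorem \ref{T4.1.2}, once we have a suitable tropically linearly nondegenerate holomorphic curve and hyperplanes in general position. So the plan is to realize $f$ as a curve into $\mathbb{TP}^1$, check the two hypotheses, and quote Theorem \ref{T4.1.2}.

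\textbf{Step 1: the curve.} By Korhonen--Tohge [\cite{6}, Proposition 3.3], write $f=f_1\oslash f_0$ with $f_0,f_1$ tropical entire functions without common roots. Then $(f_0,f_1)$ is a reduced representation of $F=[f_0:f_1]:\mathbb{R}\to\mathbb{TP}^1$. Since $T(r,f)=T_F(r)+O(1)$ for $n=1$, we may write $T_f(r)$ for $T_F(r)$ up to $O(1)$.

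\textbf{Step 2: nondegeneracy.} By Proposition \ref{P4.1.5} it suffices to show that $f_0,f_1$ are tropically linearly independent. Suppose instead that there are $\alpha_0,\alpha_1\in\mathbb{T}$, not both $0_{\mathbb{T}}$, such that $\max\{\alpha_0+f_0,\alpha_1+f_1\}$ is attained twice everywhere.
\begin{itemize}
\item If exactly one $\alpha_i$ equals $0_{\mathbb{T}}$, the maximum has only one finite term, so it cannot be attained twice. This case is impossible.
\item Hence both $\alpha_i$ are real, and $\alpha_0+f_0\equiv\alpha_1+f_1$. Then $f=f_1-f_0\equiv\alpha_0-\alpha_1$ is constant, contradicting the hypothesis.
\end{itemize}
The only care needed is this case split on whether some $\alpha_i=0_{\mathbb{T}}$.

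\textbf{Step 3: general position.} The values $a_j$ are distinct, so by Proposition \ref{P5.3} the hyperplanes $H_j$ defined by $P_j$ are in general position in $\mathbb{TP}^1$.

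\textbf{Step 4: conclusion.} Apply Theorem \ref{T4.1.2} with $n=1$, which gives coefficient $q-2$. The resulting inequality is exactly the claim, with $N(r,1_{\mathbb{T}}\oslash f_j)$ for the components $f_0,f_1$ of the reduced representation.

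There is no real obstacle. The only subtlety is interpretive: $f_0,f_1$ in the statement must be read as the coprime entire representation of $f$ from Step 1.
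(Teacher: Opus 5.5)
Your proposal is correct and follows the paper's proof essentially step for step: you derive nondegeneracy of $[f_0:f_1]$ from the nonconstancy of $f=f_1\oslash f_0$, obtain general position of the $H_j$ from distinctness via Proposition \ref{P5.3}, and then apply Theorem \ref{T4.1.2} with $n=1$. Your explicit handling of the case where exactly one coefficient equals $0_{\mathbb{T}}$, and your use of the coprime Korhonen--Tohge representation, fill in small details that the paper passes over without changing the argument.
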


\begin{proof} First we need to prove that $f=[f_0: f_1]$ is  tropically linearly nondegenerate. Otherwise, there exist $b_0$ and $b_1$ such that
\begin{eqnarray*}\max\{b_{0}\otimes f_{0}, b_{1}\otimes f_{1}\}
\end{eqnarray*} is attained at least twice, that is, $b_{0}\otimes f_{0}=b_{1}\otimes f_{1}.$ This means $f=f_{1}\oslash f_{0}=b_{0}\oslash b_{1},$ which contradicts to the assumption that $f$ is nonconstant.\par

By Proposition \ref{P5.3},  $a_{j}$ ($j = 1,\dots, q$) are in general position. Hence, by Theorem \ref{T4.1.2} we obtain the conclusion of the corollary.
\end{proof}

For any $a=[a_1:a_0]\in \mathbb{R}\cup\{-\infty\} $, we can find that the maximum value of $P\circ f=(a_0\otimes f_0)\oplus(a_1\otimes f_1)$ is attained at least twice is equivalent to $f\oplus a$ is attained at least twice. Hence, by the tropical Jensen formula, we have
\begin{equation*}\label{eq333}
\begin{split}
N&\left(r, \frac{1_{\mathbb{T}}}{P \circ f} \oslash\right)= \frac{1}{2}\bigl(P(f)(r) + P(f)(-r)\bigr) + O(1) \\
&= \frac{1}{2}\bigl((a_0 \otimes f_0(r)) \oplus (a_1 \otimes f_1(r)) + (a_0 \otimes f_0(-r)) \oplus (a_1 \otimes f_1(-r))\bigr) + O(1) \\
&= \frac{1}{2}\bigl((a_0 \oslash a_1) \oplus (f_1(r) \oslash f_0(r)) + (a_0 \oslash a_1) \oplus (f_1(-r) \oslash f_0(-r))\bigr) \\
&\quad + \frac{1}{2}\bigl(f_0(r) + f_0(-r)\bigr) + O(1)\\
&= \frac{1}{2}\bigl(a \oplus f(r)\bigr)+\frac{1}{2}\bigl(a \oplus f(-r)\bigr)+ \frac{1}{2}\bigl(f_0(r) + f_0(-r)\bigr) + O(1)\\
&= N\left(r, \frac{1_{\mathbb{T}}}{f \oplus a} \oslash\right) - N(r, f \oplus a)+ N\left(r, \frac{1_{\mathbb{T}}}{f_0} \oslash\right)- N(r, f_0) + O(1)\\
&= N\left(r, \frac{1_{\mathbb{T}}}{f \oplus a} \oslash\right) + N(r, f) - N(r, f \oplus a) + O(1).
\end{split}
\end{equation*}

By Corollary \ref{T5.3} and the above equation, we can obtain the following second main theorem without any growth condition and exceptional set, which improves and generalizes Lain-Tohge's second main theorem \cite{5}.\par

\begin{theorem}\label{E5.4} Assume that $f$ is a nonconstant tropical meromorphic function, if $q (\geq1)$ distinct values $a_{1},\dots,a_{q}\in\mathbb{R}\cup\{-\infty\}$, then
\begin{equation*}
\begin{split}
(q - 2)T(r,f)+&N(r,f)+N(r,1_{\mathbb{T}}\oslash f)\\
&\leq\sum_{j = 1}^{q} \left(N(r, \frac{1_{\mathbb{T}}}{f \oplus a_j} \oslash) + N(r, f) - N(r, f \oplus a_j)\right)+O(1).
\end{split}
\end{equation*}
 In addition, if
\begin{equation*}
\max\{a_{1},\dots,a_{q}\}<\inf\{f(\alpha):\omega_{f}(\alpha)<0\},
\end{equation*}
then
\[
(q - 2)T(r,f)+N(r,f)+N(r,1_{\mathbb{T}}\oslash f)\leq\sum_{j = 1}^{q} N(r, \frac{1_{\mathbb{T}}}{f \oplus a_j} \oslash)+O(1).
\]
\end{theorem}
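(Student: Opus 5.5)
The plan is to deduce the theorem from Corollary \ref{T5.3} applied to the reduced representation of $f$, and then translate every term using the identity for $N(r,1_{\mathbb{T}}\oslash(P\circ f))$ displayed just before the statement.

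\textbf{Setting up.} By \cite[Proposition 3.3]{6} I write $f=f_1\oslash f_0$, where $f_0,f_1$ are tropical entire functions without common roots. Then $[f_0:f_1]$ is a tropical holomorphic curve with $T_f(r)=T(r,f)+O(1)$. To each value $a_j\in\mathbb{R}\cup\{-\infty\}$ I attach the point $[1_{\mathbb{T}}:a_j]\in\mathbb{TP}^1$, with tropical polynomial $P_j(x)=x_0\oplus(a_j\otimes x_1)$. Distinct values give distinct points, so by Proposition \ref{P5.3} the $a_j$ are in general position.

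\textbf{Translating the ramification term.} Since $\omega_f=\omega_{f_1}-\omega_{f_0}$ and $f_0,f_1$ share no roots, the roots of $f_1$ are exactly the roots of $f$ and the roots of $f_0$ are exactly the poles of $f$, with multiplicities. Hence
\[
N(r,1_{\mathbb{T}}\oslash f_0)=N(r,f),\qquad N(r,1_{\mathbb{T}}\oslash f_1)=N(r,1_{\mathbb{T}}\oslash f).
\]
For $q\ge 3$, Corollary \ref{T5.3} then gives
\[
(q-2)T(r,f)+N(r,f)+N(r,1_{\mathbb{T}}\oslash f)\le \sum_{j=1}^q N\bigl(r,\tfrac{1_{\mathbb{T}}}{P_j\circ f}\oslash\bigr)+O(1).
\]
Substituting the displayed identity
\[
N(r,1_{\mathbb{T}}\oslash(P_j\circ f))=N(r,1_{\mathbb{T}}\oslash(f\oplus a_j))+N(r,f)-N(r,f\oplus a_j)+O(1)
\]
gives the first inequality. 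I need to check that this identity also holds when $a_j=-\infty$; there $P_j\circ f=f_0$ and both sides reduce to $N(r,f)$.

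\textbf{The cases $q=1,2$.} These fall outside the injective-map framework, so I would redo the pointwise argument of (\ref{eq23}) directly. For $q=2$, the $2\times 2$ coefficient matrix is nonsingular, so its optimal permutation gives
\[
P_1\circ f+P_2\circ f\ge f_0+f_1-C.
\]
For $q=1$, use $f_0+f_1\le \|f\|+f_0\le \|f\|+P_1\circ f-C$. In both cases, averaging over $\pm r$ with the tropical Jensen formula (\ref{eq211}) and $T_f(r)=\frac12\sum_{\sigma=\pm1}\|f(\sigma r)\|+O(1)$ yields the claim.

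\textbf{The second statement.} I would show that $N(r,f\oplus a_j)=N(r,f)$ under the hypothesis $a_j<\inf\{f(\alpha):\omega_f(\alpha)<0\}$, which needs slightly more care. At each pole $\alpha$ of $f$ we have $f(\alpha)>a_j$, so $f\oplus a_j=f$ near $\alpha$ and the multiplicity is unchanged. The new corners of $\max(f,a_j)$, where $f$ crosses the level $a_j$, are convex kinks and hence roots, not poles. Inserting this equality into the first inequality cancels the terms $N(r,f)-N(r,f\oplus a_j)$ and gives the second inequality.

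I expect the main obstacle to be bookkeeping rather than analysis. It consists of making the identity for $N(r,1_{\mathbb{T}}\oslash(P\circ f))$ valid uniformly, including $a_j=-\infty$ and the constant normalisations, and of treating small $q$.
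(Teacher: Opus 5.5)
You follow the paper's route: Corollary \ref{T5.3} applied to $[f_0:f_1]$, then $N(r,1_{\mathbb{T}}\oslash f_0)=N(r,f)$ and $N(r,1_{\mathbb{T}}\oslash f_1)=N(r,1_{\mathbb{T}}\oslash f)$, then the Jensen identity displayed before the theorem. However, your polynomial is oriented the wrong way. With $P_j(x)=x_0\oplus(a_j\otimes x_1)$, the hyperplane is the point $x_1\oslash x_0=-a_j$, and $P_j\circ f=f_0\otimes\bigl(1_{\mathbb{T}}\oplus(a_j\otimes f)\bigr)$. The paper's identity is for $(a_0\otimes x_0)\oplus(a_1\otimes x_1)$ with $a=a_0\oslash a_1$, so for your $P_j$ it produces $f\oplus(-a_j)$, not $f\oplus a_j$.

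For real $a_j$ this happens to be harmless. Since $|\max(f,a)-\max(f,b)|\le|a-b|$, the quantity $N(r,1_{\mathbb{T}}\oslash(f\oplus a))-N(r,f\oplus a)$ is independent of $a\in\mathbb{R}$ up to $O(1)$. The check you propose for $a_j=-\infty$, however, is false. There $P_j\circ f=f_0$ gives $N(r,1_{\mathbb{T}}\oslash f_0)=N(r,f)$. The required term is $N(r,1_{\mathbb{T}}\oslash f)+N(r,f)-N(r,f)=N(r,1_{\mathbb{T}}\oslash f)$. The two differ by $\frac{1}{2}\sum_{\sigma=\pm1}f(\sigma r)-f(0)$, which is unbounded in general (for $f=x\oplus(-x)$ it equals $r$). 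Your $P_j$ with $a_j=-\infty$ encodes the value $+\infty$, i.e. the poles. So the substitution step fails exactly for the value $-\infty$ that the theorem explicitly allows.

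The repair is to use the coefficient vector $(a_j,1_{\mathbb{T}})$, i.e. $P_j(x)=(a_j\otimes x_0)\oplus x_1$. Then $P_j\circ f=f_0\otimes(f\oplus a_j)$ holds exactly for every $a_j\in\mathbb{R}\cup\{-\infty\}$; it equals $f_1$ when $a_j=-\infty$. The identity follows at once from the Jensen formula for $f_0$ (with $N(r,f_0)=0$) and for $f\oplus a_j$. General position still holds, since the $2\times 2$ determinants are $\max(a_i,a_k)$, each attained once.

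In your $q=1$ estimate you should then bound $f_0+f_1\le\|f\|+f_1\le\|f\|+P_1\circ f$, using $f_1$ rather than $f_0$, so that $a_1=-\infty$ is covered.

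Two smaller remarks. First, $q=2$ is not outside the framework of Theorem \ref{T4.1.2}, since injective maps $\{0,1\}\to\{1,2\}$ exist; only $q=1$ is. There your direct argument usefully covers a case the paper passes over. Second, your local argument that $N(r,f\oplus a_j)=N(r,f)$ under the second hypothesis is correct, and it supplies a step the paper leaves unstated.
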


Lastly, we recall the classical Nevanlinna's truncated second main theorem for meromorphic functions in the complex plane
\[
(q - 1)T_f(r)\leq \sum_{j = 1}^{q}N^{(1)}(r,\frac{1}{f - a_j})+N^{(1)}(r,f)+o(T_f(r))
\]
for all $r$ possibly outside a set with finite linear measure, where $a_1,\dots,a_q$ are
distinct values in $\mathbb{C}$ and $N^{(1)}(r,\frac{1}{f - a_j})$ is the counting function of zeros
of $f - a_j$ with multiplicities truncated at level one. Here, we will take an example to show that  classical Nevanlinna's truncated second main theorem may fail in the tropical theory.\par

\begin{example}\label{E5.7} Let \(f_0=(-2x+12)\oplus (3x-18), f_1=(-3x)\oplus 0 \oplus (4x-8)\). Then
\[
f(x):=f_1(x)\oslash f_0(x)=
\begin{cases}
-x-12, & x < 0, \\
2x-12, & 0\leq x\leq 2, \\
6x-20, & 2< x\leq 6, \\
x+10, & x>6.
\end{cases}
\]
Take $a_1=-12$, $a_2=-2.$ Then as $r>6$  we have
\[
\begin{split}
m(r,f)&=\frac{f^{+}(r)+f^{+}(-r)}{2}=r-1,\\
N(r,f)&=\frac{5}{2}r-15, N^{(1)}(r,f)=\frac{1}{2}r-3,\\
N^{(1)}(r,\frac{1_{\mathbb{T}}}{f\oplus a_{1}}\oslash)&=r-1, N^{(1)}(r,\frac{1_{\mathbb{T}}}{f\oplus a_{2}}\oslash)=r-4,\\
T(r,f)&=m(r,f)+N(r,f)=\frac{7}{2}r-16.
\end{split}\]Thus as $r>6$ we get the estimation
\[
T(r,f)>
 \sum_{j = 1}^{2}N^{(1)}(r,\frac{1_{\mathbb{T}}}{f \oplus a_j}\oslash)+N^{(1)}(r,f)+O(1).
\]
This implies that there is no truncated Second main Theorem in tropical setting.
\end{example}

\section{Further remarks on degeneracy}
In this brief section we note a connection between the general position condition and the degeneracy parameters introduced by Korhonen-Tohge \cite{6}. Recall that Theorem \ref{T1.2} involves the degeneracy $\lambda = ddg(\{P_{n+2}\circ f,\dots,P_q\circ f\})$, which measures how many of the remaining hyperplanes are non-complete. However, in the general position setting of Theorem \ref{T4.1.2}, the coefficients $a_{\mu(0)},\dots,a_{\mu(n)}$ are tropically linearly independent, which imposes strong constraints on the hyperplanes. A natural question is: under the general position hypothesis, what can we say about the degeneracy of the hyperplanes? \par

We define a special degeneracy $ddg^*(Q)$ under the tropically linearly independence.\par

\begin{definition}\label{DD4.9} Let $F = \{f_0,\dots,f_n\}$ be a set of tropical entire functions, tropically linearly independent, and let $Q$ be a collection of tropical linear combinations of F over $\mathbb{T}$. The \textit{degree of 1-degeneracy} of Q is defined to be
$$ddg^*(Q)=card(\{g\in Q:\ell(g)= 1\}),$$
that is, the number of elements with precisely one real coefficient in their tropical linear combinations.
\end{definition}

The following result shows that if the first $n+1$ hyperplanes are \emph{maximally $1$-degenerate} in the sense that each $P_i\circ f$ reduces to a single component (so that $\lambda^* = ddg^*(\{P_1\circ f,\dots,P_{n+1}\circ f\})=n+1$), then the remaining hyperplanes $H_{n+2},\dots,H_q$ must be complete, i.e., $\lambda=0$. Consequently, the inequality in Theorem \ref{T4.1.2} becomes an exact equality \eqref{Eq1111}, and Theorem \ref{Tt3.12} can be applied. Thus, the general position condition together with maximal $1$-degeneracy forces the rest of the hyperplanes to be complete. This observation is not needed for the main results but may be of independent interest.\par

\begin{theorem}\label{CC4.10} Assume that the tropical holomorphic curve \(f = [f_0:\dots:f_n]:\mathbb{R}\to\mathbb{TP}^n\) is tropically linearly nondegenerate. Let \(H_1,\dots,H_q\) be tropical hyperplanes in \(\mathbb{TP}^n\) in general position defined by tropical linear polynomials $P_j$ ($j = 1,\dots,q$), respectively. If $\lambda^{*} = ddg^*(\{P_1\circ f,\dots,P_{q}\circ f\}),$ then
\begin{equation}\label{Eq111}
\begin{split}
(q - n - 1 )T_{f}(r)&\leq\sum_{j = 1}^{q} N(r, \frac{1_{\mathbb{T}}}{P_j\circ f} \oslash )-\sum_{j = 0}^{n}N(r, 1_{\mathbb{T}} \oslash f_j)+ O(1) \\
&\leq (q - \lambda^{*})T_{f}(r)+ O(1).
\end{split}
\end{equation}
In particular, when $\lambda^{*}=ddg^*(\{P_{1}\circ f,\dots,P_{n+1}\circ f\})=n+1$ (and so $\lambda=ddg(\{P_{n+2}\circ f,\dots,P_{q}\circ f\})=0$), we have
\begin{equation}\label{Eq1111}
\begin{split}
(q - n - 1 )T_{f}(r)&=\sum_{j = n+2}^{q} N(r, \frac{1_{\mathbb{T}}}{P_j\circ f} \oslash )+O(1).
\end{split}
\end{equation}
\end{theorem}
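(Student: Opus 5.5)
The first inequality in \eqref{Eq111} is exactly Theorem \ref{T4.1.2}, so the plan concentrates on the upper bound and on the equality case \eqref{Eq1111}.

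\textbf{Splitting the sum.} Let $J^*\subset\{1,\dots,q\}$ be the set of indices with $\ell(P_j\circ f)=1$, so $\#J^*=\lambda^*$. For each $j\in J^*$, $P_j\circ f=a_j'\otimes f_{k(j)}$ for a single index $k(j)$ and a real constant $a_j'$. Then the tropical Jensen formula gives
\[
N(r,1_{\mathbb{T}}\oslash P_j\circ f)=N(r,1_{\mathbb{T}}\oslash f_{k(j)})+O(1).
\]
For every $j\notin J^*$ we use the first main theorem (Theorem \ref{T4.1.6}) and $m_f(r,H_j)\ge O(1)$, since the Weil function is bounded below. This gives $N(r,1_{\mathbb{T}}\oslash P_j\circ f)\le T_f(r)+O(1)$, and these indices contribute at most $(q-\lambda^*)T_f(r)$.

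\textbf{Cancelling the 1-degenerate terms.} Next we show that the map $j\mapsto k(j)$ is injective on $J^*$. Suppose two distinct indices $j,j'\in J^*$ gave the same component $f_k$. Then $P_j\circ f$ and $P_{j'}\circ f$ differ by a constant. Their coefficient vectors, restricted to the support of the functions (using that $f$ is tropically linearly independent, Proposition \ref{P4.1.5}), would make any $(n+1)$-subset containing both $j$ and $j'$ tropically singular. This contradicts general position via Lemma \ref{C2.13}, provided $q\ge n+1$; the case $q<n+1$ is vacuous or handled in the same way.

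The delicate point here is that $P_j\circ f=a'\otimes f_k$ as functions does not by itself force the coefficient vector to have only one finite entry. It does force every other term $a_{ji}+f_i$ to lie everywhere at or below $a'+f_k$. We must turn this into a singular $2\times2$ minor, and then into a singular $(n+1)\times(n+1)$ determinant, by completing with rows $x_i$-type or other hyperplanes. This is the main obstacle. I would first try to argue directly from the curve: both $P_j\circ f$ and $P_{j'}\circ f$ are attained by $f_k$, so the common maxima along the image give a tropical linear dependence among $n+1$ chosen vectors.

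Granting injectivity,
\[
\sum_{j\in J^*}N(r,1_{\mathbb{T}}\oslash P_j\circ f)\le \sum_{k=0}^nN(r,1_{\mathbb{T}}\oslash f_k)+O(1),
\]
because each counting function is nonnegative. Subtracting $\sum_k N(r,1_{\mathbb{T}}\oslash f_k)$ therefore leaves at most the non-$J^*$ part, which yields the second inequality of \eqref{Eq111}.

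\textbf{The equality case.} When $P_1\circ f,\dots,P_{n+1}\circ f$ are all 1-degenerate, injectivity makes $k$ a bijection onto $\{0,\dots,n\}$, so the first $n+1$ counting terms cancel the subtracted sum exactly up to $O(1)$. For the completeness of the remaining hyperplanes ($\lambda=0$), take $j\ge n+2$ and suppose some coefficient $a_{j,i}$ equals $0_{\mathbb{T}}$. Replace the hyperplane corresponding to $f_i$ among the first $n+1$ by $H_j$. Up to a permutation and diagonal scaling, the resulting matrix is the identity with one row replaced by $a_j$, and its determinant is $a_{j,i}=0_{\mathbb{T}}$, as computed in the proof of Theorem \ref{Tt3.12}. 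This contradicts general position, so every $P_j\circ f$ with $j\ge n+2$ is complete. Theorem \ref{Tt3.12} then gives $N(r,1_{\mathbb{T}}\oslash P_j\circ f)=T_f(r)+O(1)$ for each such $j$. Summing over the $q-n-1$ values of $j$ yields \eqref{Eq1111}.
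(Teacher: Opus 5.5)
\textbf{The gap.} The gap is the step you flag and then grant: injectivity of $j\mapsto k(j)$ on $J^*$. You read $\ell(P_j\circ f)=1$ as the functional identity $P_j\circ f=a'_j\otimes f_{k(j)}$. Under that reading injectivity cannot be derived from general position, because it is false, and the second inequality of \eqref{Eq111} fails with it.

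Take $n=1$ and $f=[\,|x|:0\,]$. This is reduced, and nondegenerate since $f_1\oslash f_0=-|x|$ is nonconstant. Let $P_1=x_0\oplus x_1$ and $P_2=(1\otimes x_0)\oplus x_1$.
\begin{itemize}
\item $H_1=\{x_1=x_0\}$ and $H_2=\{x_1=1\otimes x_0\}$ are disjoint, hence in general position.
\item Yet $P_1\circ f=|x|$ and $P_2\circ f=1\otimes|x|$. Both have $\ell=1$ with the same $k=0$, so $\lambda^*=2$.
\item We have $T_f(r)=r$, $N(r,1_{\mathbb{T}}\oslash|x|)=r$ and $N(r,1_{\mathbb{T}}\oslash f_1)=0$. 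So the middle term of \eqref{Eq111} equals $r+r-r-0=r$, whereas the right-hand side is $O(1)$.
\item Adding $P_3=x_1$ keeps general position but breaks \eqref{Eq1111} ($r$ against $0$), and also the claim $\lambda=0$.
\end{itemize}
So extracting a dependence ``from the curve'' cannot succeed. Coefficients dominated along $f(\mathbb{R})$ are invisible in $P_j\circ f$, yet they are exactly what keeps the coefficient matrix nonsingular.

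\textbf{The fix.} The statement needs the coefficient-level reading of Definition \ref{DD4.9}: $P_j$ has precisely one real coefficient, in position $k(j)$. This is what the paper's proof uses when it writes $a_i=(0_{\mathbb{T}},\dots,a_{i,i-1},\dots,0_{\mathbb{T}})$. With it, injectivity is immediate. Suppose $k(j)=k(j')$ with $j\neq j'$. In the coefficient matrix of any $n+1$ hyperplanes including $H_j$ and $H_{j'}$, both rows are supported in the single column $k(j)$. Hence every permutation term contains a factor $0_{\mathbb{T}}$, the determinant is $0_{\mathbb{T}}$, and the matrix is tropically singular, so Lemma \ref{C2.13} contradicts general position.

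This argument requires $q\ge n+1$. For $q\le n$ general position is vacuous. Since $a\otimes x_k$ always defines the hyperplane $\{x_k=0_{\mathbb{T}}\}$, injectivity there is equivalent to assuming the $H_j$, $j\in J^*$, are distinct. So that case is neither vacuous nor handled the same way.

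The same reading is what makes your equality-case matrix a permuted diagonal with one row replaced by $a_j$; without it that determinant computation is unjustified. With this correction the rest of your outline goes through and follows the paper's argument: the first main theorem for indices outside $J^*$, the cancellation, and Theorem \ref{Tt3.12} for the complete hyperplanes. Note that the paper itself only asserts the distinctness of the $k(j)$ ``without loss of generality''.
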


\begin{proof}
By Theorem \ref{T4.1.2}, the inequality $$(q - n - 1 )T_{f}(r)\leq\sum_{j = 1}^{q} N(r, \frac{1_{\mathbb{T}}}{P_j\circ f} \oslash )-\sum_{j = 0}^{n}N(r, 1_{\mathbb{T}} \oslash f_j)+ O(1)$$ is obtained. Hence, we only need to prove the case whenever $\lambda^{*} \neq 0.$ According to the definition of $ddg^{*},$  there are $\lambda^{*}$ terms in
\[
\left\{N(r, \frac{1_{\mathbb{T}}}{P_1\circ f} \oslash ), \dots,N(r, \frac{1_{\mathbb{T}}}{P_q\circ f} \oslash )\right\}
\]  that can cancel out some terms in $\{N(r, 1_{\mathbb{T}} \oslash f_0), \dots, N(r, 1_{\mathbb{T}} \oslash f_n)\}$. Without loss of generality, we assume that $P_i\circ f,$ $(1\leq i\leq\lambda^{*}),$ satisfy $\ell(P_i\circ f)= 1,$ that is,
$$P_{i}\circ f:=a_{i,i-1}\otimes f_{i-1} \quad \quad  (1\leq i\leq\lambda^{*})$$
where $a_i=(0_{\mathbb{T}},\dots,a_{i,i-1},\dots,0_{\mathbb{T}}), a_{i,i-1}\in \mathbb{R}$. By the tropical Jensen formula, we have
\begin{equation*}
\begin{split}
N(r, \frac{1_{\mathbb{T}}}{P_j\circ f} \oslash)=&\frac{1}{2}(P_i(f)(r)+P_i(f)( - r))+O(1)\\
=&\frac{1}{2}(a_{i,i-1}\otimes f_{i-1}(r)+a_{i,i-1}\otimes f_{i-1}( - r))+O(1)\\
=&\frac{1}{2}(f_{i-1}(r)+f_{i-1}(-r))+O(1)\\
=&N(r,1_{\mathbb{T}} \oslash f_{i-1})+O(1)
\end{split}
\end{equation*}for $1\leq i\leq\lambda^{*}.$ Then we have
\begin{equation*}
\begin{split}
\sum_{j = 1}^{q} N(r, \frac{1_{\mathbb{T}}}{P_j\circ f} \oslash)&-\sum_{j = 0}^{n}N(r, 1_{\mathbb{T}} \oslash f_j)\\
&=\sum_{j = \lambda^{*}+1}^{q} N(r, \frac{1_{\mathbb{T}}}{P_j\circ f} \oslash)-\sum_{j = \lambda^{*}}^{n}N(r, 1_{\mathbb{T}} \oslash f_j)+O(1)\\
&\leq(q - \lambda^{*})T_{f}(r)+O(1).
\end{split}
\end{equation*} Therefore, we get the inequality \eqref{Eq111}. In particular, whenever $\lambda^{*}=ddg^*(\{P_{1}\circ f,\dots,P_{n+1}\circ f\})=n+1,$ it is easy to see that
$$(q - n - 1 )T_{f}(r)=\sum_{j = n+2}^{q} N(r, \frac{1_{\mathbb{T}}}{P_j\circ f} \oslash )+O(1).$$\par

When $\lambda^{*}=ddg^*(\{P_{1}\circ f, \dots, P_{n+1}\circ f\})=n+1.$ Note that \(H_1, \dots, H_q\) in \(\mathbb{TP}^n\) are in general position. Without loss of generality, we set $$P_{i}\circ f:=a_{i-1}\otimes f_{i-1} \quad (1\leq i\leq\ n+1).$$ We claim $\lambda=ddg(\{P_{n+2}\circ f,\dots,P_{q}\circ f\})=0.$  Otherwise, we suppose that exist some $P_{j}\circ f$ $(j\in\{n+2, \dots,  q)$ is non-complete, that is  $$P_{j}\circ f=a_{j0}\otimes f_0\oplus\dots\oplus a_{jn}\otimes f_n$$ where at leat one of $a_{j0},\dots,a_{jn}$ is equal to $0_{\mathbb{T}}.$ We may assume that $a_{jn}=0_{\mathbb{T}}.$ Then for tropical hyperplanes $H_{1}, \dots, H_{n}$ and the $H_{j}$, the determinant of the matrix formed by their coefficient vectors is
\[|A_{n+1}|:=
\begin{bmatrix}
a_0 & 0_{\mathbb{T}} & \dots & 0_{\mathbb{T}} & 0_{\mathbb{T}} \\
0_{\mathbb{T}} & a_1 & \dots & 0_{\mathbb{T}} & 0_{\mathbb{T}}\\
\vdots & \vdots & \ddots & \vdots & \vdots\\
0_{\mathbb{T}} & 0_{\mathbb{T}} & \dots & a_{n-1} & 0_{\mathbb{T}}\\
a_{j0} & a_{j1} & \dots & a_{j,n-1}& 0_{\mathbb{T}}
\end{bmatrix}=0_{\mathbb{T}}.\] Hence, $A_{n+1}$ is  tropically singular, by Lemma \ref{L3.4} and Lemma \ref{C2.13}, we deduce that $H_{1}\cap \dots\cap H_{n}\cap H_{j}\neq\varnothing.$  This contradicts with \(H_1,\dots,H_q\) in \(\mathbb{TP}^n\) be in general position. Therefore, we obtain $\lambda=ddg(\{P_{n+2}\circ f,\dots,P_{q}\circ f\})=0.$
\end{proof}

\begin{remark} From the proof of Theorem \ref{CC4.10}, we know that if $\lambda^{*}=ddg^*(\{P_{1}\circ f,\dots,P_{n+1}\circ f\})=n+1,$ then $\lambda=ddg(\{P_{n+2}\circ f,\dots,P_{q}\circ f\})=0.$ This means that all the hyperplanes $H_{n+2}, \dots, H_q$ are complete, i. e. all coefficients of the hyperplanes are real numbers. Therefore the conclusion \eqref{Eq1111} can be obtained immediately by Theorem \ref{Tt3.12}.
\end{remark}

\section{Conclusion}
This paper refines and completes the tropical Cartan theory for hyperplanes in general position initiated by Korhonen-Tohge \cite{6} and Cao-Zheng \cite{7}. We have proved two second main theorems.  The first (Theorem \ref{T4.1.10}) achieves the optimal coefficient $q-n-1$ under the subnormal growth condition, using a simpler Casorati term. The second (Theorem \ref{T4.1.2}) is the first result in tropical Nevanlinna theory that requires no growth condition and no exceptional set; it relies on a tropical Cramer theorem and completely avoids the logarithmic derivative lemma. Together, these results demonstrate that the optimal geometric content of general position is fully captured in the tropical setting, and that analytic growth restrictions are not intrinsic to the subject.\par

\end{document}